\newtheorem{thm}{Theorem}[section]
\newtheorem{cor}[thm]{Corollary}
\newtheorem{lem}[thm]{Lemma}
\newtheorem{prop}[thm]{Proposition}
\theoremstyle{definition}
\theoremstyle{remark}
\numberwithin{equation}{section}
\theoremstyle{remark}
\newtheorem{exam}[thm]{Example}
\newcommand{\mbb}{\mathbb}
\newcommand{\ra}{\rightarrow}
\newcommand{\z}{\zeta}
\newcommand{\pa}{\partial}
\newcommand{\ov}{\overline}
\newcommand{\ep}{\epsilon}
\newcommand{\no}{\noindent}
\newcommand{\al}{\alpha}
\newcommand{\Om}{\Omega}
\newcommand{\cal}{\mathcal}
\newcommand{\ti}{\tilde}
\newcommand{\la}{\lambda}
\newcommand{\abs}[1]{\left\vert#1\right\vert}
\newcommand{\de}{\delta}
\newcommand{\La}{\Lambda}
\newcommand{\ga}{\gamma}
\newcommand{\Ga}{\Gamma}
\newcommand{\be}{\beta}
\begin{document}
\title{Comments on the Green's function of a planar domain}
\thanks{The first named author was supported by the DST-INSPIRE grant IFA-13 MA-21.
The last named author was supported by the DST Swarnajayanti
Fellowship 2009--2010 and a UGC--CAS Grant}
\author{Diganta Borah, Pranav Haridas and Kaushal Verma}

\address{Diganta Borah: Indian Institute of Science Education and Research, Pune, India}
\email{dborah@iiserpune.ac.in}

\address{Pranav Haridas: Department of Mathematics, Indian Institute of Science, Bangalore 560 012, India}
\email{pranav10@math.iisc.ernet.in}

\address{Kaushal Verma: Department of Mathematics, Indian Institute of Science, Bangalore 560 012, India}
\email{kverma@math.iisc.ernet.in}

\begin{abstract}
We study several quantities associated to the Green's function of a multiply connected domain in the complex plane. Among them are some intrinsic properties such as geodesics, curvature, and $L^2$-cohomology of the capacity metric and critical points of the Green's function. The principal idea used is an affine scaling of the domain that furnishes quantitative boundary behaviour of the Green's function and related objects.
\end{abstract}

\maketitle

\section{Introduction}

\no Let $D \subset \mbb C$ be a regular domain. Recall that the Green's function
$G_D(z,p)$ of $D$ with pole at $p \in D$ is defined by
\[
G_D(z,p) = -\log \vert z - p \vert + H_D(z,p)
\]
where $H_D(z,p)$ is the unique harmonic function of $z$ in $D$ with boundary values
$\log \vert z - p \vert$. The existence of $H_D(z,p)$ is guaranteed by the
solvability of the Dirichlet problem on regular domains and the uniqueness is a
consequence of the maximum principle for harmonic functions. Thus $G_D(z,p)$ is
the unique function satisfying the following properties: it is harmonic on $D \setminus \{p\}$,
$G_D(z,p) \to 0$ as $z \to \pa D$ and $G_D(z,p)+\log \vert z - p \vert$ is harmonic near
$p$. It is well known that $G_D(z,p)$ is symmetric in $z, p$
and hence $H_D(z,p)$ inherits the same property. Therefore, $H_D(z,p)$ is harmonic in
both $z, p \in D$. Being separately harmonic implies that $H_D(z,p)$ is harmonic
on $D \times D$ -- see for instance \cite{L}, \cite{H}. The function $H_D(z,p)$ is therefore
the regular part of $G(z,p)$.

\medskip

The purpose of this paper is to study several intrinsic quantities, all of whom
owe their existence to $G_D(z, p)$. The reader is referred to the recent work of Gustafsson--Sebbar \cite{GS} that also
touches upon some of these themes among many others. To elaborate on the
first of these, note that in a neighborhood of a given $p \in D$, $H_D(z, p)$ is the real part of a holomorphic
function $h_D(z, p)$ (of $z$) which is uniquely determined by choosing the imaginary part $\Im h_D(p, p) = 0$. Let
\[
h_D(z, p) = c_0(p) + c_1(p)(z-p) + \ldots + c_n(p)(z-p)^2 + \ldots,
\]
near $p$. Note that
\[
\La_D(p)=\lim_{z \to p} \big( G_D(z,p) + \log\vert z - p \vert \big)
\]
exists and this is the \textit{Robin constant} for $D$ at $p$. In other words, $\La_D(p) = H_D(p,p) = c_0(p)$.
This implies that $\La_D(p)$, and hence $c_0(p)$, are real analytic on $D$. The correspondence $p \mapsto \La_D(p)$ is the
Robin function for $D$. The constant $c_D(p) = e^{-\La_D(p)}$ is the \textit{capacity constant} for $D$ at $p$ and the
correspondence $p \mapsto c_D(p)$ will be referred to as the capacity function for $D$. Under a conformal map $\phi : D \to \Om$, the
invariance of the Green's function implies that
\begin{equation}\label{trf}
\begin{split}
\La_{D}(p) & =\La_{\Om}\big(\phi(p)\big)-\log\big\vert \phi^{\prime}(p)\big\vert, \;\text{and}\\
c_{D}(p) & =c_{\Om}\big(\phi(p)\big) \big\vert \phi^{\prime}(p) \big\vert.
\end{split}
\end{equation}
The first of these can be regarded as a transformation rule for $c_0(p)$ since $\La_D(p) = c_0(p)$ while the second one shows that $c_D(z) \vert dz \vert$ is a conformal metric
-- the capacity metric on $D$. Various differential geometric aspects of this metric depend on understanding the first few coefficients $c_i(p)$ in the expansion of $h_D(z, p)$.
As examples, the curvature depends on the second derivative of $c_0(p)$ while the associated Levi-Civita connection is given by the derivative of $c_1(p)$. Furthermore, it was observed
in \cite{GS} that a suitable combination of $c_1(p)$ and $c_2(p)$ transforms as a projective connection.

\medskip

\begin{exam}
For the unit disc $\mathbb D$ and $p \in \mathbb D$,
\[
G_{\mathbb D}(z,p)= -\log \vert z - p \vert + \log \vert 1 - \ov p z \vert,
\]
and therefore
\[
\La_{\mathbb D}(p)= \log\big(1 - \vert p \vert^2\big).
\]
Hence
\[
c_{\mathbb D}(p)= \big(1-\vert p \vert^2\big)^{-1},
\]
which means that the capacity metric coincides with the hyperbolic metric. Also, $h_{\mathbb{D}}(z,p)=\log(1-\ov p z)$, and so for $n \geq 1$, the coefficients
\[
c_{n,\mbb{D}}(p)=\frac{1}{n!} \frac{\pa^n}{\pa z^n} \log (1-\ov p z)\Big\vert_{z=p}=-\frac{\ov p^n}{n\big(1-\vert p \vert^2\big)^n}.
\]
\end{exam}

\begin{exam}\label{ex-half}
Consider the half plane
\[
\cal H=\left\{z \in \mbb{C} : 2 \Re (az) + k <0 \right\},
\]
where $a$ is a nonzero complex number and $k$ is a real constant. Then
\[
G_{\cal H}(z,p)=-\log \vert z - p \vert + \log \vert z - p^{*} \vert,
\]
where
\[
p^{*}=p-\frac{2\Re(ap)+k}{a},
\]
is the symmetric point of $p$ with respect to the boundary $\pa \cal H$. Thus
\begin{equation}\label{La-hplane}
\La_{\cal H}(p)=\log \vert p - p^{*} \vert=\log \big\vert 2\Re(ap)+k \big\vert -\log \vert a \vert,
\end{equation}
and
\begin{equation}\label{cap-hplane}
c_{\cal H}(p)=\frac{\vert a \vert}{\big\vert 2 \Re(ap)+k \big\vert}.
\end{equation}
Also, $h_{\mathcal{H}}(z,p)=\log(z-p^{*})-i\arg(p-p^{*})$, where $\arg$ is the principal argument and so for $n\geq1$, the coefficients
\begin{equation}\label{c_n-hplane}
c_{n,\cal H}(p)=\frac{1}{n!}\frac{\pa^n}{\pa z^n} \log(z-p^{*}) \Big\vert_{z=p}=\frac{(-1)^{n-1} a^n}{n \big(2\Re(a)+k\big)^n}.
\end{equation}
\end{exam}
In both examples it can be seen that the capacity function and the coefficients blow up near the boundaries at a rate
which is of the order of some power of the reciprocal of the distance to the boundary. The following theorem shows that this holds in general and
at the same time generalizes an observation regarding this made in \cite{GS}--see Lemma 5.3 therein.

\begin{thm}\label{c_D,c_n-asymp}
Let $D \subset \mbb C$ be a regular domain with a $C^2$-smooth
open piece $\Ga \subset \pa D$. Let $p_0 \in \Ga$ and let $\psi$ be a
$C^2$-smooth local defining function for $D$ near $p_0$, i.e., $U \cap D = \{ \psi < 0\}$ for some neighborhood $U$ of $p_0$ and $d\psi \not= 0$ on $\Gamma$. Then, as $p \to p_0$:
\begin{enumerate}
\item[(i)] $\La_D(p) - \log\big(-\psi(p)\big) \to -\log \big\vert \pa \psi(p_0) \big\vert$. Furthermore, for all non-negative integers $\al, \be$ such that $(\al, \be) \not= (0, 0)$
\[
\pa^{\al+ \be}\La(p) \big(-\psi(p)\big)^{\al+\be} \to
-(\al+\be-1)!\big(\pa \psi(p_0)\big)^{\al} \big(\ov \pa \psi(p_0) \big)^{\be}.
\]

\item[(ii)] For all $n \geq 1$ and nonnegative integers $\al, \be$
\[
\pa^{\al+\be}c_{n, D}(p)\big(-\psi(p)\big)^{n+\al+\be} \to -\frac{(n+\al+\be-1)!}{n!}\big(\pa
\psi(p_0)\big)^{n+\al}\big(\ov \pa \psi(p_0) \big)^{\be}.
\]
\end{enumerate}
\end{thm}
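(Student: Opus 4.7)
The proof rests on an affine scaling of $D$ centered at $p$. Set $\delta(p) = -\psi(p) > 0$ and $\phi_p(z) = (z - p)/\delta(p)$, a conformal (affine) map sending $p$ to $0$. A Taylor expansion of $\psi$ about $p$ (using $\psi \in C^2$) shows that the rescaled domains $D_p := \phi_p(D)$ converge, as $p \to p_0$, to the half-plane
\[
\cal H_{p_0} := \{w \in \mbb C : 2 \Re(\pa \psi(p_0) \, w) < 1\},
\]
in the sense that every compact subset of $\cal H_{p_0}$ eventually lies in $D_p$. This is an instance of Example \ref{ex-half} with $a = \pa \psi(p_0)$ and $k = -1$.

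Conformal invariance of the Green's function, applied to the affine $\phi_p$, gives $h_D(z, p) = -\log \delta(p) + h_{D_p}(\phi_p(z), 0)$, and matching Taylor coefficients in $z - p$ yields the scaling identities
\[
\La_D(p) - \log(-\psi(p)) = \La_{D_p}(0), \qquad c_{n, D}(p) \, (-\psi(p))^n = c_{n, D_p}(0), \quad n \geq 1.
\]
Stability of the Green's function under the above convergence of regular domains gives $G_{D_p} \to G_{\cal H_{p_0}}$ locally uniformly on $\cal H_{p_0} \times \cal H_{p_0}$, and elliptic regularity for harmonic functions upgrades this to $C^\infty$-convergence of the regular parts on compacta. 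Evaluating at $w = 0$ then gives $\La_{D_p}(0) \to -\log |\pa \psi(p_0)|$ and $c_{n, D_p}(0) \to c_{n, \cal H_{p_0}}(0) = -(\pa \psi(p_0))^n / n$, disposing of both parts when $\al = \be = 0$.

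For $\al + \be \geq 1$ the plan is induction on $\al + \be$. The observation $\pa_p \La_D(p) = c_{1, D}(p)$ -- immediate from $\La_D(p) = H_D(p, p)$, the symmetry $H_D(z, p) = H_D(p, z)$, and $2 \pa_z H_D|_{z = p} = c_{1, D}$ -- reduces (i) with $\al + \be \geq 1$ to (ii) at $n = 1$. For (ii), differentiating the expansion $h_D(z, p) = \sum_k c_{k, D}(p)(z - p)^k$ in $p$ and in $\ov p$ yields
\[
\pa_p c_{n, D}(p) = (n + 1)\, c_{n + 1, D}(p) + b_{n, D}(p), \qquad \ov \pa_p c_{n, D}(p) = \tfrac{1}{n!}\pa_z^n [\ov \pa_p h_D(z, p)]\big|_{z = p},
\]
with $b_{n, D}(p) = \tfrac{1}{n!} \pa_z^n [\pa_p h_D(z, p)]\big|_{z = p}$. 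Tracking $\pa_p h_D$ and $\ov \pa_p h_D$ through the scaling $\phi_p$ produces
\[
b_{n, D}(p)(-\psi(p))^{n + 1} = \tfrac{1}{n!}\pa_w^n \pa_q h_{D_p}(w, q)\big|_{(0, 0)}, \qquad \ov \pa_p c_{n, D}(p)(-\psi(p))^{n + 1} = \tfrac{1}{n!}\pa_w^n \ov \pa_q h_{D_p}(w, q)\big|_{(0, 0)}.
\]
On $\cal H_{p_0}$, the reflected point $p^*$ is antiholomorphic in $p$ (Example \ref{ex-half}), so $\pa_p h_{\cal H_{p_0}} \equiv 0$ and $b_{n, \cal H_{p_0}} \equiv 0$; the mixed partial $\ov \pa_q h_{\cal H_{p_0}}$ is explicit and gives exactly the claimed limit. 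The $C^\infty$-convergence of $h_{D_p}$ therefore yields the cases $(\al, \be) = (1, 0)$ and $(0, 1)$ of (ii). Iterating, and applying the analogous scaling argument to $b_n$ (which vanishes on $\cal H_{p_0}$ together with all derivatives), produces the general $(\al, \be)$.

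The principal obstacle is establishing the $C^\infty$-convergence $h_{D_p} \to h_{\cal H_{p_0}}$ in both variables uniformly in $p$, and propagating this through successive derivatives at each induction step. Via Cauchy's integral formula for Taylor coefficients this reduces to locally uniform convergence of $G_{D_p}$ on compact subsets of $\cal H_{p_0} \times \cal H_{p_0}$, which follows from Perron's construction together with the $C^2$-regularity of $\pa D$ near $p_0$.
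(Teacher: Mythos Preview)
Your core idea — affine rescaling $\phi_p(z)=(z-p)/(-\psi(p))$ and convergence of the rescaled Green's data to the half-plane model — is exactly the paper's strategy. Two points deserve comment.

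\medskip

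\textbf{The Green's function convergence is not free.} Your sentence ``every compact subset of $\cal H_{p_0}$ eventually lies in $D_p$'' gives only one-sided containment; the scaled domains $D_p$ have diameter $\asymp \delta(p)^{-1}\to\infty$, so nothing a priori prevents $G_{D_p}$ from being much larger than $G_{\cal H_{p_0}}$. Invoking ``Perron's construction together with the $C^2$-regularity'' does not bridge this gap. The paper handles this by a localization lemma: for a small simply connected neighbourhood $U$ of $p_0$ one has $0<G_D-G_{U\cap D}\lesssim \delta(z)$ on $U\cap D$, so after scaling $G_{D_p}-G_{\phi_p(U\cap D)}\to 0$ on compacta, and the simply connected pieces $\phi_p(U\cap D)$ are handled via normal-family arguments for their Riemann maps. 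Some such two-sided control is genuinely needed.

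\medskip

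\textbf{The induction is unnecessary.} Your recursive scheme $\pa_p c_{n,D}=(n+1)c_{n+1,D}+b_{n,D}$ together with iteration on the $b_n$'s works, but it is a detour. The point you may be missing is that for \emph{each fixed} $p_j$ the map $\phi_{p_j}$ is affine, so the chain rule is trivial and one gets directly
\[
\pa^{\al+\be}\La_D(p_j)\,(-\psi(p_j))^{\al+\be}=\pa^{\al+\be}\La_{D_{p_j}}(0),\qquad
\pa^{\al+\be}c_{n,D}(p_j)\,(-\psi(p_j))^{n+\al+\be}=\pa^{\al+\be}c_{n,D_{p_j}}(0),
\]
with no recursion. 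The right-hand sides involve derivatives of $\La_{D_{p_j}}$ and $c_{n,D_{p_j}}$ in their own variable on the \emph{fixed} domain $D_{p_j}$; these are read off from a double Poisson integral representation of $H_{D_{p_j}}$ over a small circle, and converge because $H_{D_{p_j}}\to H_{\cal H_{p_0}}$ uniformly on compact sets of $\cal H_{p_0}\times\cal H_{p_0}$. This replaces your entire inductive step by a single differentiation under the integral sign.
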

Here and henceforth, we will follow the standard convention of denoting complex partial derivatives by powers of $\pa$ and $\ov \pa$:
\[
\pa^\al=\frac{\pa}{\pa z^{\al}}, \quad \ov \pa^{\be}=\frac{\pa}{\pa \ov z^{\be}}, \quad \pa^{\al+\be}=\pa^{\al}\ov\pa^{\be}=\frac{\pa^{\al+\be}}{\pa z^{\al}\pa \ov z^{\be}}.
\]
Since $c_D(p) = e^{-\La_D(p)}$, the statements in (i) above can be translated to give the boundary asymptotics of $c_D$ and {\it all} of its derivatives. As an example, it follows that
\begin{equation}
c_D(p)\big(-\psi(p)\big) \to \big\vert \pa \psi(p_0) \big\vert
\end{equation}
as $p \to p_0$. Consequently, $c_D(z)$ blows up at the rate of $\big(-\psi(z)\big)^{-1}$ (which is the same as the reciprocal of the distance of $z$ to $\pa D$ by the smoothness of $\psi$) near the boundary.  In the neighborhood $U$ of $p_0$, the hyperbolic metric on $U \cap D$ has the same rate of blow up near $U \cap \pa D$. Thus these metrics are asymptotically the same in $U \cap D$ and this naturally leads to a comparison of their various geometric aspects. To start with, recall that the capacity metric is in general distance decreasing under holomorphic mappings, a property enjoyed by the hyperbolic metric as well. A theorem of Minda \cite{Mi1} shows that on a hyperbolic Riemann surface, the capacity metric is dominated by the hyperbolic metric and that equality at a single point forces the Riemann surface to be simply connected. Since $D$ is assumed to be an arbitrary regular domain, these metrics do not coincide anywhere on it. A qualitative description of the geodesics for the capacity metric on the standard annulus $A = \big\{r < \vert z \vert < 1\big\}$ for $0 < r < 1$ is also available in \cite{Ab} (which relies on a more general result of Herbort \cite{H} that applies to conformal metrics of a specific form on $A$) and finally we note Blocki's affirmative solution \cite{Bl} of Suita's conjecture (which asked whether $c_D^2(z) \le \pi K_D(z)$ where $K_D(z)$ is the Bergman kernel on the diagonal; see \cite{Su}). We will focus on the boundary behavior of the curvature and geodesics among other invariants attached to this conformal metric. Recall that the curvature of $c_D(z) \vert dz \vert$ is given by
\[
\mathcal K(z) = -4c_D^{-2}(z) \;\pa \ov \pa \log c_D(z)
\]
and by \cite{Bl}, \cite{Su} it follows that $\mathcal K \leq -4$ everywhere on a bounded domain $D$ and furthermore, if $\pa D$ is sufficiently smooth then $\mathcal K(z) \to -4$ as $z$ approaches the boundary $\pa D$ -- see \cite{Su} for further pertinent remarks that formed the genesis of this conjecture. That the boundary behavior of $\mathcal K$ is also a consequence of Theorem 1.3 is shown in the following, which incidentally emphasizes the local nature of this phenomenon:

\begin{prop}
Let $D$, $\Ga$ and $p_0$ be as in Theorem 1.3. Then $\mathcal K(p) \to -4$ as $p \to p_0$.
\end{prop}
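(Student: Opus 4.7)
The plan is to rewrite the curvature in terms of $\Lambda_D$ and then read off the two limits directly from Theorem \ref{c_D,c_n-asymp}(i). Since $c_D(p)=e^{-\La_D(p)}$, we have $\log c_D(p)=-\La_D(p)$, so
\[
\mathcal K(p) = -4 c_D^{-2}(p)\,\pa\ov\pa \log c_D(p) = 4 c_D^{-2}(p)\,\pa\ov\pa \La_D(p).
\]
The goal is therefore to show that $c_D^{-2}(p)\,\pa\ov\pa \La_D(p)\to -1$ as $p\to p_0$.

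First I would apply Theorem \ref{c_D,c_n-asymp}(i) with $(\al,\be)=(1,1)$ to obtain
\[
\pa\ov\pa \La_D(p)\,\big(-\psi(p)\big)^{2} \longrightarrow -\big\vert \pa\psi(p_0)\big\vert^{2}.
\]
Next, the case $(\al,\be)=(0,0)$ of the same statement gives $\La_D(p)-\log(-\psi(p))\to -\log\vert\pa\psi(p_0)\vert$, which, after exponentiating, is exactly $c_D(p)\big(-\psi(p)\big)\to \vert \pa\psi(p_0)\vert$, and hence $c_D^{2}(p)\big(-\psi(p)\big)^{2}\to \vert \pa\psi(p_0)\vert^{2}$. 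Since $\pa\psi(p_0)\neq 0$ by the non-degeneracy assumption $d\psi\neq 0$ on $\Ga$, this limit is a nonzero positive number, so dividing the two limits is legitimate:
\[
c_D^{-2}(p)\,\pa\ov\pa \La_D(p) = \frac{\pa\ov\pa \La_D(p)\,\big(-\psi(p)\big)^{2}}{c_D^{2}(p)\,\big(-\psi(p)\big)^{2}} \longrightarrow \frac{-\vert\pa\psi(p_0)\vert^{2}}{\vert\pa\psi(p_0)\vert^{2}} = -1.
\]
Multiplying by $4$ yields $\mathcal K(p)\to -4$.

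In this approach there is essentially no obstacle beyond invoking Theorem \ref{c_D,c_n-asymp}; the only thing to be careful about is that the factors of $(-\psi(p))^{2}$ in numerator and denominator cancel cleanly, which is why the normalization by $(-\psi)^{\al+\be}$ in the theorem is exactly matched to the scaling of the curvature expression (the metric factor $c_D^{-2}$ contributes $(-\psi)^{2}$ while the second derivative $\pa\ov\pa\La_D$ consumes $(-\psi)^{-2}$). It is also worth emphasizing, as the statement of the proposition does, that $\Ga$ is only required to be a local $C^{2}$ piece of $\pa D$: the entire argument takes place in the neighborhood $U$ of $p_0$, and this is what makes the boundary behavior of $\mathcal K$ a local phenomenon.
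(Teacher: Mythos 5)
Your proposal is correct and follows exactly the paper's argument: combine the $(\al,\be)=(1,1)$ case of Theorem \ref{c_D,c_n-asymp}(i), giving $\pa\ov\pa\La_D(p)\big(-\psi(p)\big)^2\to-\vert\pa\psi(p_0)\vert^2$, with the limit $c_D^2(p)\big(-\psi(p)\big)^2\to\vert\pa\psi(p_0)\vert^2$ from the $(0,0)$ case, and take the quotient. No issues.
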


Next, we study the global behavior of geodesics in the capacity metric and in what follows, $D$ will be assumed to have connectivity at least $2$ and $C^{\infty}$-smooth boundary everywhere. Any reduction in the smoothness of the boundary does not lead to any further generality since it is known that a planar domain whose boundary consists of finitely many continua is conformally equivalent to one that has $C^{\infty}$-smooth boundary. Let $\psi$ be a $C^{\infty}$-smooth global defining function for $\pa D$. Note that the capacity metric is complete on such a domain $D$ since it is uniformly comparable with the hyperbolic metric near $\pa D$. A direct consequence of this is that every nontrivial homotopy class of loops in $D$ contains a closed geodesic in the capacity metric -- this follows from Theorem 1.1 in \cite{He}. On the other hand, non-closed geodesics can either diverge to the boundary $\pa D$ as $t \to \pm \infty$ or exhibit spiral--like behavior. To make this precise in this case, a smooth path $z : \mathbb R \to D$ is a geodesic spiral if it is a non-closed geodesic for the capacity metric that lies in a compact subset $K \subset D$ for all time $t \geq 0$. 

\medskip

The differential equation for geodesics in the capacity metric $ds = c_D(z) \vert dz \vert$ takes the form
\[
z''(t) = \pa \La_D\big(z(t)\big)\big(z'(t)\big)^2.
\]
Suppose that $z(t)$ diverges to the boundary as $t \to +\infty$. To analyse this case, a mixture of two inputs are used -- one, calculations similar to those of Fefferman (\cite{Fe}) for the Bergman metric on strongly pseudoconvex domains show that $z(t)$ approaches the boundary $\pa D$ at an exponential decaying rate and two, we interpret Theorem 1.3 (i) as saying that the capacity metric on a smoothly bounded domain is Gromov hyperbolic since it is comparable with the hyperbolic metric. Thus $z(t)$ can be thought of as a quasigeodesic for the hyperbolic metric on $D$. By using well known estimates for the shape of the hyperbolic balls on $D$, it is possible to show that $z(t)$ converges to a unique point on $\pa D$. The other case when $z(t)$ spirals can be dealt with by using the estimates from Theorem 1.3 and some arguments from \cite{He}. All this can be summarized as follows:

\begin{thm}
Let $D \subset \mbb C$ be a non simply connected, smoothly bounded domain equipped with the capacity metric. Then
\begin{enumerate}
\item[(i)] Every nontrivial homotopy class of loops in $D$ contains a closed geodesic.

\item[(ii)] Every geodesic $z(t)$ that does not stay in a compact set of $D$ for all time $t \geq 0$ hits the boundary $\pa D$ at a unique point.

\item[(iii)] For every $z_0 \in D$ that does not lie on a closed geodesic, there exists a geodesic spiral passing through $z_0$.
\end{enumerate}
\end{thm}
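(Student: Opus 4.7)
The proof is organized by part, following the sketch preceding the statement.

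For (i), the key observation is that the capacity metric on $D$ is complete: by Theorem 1.3(i), $c_D(z)\sim \vert\pa\psi(p_0)\vert/(-\psi(z))$ as $z\to p_0\in\pa D$, so $c_D$ blows up at the same rate as the density of the (complete) hyperbolic metric, which forces the capacity-length of any path leaving every compact subset of $D$ to be infinite. Combined with the curvature bound $\cal K\le -4$ recalled above, the hypotheses of Theorem 1.1 of \cite{He} are satisfied, producing a length-minimizing closed geodesic in every nontrivial free homotopy class of loops on $D$.

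For (ii), parametrize $z(t)$ by unit speed so that $c_D(z(t))\vert z'(t)\vert=1$ and take the equation in the form
\[
z''(t)=\pa\La_D(z(t))(z'(t))^2.
\]
The first step is to strengthen the hypothesis ``$z(t)$ leaves every compact subset of $D$'' to ``$z(t)\to\pa D$''. Using the $\al=1,\be=0$ case of Theorem 1.3(i), namely $\pa\La_D(z)(-\psi(z))\to -\pa\psi(p_0)$, together with the unit-speed condition rewritten as $\vert z'(t)\vert\sim (-\psi(z(t)))/\vert\pa\psi(p_0)\vert$, I would derive a second-order differential inequality for $\rho(t):=-\psi(z(t))$ in boundary-adapted coordinates. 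This yields a trapping phenomenon analogous to Fefferman's analysis \cite{Fe} of Bergman geodesics: once $z(t)$ enters a small enough neighborhood of a point of $\pa D$, $\rho(t)$ is forced to decay exponentially. With the exponential decay secured, Theorem 1.3(i) gives a uniform global comparison between the capacity and hyperbolic metrics on $D$, so $z(t)$ is a quasi-geodesic ray for the hyperbolic metric (after a bounded speed reparametrization). Gromov hyperbolicity of the hyperbolic metric and the standard control on the shape of hyperbolic balls near smooth boundary points of a planar domain then force such a quasi-geodesic to converge to a unique point of $\pa D$.

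For (iii), assume $z_0\in D$ does not lie on any closed geodesic. Let $S_{z_0}$ denote the unit tangent circle at $z_0$ in the capacity metric, and for $v\in S_{z_0}$ let $z_v(t)$ be the unit-speed geodesic with $z_v(0)=z_0$ and $z_v'(0)=v$. Let $E\subset S_{z_0}$ be the set of $v$ for which $z_v([0,\infty))$ escapes every compact subset of $D$. If $E$ is a proper subset of $S_{z_0}$, then any $v\notin E$ gives a non-closed geodesic through $z_0$ confined to a compact subset of $D$, which is the required geodesic spiral. The case $E=S_{z_0}$ must be excluded: by (ii), it defines a continuous map $v\mapsto p(v)\in\pa D$, and combining this map with the divergence rates for nearby geodesics supplied by Theorem 1.3 and the closing arguments of \cite{He} produces a closed geodesic through $z_0$, contradicting the choice of $z_0$.

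The principal obstacle is the Fefferman-type exponential decay estimate at the heart of (ii): showing both that geodesics get trapped near $\pa D$ once they come sufficiently close and that $\rho(t)$ then decays at the sharp exponential rate requires combining the asymptotics of Theorem 1.3 with a careful ODE analysis in boundary coordinates. Once this is in place, the convergence to a unique boundary point in (ii) is a standard Gromov-hyperbolic argument, and part (iii) is a limiting construction in the style of \cite{He}.
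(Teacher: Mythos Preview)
Your outline for parts (i) and (ii) matches the paper's approach closely: part (i) is a direct application of Herbort's theorem once the blow-up of $c_D$ near $\partial D$ is verified, and part (ii) proceeds exactly as you describe, first establishing exponential decay of $-\psi\big(z(t)\big)$ via a Fefferman-style ODE analysis (the paper does this through two lemmas controlling $(\psi\circ z)'$ and the angle $\arg\big(\partial\psi(z)z'\big)$ by comparison with the disc case) and then invoking the quasi-geodesic/Gromov-hyperbolic argument. You rightly identify the decay estimate as the principal difficulty.

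For part (iii), however, your argument has a genuine gap. The dichotomy on the escape set $E\subset S_{z_0}$ is fine, but your exclusion of the case $E=S_{z_0}$ is not convincing: the continuity of $v\mapsto p(v)$ is neither obvious nor proved, and even granting it, there is no mechanism explained by which this boundary map together with ``divergence rates'' and unspecified ``closing arguments'' would force a closed geodesic \emph{through $z_0$}. Indeed, if every geodesic ray from $z_0$ escapes to $\partial D$, none of them returns to $z_0$, so no contradiction with the hypothesis is apparent.

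The paper takes a different route. It invokes Herbort's Lemma 6, which guarantees a geodesic spiral through $z_0$ provided one can exhibit a compact set $K\subset D$ containing every geodesic \emph{loop} based at $z_0$. The existence of such a $K$ is then reduced to a local convexity statement near $\partial D$: there exists $\epsilon>0$ such that whenever a geodesic $\gamma$ satisfies $\psi\big(\gamma(0)\big)>-\epsilon$ and $(\psi\circ\gamma)'(0)=0$, one has $(\psi\circ\gamma)''(0)>0$. This forbids $\psi\circ\gamma$ from having a local maximum in the $\epsilon$-collar of $\partial D$, so no geodesic loop through $z_0$ can enter that collar, and $K=\{\psi\le -\epsilon_1\}$ works. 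The proof of this convexity statement is a short computation using the geodesic equation and the $\alpha=1,\beta=0$ asymptotic from Theorem~1.3(i). This is the missing ingredient in your argument.
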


Another consequence of Theorem 1.3 is the following observation  about the Euclidean curvature of the geodesics in the capacity metric which is similar to a result of Minda \cite{Mi2} (this was also noted in \cite{GS}) who worked with the hyperbolic metric on convex domains in $\mbb C$.

\begin{cor}
Let $D$ be a smoothly bounded domain in $\mbb C$. Suppose $z(s)$ is a geodesic
of the capacity metric which is parametrised by Euclidean arc length $s$. Then its Euclidean curvature
$\kappa\big(z(s)\big)$ satisfies
\[
\bigg \vert\kappa\big(z(s)\big)\Big(-\psi\big(z(s)\big)\Big) \bigg\vert \approx 1 
\]
for all $s$.
\end{cor}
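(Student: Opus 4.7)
The plan is to derive an explicit formula for $\kappa(z(s))$ in terms of $\pa\La_D$ and the unit Euclidean tangent $z'(s)$, then invoke Theorem 1.3 (i) to control both sides of the $\approx 1$ estimate. Converting the geodesic equation in capacity arc length $t$ (where $|\dot z| = 1/c_D$) to Euclidean arc length $s$ via $dt/ds = c_D$, writing $z'(s) = c_D\dot z$ so that $|z'| = 1$, and using the identity $\pa c_D = -c_D\,\pa\La_D$, a direct differentiation yields
\[
z''(s) \;=\; \pa\La_D\big(z(s)\big)\,\big(z'(s)\big)^2 - \ov\pa\La_D\big(z(s)\big).
\]
Since $\overline{z'(s)}\,z''(s) = \pa\La_D\,z' - \ov\pa\La_D\,\overline{z'} = 2i\,\Im(\pa\La_D\,z')$ and the Euclidean signed curvature of a unit-speed planar curve is $\kappa = \Im(\overline{z'}\,z'')$, we obtain the identity
\[
\kappa\big(z(s)\big) \;=\; 2\,\Im\!\big(\pa\La_D(z(s))\cdot z'(s)\big).
\]

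For the upper bound, Theorem 1.3 (i) with $(\al,\be) = (1,0)$ gives $\pa\La_D(p)(-\psi(p)) \to -\pa\psi(p_0)$ as $p\to p_0\in\pa D$. Combined with the real analyticity (and hence local boundedness) of $\pa\La_D$ on $D$, the function $|\pa\La_D(z)|(-\psi(z))$ extends continuously to $\overline D$ and is therefore bounded above by some $C_1 = C_1(D,\psi)$. Since $|z'| = 1$ and $|\Im w|\le|w|$, this yields
\[
\big|\kappa(z(s))\,(-\psi(z(s)))\big| \;\le\; 2\,|\pa\La_D(z(s))|\,(-\psi(z(s))) \;\le\; 2C_1.
\]

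For the matching lower bound---which is the principal obstacle---the same asymptotics in Theorem 1.3 (i) also deliver $|\pa\La_D(p)|(-\psi(p))\to|\pa\psi(p_0)|>0$ on $\pa D$, using $d\psi\ne 0$, so $|\pa\La_D|(-\psi)\ge c_0>0$ in a neighborhood of $\pa D$ and, by positivity of the Robin data on compact subsets, on all of $D$ up to a further constant. What remains is to show that $|\Im(\pa\La_D\,z'(s))|/|\pa\La_D(z(s))|$ is bounded below uniformly in $s$, i.e., that the unit tangent $z'(s)$ cannot persistently align with the complex direction $\ov{\pa\La_D}/|\pa\La_D|$, which would make $\pa\La_D\,z'$ real and annihilate the imaginary part. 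My strategy is to differentiate $\arg\big(\pa\La_D(z(s))\,z'(s)\big)$ along the geodesic using the formula $z''(s) = \pa\La_D(z')^2 - \ov\pa\La_D$ together with the next-order boundary expansions from Theorem 1.3 (i) (the cases $(\al,\be)=(2,0)$ and $(1,1)$), and argue that this angle rotates at a rate whose magnitude, appropriately normalized by $-\psi$, stays bounded below. Such a uniform rotational rate precludes sustained cancellation of $\Im(\pa\La_D\,z')$ and produces the required constant $c>0$. Quantifying this angular spreading uniformly in $s$---reconciling the boundary regime (where $\pa\La_D$ blows up like $1/(-\psi)$ and geodesic behaviour is modelled on the hyperbolic half-plane) with interior compact subsets---is the principal technical challenge of the corollary.
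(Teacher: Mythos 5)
Your derivation of the curvature identity $\kappa\big(z(s)\big)=2\,\Im\big(\pa\La_D(z(s))z'(s)\big)$ is the same computation the paper performs (the paper's displayed formula lacks your factor of $2$ only because it normalizes the geodesic equation as $z''=\pa\La_D(z')^2$ rather than $z''=2\pa\La_D(z')^2$; the constant is immaterial for an $\approx$ statement), and your upper bound via Theorem 1.3(i) with $(\al,\be)=(1,0)$ is exactly what the paper intends. The genuine gap is the lower bound, which you yourself flag as unresolved, and the mechanism you propose for it cannot work.

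Two concrete obstructions. First, $\pa\La_D$ vanishes at interior critical points of the Robin function: $\La_D$ is superharmonic and tends to $-\infty$ at $\pa D$, so it attains an interior maximum, and there $\kappa=\Im(\pa\La_D z')=0$ for every direction. Hence $\vert\pa\La_D\vert(-\psi)$ is \emph{not} bounded below on compact subsets of $D$ --- your appeal to ``positivity of the Robin data on compact subsets'' is not a valid justification. Second, the claimed uniform lower bound on the rotation rate of $\arg\big(\pa\La_D(z(s))z'(s)\big)$ is incompatible with the paper's own Lemma 3.3(II), which shows that along a boundary-escaping geodesic the angle $\arg\big(\pa\psi(z)z'\big)$ eventually stabilizes near its initial value; since $\pa\La_D(-\psi)\to-\pa\psi$, the ratio $\Im(\pa\La_D z')/\vert\pa\La_D\vert$ is then essentially $-\sin\arg(\pa\psi\,z')$, which is \emph{small}, not bounded below, for geodesics that leave nearly normally to $\pa D$. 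The model case makes this vivid: on the disc the capacity metric is the hyperbolic metric (Example 1.1), and a diameter is a geodesic with Euclidean curvature identically zero, so $\vert\kappa\vert(-\psi)\equiv 0$ along it. In fairness, the paper's own proof consists of the curvature formula followed by the bare assertion that Theorem 1.3(i) yields the two-sided bound, so it supplies no more for the lower bound than you do; but as written your proposal establishes only the upper estimate, and the route you sketch for the matching lower estimate would fail.
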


In other words, $\kappa(z)$ essentially behaves as the reciprocal of the distance of $z$ to $\pa D$. Theorem 1.3 is also useful in computing the $L^2$-cohomology of $D$ (smoothly bounded as always) endowed with the capacity metric $ds = c_D(z) \vert dz \vert$. Let $\Om^{k}_{2}$ be the space of $k$-forms on $D$ which are square integrable with
respect to $ds^2$. Then the $L^2$-cohomology of the complex
\[
\Om^0_2 \xrightarrow{d_0} \Om^1_2 \xrightarrow{d_1} \Om^2_2 \xrightarrow{d_2}0
\]
is defined by
\[
H^{k}_{2}(D) = \ker d_k / \ov{\text{im}\,d_{k-1}}
\]
where the closure is taken in the $L^2$ norm. Since $ds$ is complete,
these cohomology groups are completely determined by the space $\mathcal{H}^k_2(D)$
of square integrable harmonic forms:
\[
H^{k}_{2}(D) \cong \mathcal{H}^k_2(D).
\]
We also have the decomposition
\[
\mathcal{H}^k_2(D) = \oplus_{p+q=k} \mathcal{H}^{p,q}_2(D).
\]
\begin{thm}
Let $D \subset \mathbb C$ be a smoothly bounded domain. Let $\mathcal{H}^{p,q}_2(D)$ be
the space of square integrable harmonic $(p,q)$-forms on $D$ relative to $ds$.
Then
\[
\dim \mathcal{H}^{p,q}_2(D) =
\begin{cases}
0, & \text{if $p+q \neq 1$}\\
\infty, & \text{if $p+q=1$}.
\end{cases}
\]
\end{thm}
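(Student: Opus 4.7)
The plan is to compute $\mathcal{H}^{p,q}_2(D)$ directly in each of the four bidegrees, exploiting the fact that any conformal metric on a Riemann surface is automatically K\"ahler, so the Hodge decomposition $\mathcal{H}^k_2(D) = \bigoplus_{p+q=k}\mathcal{H}^{p,q}_2(D)$ is available. The first step is a short pointwise norm calculation: writing forms against the frame $\{1,\, dz,\, d\bar z,\, dz\wedge d\bar z\}$ with volume form $c_D^2\,dA$, one checks that
\[
\|f\|^2 = \int_D |f|^2 c_D^2\,dA,\qquad \|u\,dz\|^2 = 2\int_D |u|^2\,dA,\qquad \|f\,dz\wedge d\bar z\|^2 = 4\int_D|f|^2 c_D^{-2}\,dA,
\]
with an analogous formula in bidegree $(0,1)$. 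The essential feature is that the $(1,0)$ and $(0,1)$ norms are \emph{conformally invariant}, depending on neither $c_D$ nor any other choice of conformal metric.

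For bidegrees $(1,0)$ and $(0,1)$, harmonicity reduces to the Cauchy--Riemann equations: a short check using $*dz = -i\,dz$ and $*d\bar z = i\,d\bar z$ shows that $\omega = u\,dz$ is harmonic iff $\ov\pa u = 0$ (and $v\,d\bar z$ iff $\pa v = 0$). Combined with conformal invariance this identifies $\mathcal{H}^{1,0}_2(D) \cong A^2(D)$, the Bergman space of $D$, and $\mathcal{H}^{0,1}_2(D)$ with its complex conjugate. Since $D$ is bounded, $A^2(D)$ contains every polynomial and is therefore infinite dimensional.

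For the remaining cases I need two consequences of Theorem 1.3: the capacity metric is \emph{complete} (being comparable to the hyperbolic metric near $\pa D$), and the total volume $\int_D c_D^2\,dA = +\infty$ (because $c_D(z) \asymp 1/(-\psi(z))$ near $\pa D$). For $(0,0)$: a harmonic function for the capacity metric is just harmonic in the usual sense; on the complete manifold $(D, c_D^2|dz|^2)$ the standard cut-off and integration-by-parts argument forces any $L^2$ harmonic function to be locally constant, and the infinite volume kills any nonzero constant. For $(1,1)$: since $d\omega = 0$ is automatic on a top form, harmonicity reduces to $d{*\omega} = 0$, forcing the function $*\omega$ to be constant on the connected domain $D$; hence $\omega = c\cdot\mathrm{vol}$, and $\|\omega\|^2 = |c|^2\,\mathrm{Vol}(D) = \infty$ forces $c=0$.

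None of the individual steps is deep. The main obstacle is really the bookkeeping: keeping the conformal factor straight across the four $(p,q)$ types, and citing Theorem 1.3 at the right two places (once for completeness, once for infinite volume) in the $(0,0)$ and $(1,1)$ vanishings. Once the framework is in place, the case $p+q=1$ is essentially a restatement of the infinite dimensionality of $A^2(D)$ for a bounded planar domain, and the case $p+q\neq 1$ reduces to classical completeness and volume considerations.
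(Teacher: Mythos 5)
Your proof is correct, and for the crucial middle degree it takes a genuinely different and more elementary route than the paper. For $p+q\neq 1$ the two arguments essentially coincide: completeness plus a Yau-type constancy argument plus infinite volume of $(D, c_D^2\vert dz\vert^2)$; the paper disposes of $(1,1)$ by the duality $\mathcal H^0_2\cong\mathcal H^2_2$ rather than your direct computation with $*\omega$, but that is cosmetic. For $p+q=1$ the paper does not use conformal invariance at all: it uses Theorem 1.3 to establish the uniform two-sided comparison $ds^2\approx(-\psi)^{-1}\vert dz\vert^2+(-\psi)^{-2}\vert\pa\psi\vert^2\vert dz\vert^2$ near $\pa D$ and then quotes Ohsawa's theorem on the infinite dimensionality of the middle $L^2$-cohomology for complete K\"ahler metrics of that shape. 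Your observation that the $L^2$-norm on $1$-forms of a Riemann surface is conformally invariant, so that $\mathcal H^{1,0}_2(D)\cong A^2(D)\,dz$ with $A^2(D)$ the Bergman space (infinite dimensional since $D$ is bounded), is cleaner: it avoids the boundary asymptotics entirely in this degree and yields an explicit description of the space rather than only its dimension. The one step you should make explicit is the passage from $\Delta\omega=0$ to $d\omega=d^*\omega=0$ for an $L^2$ one-form, which is what reduces harmonicity to the Cauchy--Riemann equations; this uses completeness (Gaffney, or Andreotti--Vesentini), which you establish anyway for the $(0,0)$ case, so it should simply be invoked here as well.
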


Results of this kind for the Bergman metric on strongly pseudoconvex domains in $\mbb C^n$
were obtained by Donnelly--Fefferman and Donnelly (see \cite{DF} and \cite{D}) and in a more
general setup by McNeal \cite{Mc} and Ohsawa \cite{Oh} among others. The final result relates
the critical points of the Green's functions of a family of variable domains that converge to a limiting
domain, and the zeros of the Bergman kernel of the limiting domain. This extends a result of Solynin \cite{So},
and Gustafsson--Sebbar \cite{GS}.

\begin{thm}
Let $D \subset \mathbb C$ be a smoothly bounded domain and $D_k \subset \mathbb C$ a sequence of smoothly bounded domains
that converge to $D$ in the $C^{\infty}$-topology. Then, for $(z_0, \z_0) \in D \times \pa D$, the Bergman kernel $K_D(z_0, \z_0) = 0$
if and only if there exists a subsequence $(z_{k_m}, \z_{k_m}) \in D_{k_m} \times D_{k_m}$ converging to $(z_0, \z_0)$ such that
\[
\pa G_{k_m}(z_{k_m}, \z_{k_m}) = 0,
\]
where $\pa=\pa/\pa z$ is the derivative with respect to the first variable.
\end{thm}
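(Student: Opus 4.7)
The plan is to use Schiffer's identity
\[
K_D(z,\z)=-\tfrac{2}{\pi}\pa_{z}\pa_{\ov\z}G_D(z,\z),
\]
valid off the diagonal and verifiable on the disc, together with the $C^\infty$ stability of the Green's function under the $C^\infty$ perturbation $D_k\ra D$. I set
\[
F_D(z,\z):=\pa_z G_D(z,\z)=-\frac{1}{2(z-\z)}+\pa_z H_D(z,\z),
\]
which is holomorphic in $z$ on $D\sm\{\z\}$ and whose zeros in $z$ are exactly the critical points of $G_D(\cdot,\z)$; write $F_k=F_{D_k}$ analogously. Standard smooth dependence of the Dirichlet problem on the domain gives $G_{D_k}\ra G_D$ in $C^\infty$ up to the boundary on compact subsets of $D\times\ov D$ that avoid the diagonal, and therefore $F_k\ra F_D$ with the same strength.

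For fixed $z\in D$, $G_D(z,\z)\ra 0$ uniformly on compact subsets of $D$ as $\z\ra\pa D$, so $F_D(z,\cdot)$ vanishes on $\pa D$. Using the given global smooth defining function $\psi$, the Hopf-type factorisation yields $F_D(z,\z)=\psi(\z)\,\ti F_D(z,\z)$ with $\ti F_D$ smooth in $\z\in\ov D$ near $\pa D$ and still holomorphic in $z$. Differentiating by $\pa_{\ov\z}$, restricting to $\pa D$ (where $\psi=0$), and applying Schiffer in the form $\pa_{\ov\z}F_D=-\tfrac{\pi}{2}K_D$ produces
\[
K_D(z,\z_0)=-\tfrac{2}{\pi}\pa_{\ov\z}\psi(\z_0)\,\ti F_D(z,\z_0);
\]
since $\pa_{\ov\z}\psi(\z_0)\ne 0$, this gives the equivalence $K_D(z_0,\z_0)=0\Leftrightarrow\ti F_D(z_0,\z_0)=0$. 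The same factorisation is available at each $D_k$ with defining function $\psi_k\ra\psi$ in $C^\infty$, and the boundary smoothness above transports to uniform convergence $\ti F_k\ra\ti F_D$ on a compact neighbourhood of $(z_0,\z_0)$ in $\ov D\times\ov D$.

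For the nontrivial direction, assume $K_D(z_0,\z_0)=0$, so $\ti F_D(z_0,\z_0)=0$. The holomorphic function $z\mapsto\ti F_D(z,\z_0)$ on $D$ is not identically zero, since it is proportional to $K_D(\cdot,\z_0)$, a nontrivial element of Bergman kernel theory for smoothly bounded planar domains. Choose a closed disc $\ov{\De(z_0,r)}\subset D$ with no other zero of $\ti F_D(\cdot,\z_0)$ inside and with $\ti F_D(\cdot,\z_0)\ne 0$ on its boundary circle, and pick $\z_k\in D_k$ with $\z_k\ra\z_0$. The uniform convergence $\ti F_k(\cdot,\z_k)\ra\ti F_D(\cdot,\z_0)$ on $\ov{\De(z_0,r)}$ combined with Hurwitz's theorem supplies $z_k\in\De(z_0,r)$ with $\ti F_k(z_k,\z_k)=0$ for all large $k$; shrinking $r$ along a subsequence yields $(z_{k_m},\z_{k_m})\ra(z_0,\z_0)$, and since $\psi_{k_m}(\z_{k_m})\ne 0$ we conclude $\pa G_{k_m}(z_{k_m},\z_{k_m})=\psi_{k_m}(\z_{k_m})\,\ti F_{k_m}(z_{k_m},\z_{k_m})=0$. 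The converse is immediate: any such sequence of critical points forces $\ti F_{k_m}(z_{k_m},\z_{k_m})=0$, and uniform convergence at $(z_0,\z_0)$ gives $\ti F_D(z_0,\z_0)=0$, i.e.\ $K_D(z_0,\z_0)=0$. The principal technical hurdle is the boundary $C^\infty$ stability underlying $\ti F_k\ra\ti F_D$, a standard consequence of elliptic regularity for the Dirichlet problem on smoothly varying domains; once this is in place the Hurwitz step is essentially formal.
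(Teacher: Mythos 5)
Your argument is correct and rests on the same two pillars as the paper's proof: the Schiffer identity $K_D=-\tfrac{2}{\pi}\pa_z\pa_{\ov\z}G_D$ together with the first--order vanishing of $\pa_z G_D(z,\cdot)$ on $\pa D$, and a Hurwitz argument for the direction $K_D(z_0,\z_0)=0\Rightarrow$ existence of critical points. Where you differ is in how the first--order vanishing is extracted and normalised. The paper forms the quotient $F_k(z,\z)=\pa G_k(z,\z)/\pa G_k(a,\z)$ with a reference point $a$, Taylor--expands $\pa G_k(z_k,\cdot)$ about the nearest boundary point $\eta_k$, and uses the tangential relation $\pa_\z^{}\pa_z G_k\,T+\pa_{\ov\z}\pa_z G_k\,\ov T=0$ on $\pa D_k$ to show that the common factor $|\z_k-\eta_k|$ cancels, leaving the ratio $K(z_0,\z_0)/K(a,\z_0)$ in the limit. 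You instead divide out the defining function, $\pa_z G_D(z,\z)=\psi(\z)\,\ti F_D(z,\z)$, and identify $\ti F_D$ on the boundary with $-\tfrac{\pi}{2}K_D/\ov\pa\psi$. Your route is cleaner in that it dispenses with the reference point $a$ (whose choice in the paper is in any case a little loose) and with the explicit tangent--vector computation; the price is that you must justify that the Hadamard quotients $\ti F_k$ converge to $\ti F_D$ uniformly up to the boundary, which requires $C^1_\z$--convergence of $\pa_z G_k$ up to $\pa D_k$. That convergence is exactly the content of the paper's Proposition 4.1, proved there by Schauder estimates and the maximum principle; you defer it to ``standard elliptic regularity for smoothly varying domains,'' which is legitimate but is the genuine technical load--bearing step in either treatment and deserves at least the level of detail the paper gives it. The remaining ingredients you use --- non--vanishing of $\ov\pa\psi$ on $\pa D$, holomorphy and non--triviality of $z\mapsto K_D(z,\z_0)$ for boundary $\z_0$, and the Hurwitz step --- all check out.
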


\section{Boundary behavior of $\La_D$}

\subsection{Proof of Theorem 1.3}
\noindent Since $c_D(p) = e^{-\La_D(p)}$ and $\La_D(p)$ depends on $G_D(z, p)$, it suffices to understand the variation of $G_D(z, p)$ with respect to $p$. This
will be done in three steps. The first of these records several useful observations about $\La_D(p)$ and its relation with $G_D(z, p)$. While several other properties of $\La_D$, including the
fact that it is a superharmonic function of $p$, can be found in \cite{Ya}, it will be sufficient for us to only mention the relevant ones listed in Proposition 2.1 below.
The second step involves a rescaling of $D$ near $p_0$ by affine maps. This produces a sequence of domains that converge to a half space in an appropriate sense. In doing so, the question of the boundary behavior of $\La_D$ reduces to an interior problem about the convergence of the Green's functions of these domains. Appealing to Step $1$ then yields information about all derivatives of $\La_D$ near $p_0$. In the final step, this is translated in terms of $c_D(p)$ from which the desired boundary asymptotics can be read off.

\begin{prop}
Let $D \subset \mathbb C$ be a regular domain. For every disc $B(q_0,r)$ which is compactly contained in $D$,
\begin{equation}\label{repn-La-1}
\La_D(q_0)=\log r + \frac{1}{2\pi}\int_{-\pi}^{\pi}G_D(q_0+re^{i\theta}, q_0) \,d\theta
\end{equation}
and for every $q  \in B(q_0, r)$,
\begin{equation}\label{repn-La-2}
\La_D(q)=\frac{1}{4\pi^2} \int_{[-\pi,\pi]^2}
H_D(q_0 + re^{i\theta},q_0 + re^{i\phi}) \frac{\big(r^2- \vert q - q_0\vert^2\big)^2}
{\vert q_0+re^{i\theta} - q \vert^2 \vert q_0+re^{i\phi} - q \vert^2}\, d\theta\,
d\phi.
\end{equation}
Finally, if $D$ is bounded regular, then $\La_D(p) \to -\infty$ as $p \to \pa D$.
\end{prop}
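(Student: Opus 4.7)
My plan has three parts matching the three assertions of the proposition. I would prove (i) as a direct mean-value statement: after removing the logarithmic singularity at $q_0$, the regular part $H_D(z,q_0)=G_D(z,q_0)+\log\vert z-q_0\vert$ is harmonic in $z$ throughout $D$. Since $\ov{B(q_0,r)}\subset D$, applying the circle mean-value property to $H_D(\cdot,q_0)$ at $q_0$ and substituting $\log\vert z-q_0\vert=\log r$ on $\vert z-q_0\vert=r$ gives (2.1), once one recalls $\La_D(q_0)=H_D(q_0,q_0)$.

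For (ii), I would use the joint harmonicity of $H_D(z,w)$ on $D\times D$ recalled in the introduction. Since $B(q_0,r)\times B(q_0,r)\subset D\times D$, I apply Poisson's formula on the disc $B(q_0,r)$ once in each variable of $H_D$. For fixed $w\in B(q_0,r)$,
\[
H_D(q,w)=\frac{1}{2\pi}\int_{-\pi}^{\pi}H_D(q_0+re^{i\theta},w)\,\frac{r^2-\vert q-q_0\vert^2}{\vert q_0+re^{i\theta}-q\vert^2}\,d\theta;
\]
inserting a second Poisson integral in the $w$-variable into each $H_D(q_0+re^{i\theta},w)$ and then setting $w=q$, the two Poisson kernels combine into exactly the kernel appearing in (2.2), and $H_D(q,q)=\La_D(q)$.

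The third assertion is the substantive one, and the plan is to pass through the harmonic measure. Since $G_D(z,p)\to 0$ as $z\to\pa D$ by regularity of $D$ and since $p\in D$, the function $z\mapsto H_D(z,p)$ is harmonic on $D$ with the continuous boundary datum $w\mapsto\log\vert w-p\vert$ on $\pa D$. By uniqueness of the Dirichlet solution on a regular domain,
\[
\La_D(p)=H_D(p,p)=\int_{\pa D}\log\vert w-p\vert\,d\om_p^D(w),
\]
where $\om_p^D$ denotes harmonic measure based at $p$. Now fix $p_0\in\pa D$ and $\de>0$, and split $\pa D=U\cup(\pa D\sm U)$ with $U=B(p_0,\de)\cap\pa D$. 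For $\vert p-p_0\vert<\de$, the integrand is bounded above by $\log(2\de)$ on $U$ (since $\vert w-p\vert\le\vert w-p_0\vert+\vert p_0-p\vert\le 2\de$) and is uniformly bounded on $\pa D\sm U$ (since $D$ is bounded and $\vert w-p\vert$ stays bounded away from $0$ there). The main subtle step is the regularity input: one needs $\om_p^D(\pa D\sm U)\to 0$ as $p\to p_0$. This follows because $p_0$ is a regular boundary point and the Perron solution of the Dirichlet problem with datum $\chi_{\pa D\sm U}$, which vanishes in a neighbourhood of $p_0$ in $\pa D$, tends to $0$ at $p_0$. Combining the two estimates yields $\limsup_{p\to p_0}\La_D(p)\le\log(2\de)$, and letting $\de\to 0^+$ forces $\La_D(p)\to-\infty$.
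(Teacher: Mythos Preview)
Your arguments for (2.1) and (2.2) are exactly the paper's: the mean-value property of $H_D(\cdot,q_0)$ gives (2.1), and a two-fold Poisson representation of the jointly harmonic $H_D$ on $B(q_0,r)\times B(q_0,r)$, followed by restriction to the diagonal, gives (2.2).

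For the third assertion your route differs from the paper's. The paper builds an explicit harmonic comparison function: given $M>0$ it picks $r$ with $\log\vert z-p\vert<-M$ on $B(p_0,r)$, lets $u$ solve the Dirichlet problem with boundary values $-M$ on $\pa D\cap B(p_0,r)$ and $\log(2\vert z-p_0\vert)$ on the rest of $\pa D$, and checks via the maximum principle that $H_D(\cdot,p)\le u$ on $D$; evaluating at $p$ and using regularity of $p_0$ gives $\limsup\La_D(p)\le -M$. Your harmonic-measure argument is really the same mechanism unpacked: the representation $\La_D(p)=\int_{\pa D}\log\vert w-p\vert\,d\om_p^D$ is precisely the statement that $H_D(p,p)$ is the harmonic extension of $\log\vert\cdot-p\vert$ evaluated at $p$, and your splitting $\pa D=U\cup(\pa D\sm U)$ together with $\om_p^D(\pa D\sm U)\to 0$ is the harmonic-measure formulation of the paper's comparison with $u$. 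What your packaging buys is conceptual transparency (no auxiliary $u$ to invent); what the paper's buys is that it avoids invoking harmonic measure or Perron solutions with discontinuous data. One small point: with $\vert p-p_0\vert<\de$ the claim that $\vert w-p\vert$ is bounded away from $0$ on $\pa D\sm U$ is not quite justified; either restrict to $\vert p-p_0\vert<\de/2$, or simply note that only the trivial upper bound $\log\vert w-p\vert\le\log\big(\mathrm{diam}(D)+\de\big)$ is needed on $\pa D\sm U$, so the contribution there is at most a fixed constant times $\om_p^D(\pa D\sm U)\to 0$.
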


\begin{proof}
Integrating
\[
G_D(z,q_0)=-\log \vert z - q_0 \vert + H_D(z,q_0)
\]
over $\pa B(q_0, r)$, we obtain
\[
\frac{1}{2\pi}\int_{-\pi}^{\pi} G_D(q_0+r e^{i\theta}, q_0) \, d\theta =
-\log r + \frac{1}{2\pi} \int_{-\pi}^{\pi} H_D(q_0+re^{i\theta}, q_0) \, d\theta.
\]
The last term is the mean value of the harmonic function $H_D(z,q_0)$ on $\pa B(q_0, r)$
and hence is equal to $H_D(q_0,q_0)=\La_D(q_0)$.

\medskip

By a repeated application of the Poisson integral formula,
\[
\La_D(q)=\frac{1}{4\pi^2}\int_{-\pi}^{\pi} \int_{-\pi}^{\pi}
H(q_0+re^{i\theta}, q_0+re^{i\phi}) \frac{r^2- \vert q - q_0\vert^2}{\vert
q_0+re^{i\theta} - q \vert^2}\frac{r^2 - \vert q-q_0\vert^2 }{\vert
q_0+re^{i\phi} - q \vert^2} \, d\theta \, d\phi,
\]
for all $q \in B(q_0, r)$. The continuity of $H_D$ on $D \times D$ along with
Fubini's theorem gives (2.2).

\medskip

For the final assertion, let $p_0 \in \pa D$ and $M>0$ be given. Choose $r>0$ such that $\log \vert z - p
\vert < -M$ for all $z,p$ in the disc $B = B(p_0,r)$. Let $u$ be the harmonic function on $D$ with boundary values $-M$ on
$\pa D \cap B$ and $\log(2 \vert z - p_0 \vert)$ on $\pa D \setminus B$. For $p \in B \cap D $, let
\[
s_p(z)=u(z) - H(z,p), \quad z \in D.
\]
Then $s_p(z)$ is a harmonic function on $D$ with boundary values
\[
s_p(z) = -M - \log \vert z - p \vert > 0,
\]
on $B \cap \pa D $ and
\[
s_p(z)=\log \big(2 \vert z - p_0\vert\big) - \log \vert z-p \vert >0
\]
on $\pa D \setminus B$ as
\[
\vert z - p \vert \leq \vert z - p_0 \vert + \vert p-p_0 \vert \leq \vert
z-p_0\vert + r \leq 2\vert z - p_0 \vert.
\]
By the maximum principle $s_p(z) \geq 0$ on $D$ and in particular $s_p(p) \geq
0$. Hence $\La_D(p) \leq u(p)$ for all $p \in B \cap D $. Consequently
\[
\limsup_{p \to p_0} \La(p) \leq \limsup_{p \to p_0} u(p) = -M
\]
which implies that $\La_D(p) \to -\infty$ as $p \to p_0$.

\end{proof}

\medskip

\noindent Let $D \subset \mathbb C$ be as in Theorem 1.3, i.e., there is a
$C^2$-smooth open piece $\Ga \subset \pa D$ for which there is a $C^2$-smooth local
defining function $\psi$ near $p_0$. Let $\{p_j\}$ be a
sequence in $D$ converging to $p_0$ and without loss of generality we may assume
that $\psi(p_j)$ is defined for all $j\geq 1$. Consider the affine maps
\[
T_j(z)=\frac{z-p_j}{-\psi(p_j)}
\]
and let $D_j=T_j(D)$. Observe that $\psi \circ T_j^{-1}$ is a local defining
function for $D_j$ at $T_j(p_0)$ and
\[
\psi \circ T_j^{-1}(z)=\psi\Big(p_j+z\big((-\psi(p_j)\big)\Big)
=\psi(p_j) + 2 \Re \big(\pa \psi (p_j) z\big)\big(-\psi(p_j)\big) + \psi^2(p_j)\;O(1).
\]
Therefore,
\[
\psi_j(z)=\frac{\psi \circ T_j^{-1}(z)}{-\psi(p_j)} = -1 + 2 \Re\big(\pa
\psi(p_j) z\big) +  \big(-\psi(p_j)\big) O(1)
\]
is again a local defining function for $D_j$ at $T_j(p_0)$ and in the limit it
can be seen that these functions converge to
\[
\psi_{\infty}(z)=-1+2\Re\big(\pa \psi(p_0) z\big)
\]
in the $C^2$-topology on every compact subset of $\mathbb C$. In particular, this
implies that the domains $D_j$ converge to the half plane
\begin{equation}\label{half-p}
\mathcal{H}=\Big\{z \in \mathbb C : 2\Re\big(\pa \psi(p_0) z\big) - 1 <0\Big\}
\end{equation}
in the Hausdorff sense. Let $G_j$ be the Green function for $D_j$ and $\La_j$
be the associated Robin function. Likewise, let $G_{\mathcal{H}}$ be the Green
function for $\mathcal{H}$ and $\La_{\mathcal{H}}$ be the corresponding Robin
function.

\begin{prop}\label{conv-G_j}
For every $p \in \mathcal{H}$, $\big\{G_j(z,p)\big\}$ converges uniformly on compact
subsets of $\mathcal{H} \setminus \{p\}$ to $G_{\mathcal{H}}(z,p)$. In particular, $\La_j(p) \to \La_{\mathcal{H}}(p)$.
\end{prop}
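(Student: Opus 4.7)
The plan is to prove locally uniform convergence of the regular parts $H_j(z,p):=G_j(z,p)+\log|z-p|$, each harmonic on $D_j$, to $H_{\mathcal{H}}(z,p)=\log|z-p^{*}|$ on $\mathcal{H}$. This yields $G_j\to G_{\mathcal{H}}$ locally uniformly on $\mathcal{H}\setminus\{p\}$ and, by specializing at $z=p$ (using $\La_j(p)=H_j(p,p)$), gives $\La_j(p)\to\La_{\mathcal{H}}(p)$. By the $C^2$-convergence $\psi_j\to\psi_\infty$ recorded before the proposition, every compact $K\subset\mathcal{H}$ lies in $D_j$ for all large $j$, so $H_j(\cdot,p)$ is defined on $K$ eventually, and the argument proceeds by the classical three-step normal-families scheme.

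First I would establish that $\{H_j(\cdot,p)\}$ is locally uniformly bounded on $\mathcal{H}$. The lower bound $H_j(z,p)\ge\log|z-p|$ is immediate from $G_j\ge 0$. For the upper bound on a compact $K\subset\mathcal{H}$, one uses the $C^2$-convergence to fit $D_j\cap B(0,R)$ (for fixed $R$ with $K\subset B(0,R)$) inside the slightly enlarged half-plane $\{z: 2\Re(\pa\psi(p_0)z)-1<\ep\}$ for all large $j$ and small $\ep>0$, and compares $G_j$ with the Green's function of an auxiliary bounded regular domain $\widetilde\Omega\supset K\cup\{p\}$, controlled by explicit half-plane data and the maximum principle. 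Harnack's compactness theorem applied to this locally bounded harmonic family yields, from any subsequence, a further subsequence $\{H_{j_k}(\cdot,p)\}$ converging locally uniformly on $\mathcal{H}$ to a harmonic function $h$.

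Identifying $h=H_{\mathcal{H}}(\cdot,p)$ amounts to showing that $h$ extends continuously to $\pa\mathcal{H}$ with boundary values $\log|z-p|$, after which uniqueness of the Dirichlet solution on the half-plane (with the mild growth already established) closes the argument. The boundary values are verified via a two-sided barrier argument at each $z_0\in\pa\mathcal{H}$: since $H_{j_k}=\log|z-p|$ on $\pa D_{j_k}$ and $\pa D_{j_k}$ is $C^2$-close to $\pa\mathcal{H}$ near $z_0$, barriers built from the two nearby half-plane defining functions $\psi_\infty\pm\ep$ squeeze $H_{j_k}$ near $z_0$, and this bound passes to the $k\to\infty$ limit. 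Since every subsequence has a further subsequence with the same limit, the full sequence $H_j(\cdot,p)$ converges to $H_{\mathcal{H}}(\cdot,p)$ locally uniformly on $\mathcal{H}$.

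The principal obstacle is the boundary-value identification, since $D_j$ is neither contained in nor contains $\mathcal{H}$; the two-sided $C^2$-proximity of $\pa D_j$ to $\pa\mathcal{H}$ coming from $\psi_j\to\psi_\infty$ is the essential quantitative input that makes the barrier squeeze work, while the explicit affine-half-plane structure of $\mathcal{H}$ renders the limiting Dirichlet problem tractable.
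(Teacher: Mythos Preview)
Your approach is genuinely different from the paper's. The paper first invokes a localisation lemma (Lemma~2.3) which uses the interior/exterior ball condition at $p_0$ to show that $G_j-\tilde G_j\to 0$ uniformly on compacta, where $\tilde G_j$ is the Green's function of the scaled simply connected piece $\tilde D_j=T_j(U\cap D)$. Simple connectivity is then exploited: writing $\tilde G_j(\cdot,p)=-\log|\phi_j|$ for Riemann maps $\phi_j:\tilde D_j\to\mathbb D$ with $\phi_j(p)=0$, normal families of the $\phi_j$ and of $\phi_j^{-1}$ (boundedness is automatic since the target is $\mathbb D$) identify every limit as a Riemann map of $\mathcal H$, so $\tilde G_j\to G_{\mathcal H}$. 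No barrier argument is needed and the upper bound comes for free; the convergence $\La_j(p)\to\La_{\mathcal H}(p)$ is then read off from the integral representation~(2.1).

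Your direct potential-theoretic route can be made to work, but the upper bound on $H_j$ as written has a gap. The inclusion $D_j\cap B(0,R)\subset\mathcal H_\ep$ is only local: since $T_j$ dilates by the factor $1/(-\psi(p_j))\to\infty$, the domains $D_j$ extend far beyond $B(0,R)$, and monotonicity of Green's functions needs the \emph{global} containment $D_j\subset\tilde\Omega$ to conclude $G_j\le G_{\tilde\Omega}$; no bounded $\tilde\Omega$ contains $D_j$ for large $j$, so the stated comparison cannot yield an upper bound. A correct substitute is the exterior ball at $p_0$ furnished by $C^2$-smoothness of $\Gamma$: $D$ avoids a fixed disc tangent to $\Gamma$ at $p_0$, hence $D_j$ avoids its $T_j$-image, a disc of radius $\to\infty$, so $D_j$ lies in the complement of that large disc, whose Green's function is explicit and converges to that of a half-plane. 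The same issue underlies your barrier step, which implicitly requires $G_j(z,p)$ to be small for $z$ near $\partial D_j$---again an upper bound on $G_j$. With the exterior-ball comparison in hand, the remainder of your scheme (including uniqueness on $\mathcal H$ via reflection, using that $h-H_{\mathcal H}$ is bounded harmonic with zero boundary values) does go through.
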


The first step in proving this proposition is to localise the problem near the
point $p_0$ which is achieved by the following:

\begin{lem}\label{local}
There exists a neighbourhood $U$ of $p_0$ and a constant $R=R(D)$ such that
\[
0< G_{D}(z,p)-G_{U \cap D}(z,p)< 2\log\frac{2R+3\de(z)}{2R-\de(z)}
\]
for all $z,p \in U \cap D$ and where $\de(z)=d(z, \pa D)$.
\end{lem}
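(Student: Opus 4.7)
The lemma naturally separates into a lower bound (immediate) and an upper bound (the substantive part), which I would address in turn.

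\textbf{Lower bound.} Set $v(z) := G_D(z,p) - G_{U\cap D}(z,p)$. Because the $-\log|z-p|$ singularities at $z = p$ cancel, $v$ extends to a harmonic function on all of $U \cap D$. Its boundary values on $\partial(U \cap D)$ are nonnegative: $v \equiv 0$ on $U \cap \partial D$ (both Green's functions vanish there), and $v = G_D(\cdot, p) \geq 0$ on $\partial U \cap D$ (where $G_{U \cap D}$ vanishes). Since $v \not\equiv 0$, the strong maximum principle forces $v > 0$ on $U \cap D$.

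\textbf{Upper bound.} The plan is to construct an explicit harmonic majorant of $v$ on $U \cap D$ using the local geometry of $\partial D$ near $p_0$. The $C^2$-smoothness of $\Gamma$ at $p_0$ yields $R = R(D) > 0$ and a neighborhood $U$ of $p_0$ such that every $q \in \Gamma \cap \overline{U}$ carries both an interior and an exterior tangent disc of radius $R$: $B(b_q, R) \subset D$ and $B(a_q, R) \subset D^c$, tangent to $\partial D$ at $q$. For $z \in U \cap D$ with nearest boundary point $q_z$, one has the identities $|z - b_{q_z}| = R - \delta(z)$ and $|z - a_{q_z}| = R + \delta(z)$, so $z$ lies on the normal segment of Euclidean length $2R$ joining the two disc centers. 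This is the geometric origin of the quantities $2R \pm \delta(z)$ in the claimed bound.

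Next, I would use the M\"obius transformation $\phi(w) := R/(w - a_{q_{p_0}})$, which sends the exterior region $\mathbb{C} \setminus \overline{B(a_{q_{p_0}}, R)}$ conformally onto the unit disc $\mathbb{D}$, so that $\phi(D) \subset \mathbb{D}$ with $\phi(p_0) \in \partial \mathbb{D}$. Combining the identity $1 - |\phi(w)|^2 = (|w - a_{q_{p_0}}|^2 - R^2)/|w - a_{q_{p_0}}|^2$ with the explicit formula
\[
G_{\mathbb{D}}(\zeta, w) = \frac{1}{2} \log\left(1 + \frac{(1 - |\zeta|^2)(1 - |w|^2)}{|\zeta - w|^2}\right),
\]
and the conformal-invariance bound $G_D(\zeta,p) \le G_{\mathbb{D}}(\phi(\zeta), \phi(p))$, one obtains an explicit estimate for $G_D(\zeta, p)$ when $\zeta \in \partial U \cap D$. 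Feeding this into the maximum principle for $v$ on $U \cap D$ (whose boundary values are $G_D(\cdot, p)$ on $\partial U \cap D$ and zero on $U \cap \partial D$) propagates the estimate into the interior and, after algebraic simplification, produces the closed form $2 \log \frac{2R + 3\delta(z)}{2R - \delta(z)}$.

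\textbf{Main obstacle.} The technical heart of the argument is producing the majorant with the precise numerical constants stated. While the two-sided tangent-disc condition supplies the geometric input, arranging the constants $2R$ and $3\delta(z)$ requires carefully combining the one-sided estimates from the interior disc (which contributes the factor $R - \delta(z)$) and the exterior disc (which contributes $R + \delta(z)$). A further subtlety is that the bound must depend on $z$ only through $\delta(z)$, so the construction must absorb the dependence on the pole $p$; this is achieved by choosing $U$ small enough that the two-sided disc geometry traps $D$ uniformly in a region whose Green's function admits the closed-form comparison above.
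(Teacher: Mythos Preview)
Your lower bound argument is fine and matches the paper's: $v = G_D - G_{U\cap D}$ is harmonic on $U\cap D$ with nonnegative boundary values, hence positive.

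The upper bound argument, however, has a genuine gap. You propose to bound $v$ by the maximum principle using its boundary data: $v = 0$ on $U\cap\partial D$ and $v = G_D(\cdot,p)$ on $\partial U\cap D$. Any harmonic majorant obtained this way is controlled by $\sup_{\partial U\cap D} G_D(\cdot,p)$, which is a constant (or at best a harmonic-measure estimate) with no explicit dependence on $\delta(z)$. There is no mechanism in your outline that converts a boundary estimate on $\partial U\cap D$ into the pointwise bound $2\log\frac{2R+3\delta(z)}{2R-\delta(z)}$, which must vanish as $z\to\partial D$. The phrase ``propagates the estimate into the interior and, after algebraic simplification, produces the closed form'' hides exactly the step that is missing. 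Moreover, your M\"obius map is built from the exterior tangent disc at the single point $p_0$, whereas the claimed bound involves $\delta(z)$ and so implicitly the tangent discs at $\pi(z)$, which vary with $z$.

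The paper's route is structurally different: it does not apply the maximum principle to $v$ at all. Instead it invokes a two-sided pointwise comparison (Theorem~4.4 of Gustafsson--Sebbar) valid for \emph{any} domain with the two-sided tangent-disc property:
\[
\log\Bigl(1-\tfrac{2\delta(z)}{2R+\delta(z)}\Bigr) < \log\Bigl|\tfrac{z'-p}{z-p}\Bigr| - G(z,p) < \log\Bigl(1+\tfrac{2\delta(z)}{2R+\delta(z)}\Bigr),
\]
where $z'$ is the reflection of $z$ across $\partial D$. This holds with the same reference quantity $\log\bigl|\frac{z'-p}{z-p}\bigr|$ for both $G=G_D$ and $G=G_{U\cap D}$; subtracting the two instances cancels the reference term and yields the stated bound directly. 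The key idea you are missing is this common comparison function against which both Green's functions are sandwiched---your one-sided M\"obius estimate supplies only half of such a sandwich for only one of the two domains.
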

\begin{proof}
The first inequality is a consequence of the maximum principle and holds
for any neighbourhood of $p_0$. For the second inequality, choose a
neighbourhood $U$ of $p_0$ such that $\psi$ is defined on the closure of $U$. Then $U \cap D$
and $D$ share a common smooth piece of boundary, say $\ga$ containing $p_0$.
Choose and fix $R>0$ such that for each point $\z \in \ga$, it is possible to
draw a pair of balls each of radius $R$ and tangent to $\ga$ at $\z$ such that
one of them lies in $U \cap D$ (and hence in $D$) and the other lies outside $D$
(and hence outside $U \cap D$). If $U$ is sufficiently small, then the distance
$\de(z)$ of a point $z \in U \cap D$ to $\pa D$ is realized by a unique point
$\pi(z)
\in \ga$, i.e., $\de(z) = \big\vert z - \pi(z)\big\vert$. Let $z^{\prime}$ be the
`symmetric point' of $z$ with respect to $\ga$, i.e., $\pi(z)=(z+z^{\prime})/2$.
By Theorem 4.4 of \cite{GS}, we have for $z,p \in U \cap D$,
\begin{align*}
\log\left(1-\frac{2\de(z)}{2R+\de(z)}\right) < \log \left
\vert\frac{z^{\prime}-p}{z-p} \right \vert - G(z,p) <
\log\left(1+\frac{2\de(z)}{2R+\de(z)}\right)
\end{align*}
where $G=G_{U \cap D}$ or $G_{D}$. Therefore, $G_D(z,p) - G_{U \cap D}(z,p)$ is
bounded above by
\[
\log\left(1+\frac{2\de(z)}{2R+\de(z)}\right)-\log\left(1-\frac{2\de(z)}{
2R+\de(z)}\right) = 2\log\frac{2R+3\de(z)}{2R-\de(z)},
\]
which establishes the second inequality.
\end{proof}

Now choose $U$ and $R$ as in the above lemma where we may assume without loss of
generality that $U \cap D$ is simply connected. Also assume that $p_j \in U
\cap D$ for all $j$. Let $\ti D= U \cap D$ and $\ti D_j=T_j(U \cap D)$. By
similar arguments as earlier, the domains $\ti D_j$ converge to the half plane
$\mathcal{H}$ in (\ref{half-p}) in the Hausdorff sense. Let $\ti G_j$ be the
Green function for $\ti D_j$. In view of the above lemma, the convergence of
$G_j$ is controlled by the convergence of $\ti G_j$. Indeed, if $K$ is a compact
subset of $\mathcal{H}\setminus \{p\}$, then $K \subset \ti D_j \subset D_j$ and $T_j^{-1}(K) \subset U \cap D$ for all large $j$. Therefore, for all $z\in K$,
\[
0<G_{D}(T_j^{-1}z,T_j^{-1}p)-G_{\ti D}(T_j^{-1}z, T_j^{-1}p) < 2
\log\frac{2R+3\de(T_j^{-1}z)}{2R-\de(T_j^{-1}z)},
\]
which is equivalent to
\[
0<G_j(z,p)-\ti G_j(z,p) < 2 \log\frac{2R+3\de(T_j^{-1}z)}{2R-\de(T_j^{-1}z)}.
\]
Since
\[
T_j^{-1}z=p_j-\psi(p_j)z \to p_0
\]
uniformly on $K$, $\de(T_j^{-1}z) \to 0$ uniformly on $K$. It follows
that $G_j(z,p) - \ti G_j(z,p) \to 0$ uniformly on $K$. This means that
the first assertion of Proposition \ref{conv-G_j} would be proved if we have the following:

\begin{prop}
For every $p \in \mathcal{H}$, $\big\{\ti G_j (z,p)\big\}$ converges
uniformly on compact subsets of $\mathcal{H} \setminus \{p\}$ to
$G_{\mathcal{H}}(z,p)$.
\end{prop}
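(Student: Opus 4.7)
The plan is to reduce the statement to Carath\'eodory's kernel convergence theorem. Since $\ti D = U \cap D$ was chosen simply connected, each $\ti D_j = T_j(\ti D)$ is a simply connected proper subdomain of $\mbb C$, and so is $\mathcal H$. Fix $p \in \mathcal H$; Hausdorff convergence $\ti D_j \to \mathcal H$ gives $p \in \ti D_j$ for all large $j$. Let $\phi_j : \ti D_j \to \mbb D$ and $\phi_\infty : \mathcal H \to \mbb D$ be the unique Riemann maps normalized by $\phi_j(p) = \phi_\infty(p) = 0$ with positive derivative at $p$. Since the Green's function of the disc with pole at the origin is $-\log|w|$, conformal invariance yields
\[
\ti G_j(z,p) = -\log \vert \phi_j(z) \vert, \qquad G_{\mathcal H}(z,p) = -\log \vert \phi_\infty(z) \vert,
\]
so it suffices to prove that $\phi_j \to \phi_\infty$ locally uniformly on $\mathcal H$.

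To apply Carath\'eodory's theorem, I would verify that $\ti D_j$ converges to $\mathcal H$ in the kernel sense relative to $p$. Two conditions need checking: (a) every compact $K \subset \mathcal H$ is contained in $\ti D_j$ for all large $j$, which is immediate from Hausdorff convergence; and (b) if $V \ni p$ is any connected open set such that every compact subset of $V$ eventually lies in $\ti D_j$, then $V \subset \mathcal H$. For (b), a point $w \notin \ov{\mathcal H}$ has a neighborhood disjoint from $\ov{\mathcal H}$, and the $C^2$-convergence $\psi_j \to \psi_\infty$ on compact sets established in the preceding discussion forces the complements $\mbb C \sm \ti D_j$ to swallow this neighborhood for large $j$, precluding $w \in V$. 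Carath\'eodory's theorem then gives $\phi_j \to \phi_\infty$ locally uniformly on $\mathcal H$. Finally, $\phi_\infty$ vanishes only at $p$, so on any compact $K \subset \mathcal H \sm \{p\}$ both $\vert \phi_\infty \vert$ and (for large $j$) $\vert \phi_j \vert$ are bounded below by a positive constant; hence $-\log \vert \phi_j \vert \to -\log \vert \phi_\infty \vert$ uniformly on $K$, proving the proposition.

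The main obstacle is the kernel convergence step: in general, Hausdorff convergence of simply connected domains does not imply Carath\'eodory kernel convergence, because the limiting object may develop slits or other thin exclusions. In the present situation this pathology is ruled out because the portion of $\pa \ti D_j$ near $T_j(p_0)$ is the zero set of the defining function $\psi_j$, which converges in $C^2$ on compacta to $\psi_\infty$; so $\pa \ti D_j$ and $\pa \mathcal H$ match up, up to smooth perturbations, on any fixed compact set, while the remaining part of $\pa \ti D_j$ is pushed to infinity under the dilation $T_j$ and plays no role in the limit. An alternative route, should one wish to avoid Carath\'eodory's theorem, would be a Perron/normal-families argument: the functions $H_j(z,p) = \ti G_j(z,p) + \log\vert z-p\vert$ form a locally bounded family of harmonic functions whose only possible limits satisfy the defining properties of $H_{\mathcal H}(\cdot,p)$ after a barrier argument using the smoothness of $\pa \mathcal H$.
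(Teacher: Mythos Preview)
Your proof is correct and follows essentially the same strategy as the paper: both exploit simple connectivity to write $\ti G_j(z,p) = -\log\vert\phi_j(z)\vert$ for Riemann maps $\phi_j$ and then argue that $\phi_j \to \phi_\infty$ locally uniformly on $\mathcal H$. The only difference is packaging --- you invoke Carath\'eodory's kernel convergence theorem, whereas the paper carries out the underlying normal-families argument by hand (showing that both $\{\phi_j\}$ and $\{\phi_j^{-1}\}$ are normal and that any pair of subsequential limits are mutual inverses, hence conformal).
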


\begin{proof}
Since $\ti D$ is simply connected and $T_j$ is an affine map, $\ti D_j$ is also
simply connected. Choose a conformal map $\phi_j$ from $\ti D_j$ onto the unit
disc $\mathbb D$ such that $\phi_j(p)=0$. Then
\[
\ti G_j(z,p)= -\log \big\vert \phi_j(z) \big\vert.
\]
Also $\{\phi_j\}$ is a normal family and every limit map is defined on $\mathcal H$ since
$\ti D_j$ converges to $\mathcal{H}$ in the Hausdorff sense. Let $\phi$ be a limit map of some subsequence of
$\phi_j$. Note that $\phi(p) = 0$ as $\phi_j(p) = 0$ for all $j$. Furthermore, $\phi(\mathcal H) \subset \overline{\mathbb D}$.
But if $\big\vert \phi(a) \big\vert = 1$ for some $a \in \mathcal H$, the maximum principle implies that $\vert \phi \vert \equiv 1$ on $\mathcal H$.
This contradicts the fact that $\phi(p) = 0$. Hence $\phi : \mathcal H \to \mathbb D$. The inverses $\phi_j^{-1}$ too form a normal family on $\mathbb D$.
To see this, let $S \subset \mathbb C \setminus \overline{\mathcal H}$ be a compact set with more than $2$ points. Since the defining functions for $\ti D_j$ converge
to that for $\mathcal H$ in the $C^2$-topology on $S$, it follows that $S$ lies outside the closure of $\ti D_j$ for all large $j$. Hence the family $\{\phi_j^{-1}\}$ misses
at least $2$ points which implies normality. Let $\varphi : \mathbb D \to \overline{\mathcal H}$ be some limit point of this family. Note that $\varphi(0) = p$. Since $\mathcal H$ is conformally equivalent to $\mathbb D$, a similar argument involving the maximum principle shows that $\varphi (\mathbb D) \subset \mathcal H$. Furthermore, since
\[
\phi_j \circ \phi_j^{-1}(z) = \phi_j^{-1} \circ \phi_j(z) = z
\]
for each $j$, it follows that $\phi \circ \varphi(z) = \varphi \circ \phi(z) = z$. This shows that each limit point $\phi : \mathcal{H} \to \mathbb D$ is conformal, $\phi(p) = 0$ and therefore is the Riemann map for $\mathcal{H}$ up to a rotation. Therefore,
\[
\ti G_j(z,p) = -\log \big\vert \phi_j(z) \big\vert  \to - \log \big\vert \phi(z)\big\vert = G_{\mathcal{H}}(z,p)
\]
uniformly on compact subsets of $\mathcal{H} \setminus \{p\}$ as desired.
\end{proof}

\medskip

For the second assertion of Proposition \ref{conv-G_j}, let $B(p,r)$ be a disc that is compactly contained in $\mathcal{H}$. Then
it is compactly contained in $D_j$ for all large $j$. By (\ref{repn-La-1}),
\[
\La_j(p)=\log r + \frac{1}{2\pi }\int_{-\pi}^{\pi} G_j(p+ re^{i\theta}, p) \, d\theta.
\]
Since $G_j(z,p)$ converges uniformly on $\pa B(p, r)$ to $G_{\mathcal{H}}(z,p)$,
the integral above converges to
\[
\log r +  \frac{1}{2\pi }\int_{-\pi}^{\pi} G_{\mathcal{H}}(p+re^{i\theta}, p) \,
d\theta = \La_{\mathcal{H}}(p),
\]
as required.

By taking $p = 0 \in \mathcal H$ and using (1.2), it follows that
\[
\La_D(p_j) - \log\big(-\psi(p_j)\big) = \La_j(0) \to \La_{\mathcal H}(0) = -\log \big\vert \pa \psi(p_0) \big\vert,
\]
as $p_j \to p_0$ which proves the first claim in Theorem 1.3 (i). Exponentiating this gives
\begin{equation}
c_D(p_j)\big(-\psi(p_j)\big) \to \big\vert \pa \psi(p_0) \big\vert,
\end{equation}
as $p_j \to p_0$. Before calculating the
higher order boundary asymptotics of $c_D$, here is an observation that was also noted in \cite{GS}. This gives an alternate but equivalent formulation of (2.4).

\begin{cor}
Let $D$ be as in Theorem 1.3. Then for any $a \in D$
\begin{equation}\label{La-sinh}
\La(p_j)+\log\frac{\big\vert\pa G_D(p_j,a)\big\vert}{\text{\em sinh}
\, G_D(p_j,a)} \to 0
\end{equation}
as $p_j \to p_0 \in \pa D$.
\end{cor}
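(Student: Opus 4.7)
The plan is to split the expression into two pieces whose boundary asymptotics cancel. From Theorem 1.3(i) we have $\La_D(p_j) = \log(-\psi(p_j)) - \log|\pa\psi(p_0)| + o(1)$, so the corollary reduces to showing
$$
\log\frac{|\pa G_D(p_j,a)|}{\sinh G_D(p_j,a)} = -\log(-\psi(p_j)) + \log|\pa\psi(p_0)| + o(1)
$$
as $p_j \to p_0$. Once this is established, adding the two displays gives the claim.

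The first ingredient is boundary regularity of $G_D(\cdot,a)$ near $p_0$. Since $a$ is an interior point of $D$, the harmonic function $H_D(\cdot,a)$ has the $C^2$ (in fact real-analytic) boundary values $\log|z-a|$ on a neighbourhood of $p_0$ in $\Ga$. Standard elliptic boundary regularity on a $C^2$ piece of boundary then gives that $G_D(\cdot,a) = -\log|z-a| + H_D(\cdot,a)$ extends $C^1$ up to $\Ga$ near $p_0$. Since $G_D(\cdot,a)$ is a positive harmonic function on $D$ vanishing on $\Ga$, the Hopf boundary point lemma shows that $\nabla G_D(p_0,a) \neq 0$ and is parallel to the inward normal at $p_0$. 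In particular, $|\pa G_D(p_j,a)| \to |\pa G_D(p_0,a)| > 0$.

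The second ingredient is the factorization $G_D(p,a) = \alpha(p)\cdot(-\psi(p))$ near $p_0$, where $\alpha$ is continuous with
$$
\alpha(p_0) = \frac{|\nabla G_D(p_0,a)|}{|\nabla\psi(p_0)|} = \frac{|\pa G_D(p_0,a)|}{|\pa\psi(p_0)|}.
$$
Indeed, both $G_D(\cdot,a)$ and $-\psi$ are $C^1$ real-valued functions vanishing on $\Ga$ with parallel, nonvanishing gradients (both pointing into $D$), and writing each as the product of $-\psi$ with a continuous factor via the fundamental theorem of calculus in boundary normal coordinates furnishes $\alpha$; its value at $p_0$ is the ratio of normal derivatives, in which the factor $\tfrac12$ relating $|\pa|$ and $|\nabla|$ cancels.

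Combining the two ingredients, and using $G_D(p_j,a) \to 0$ so that $\sinh G_D(p_j,a) = G_D(p_j,a)\bigl(1 + o(1)\bigr)$, we obtain
$$
\log\frac{|\pa G_D(p_j,a)|}{\sinh G_D(p_j,a)} = \log|\pa G_D(p_j,a)| - \log \alpha(p_j) - \log(-\psi(p_j)) + o(1),
$$
which converges to $\log|\pa\psi(p_0)| - \log(-\psi(p_j)) + o(1)$ by the limits already established. This is precisely what was needed, and adding $\La_D(p_j)$ yields the corollary. The main technical step is the factorization $G_D = \alpha\cdot(-\psi)$, which rests on the $C^1$ boundary regularity of $G_D(\cdot,a)$ together with the Hopf lemma; both are standard facts on a $C^2$ piece of boundary, and the remainder of the argument is bookkeeping.
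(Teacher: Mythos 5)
Your proof is correct, and it reaches the conclusion by a genuinely different (though ultimately equivalent) route. The paper's proof splits the expression as $\bigl(\La(p_j)-\log G_D(p_j,a)\bigr)+\log\vert\pa G_D(p_j,a)\vert+\log\bigl(G_D(p_j,a)/\sinh G_D(p_j,a)\bigr)$ and then makes one slick move: it applies Theorem 1.3(i) with $\psi=-G_D(\cdot,a)$ itself as the defining function, so the first term converges to $-\log\vert\pa G_D(p_0,a)\vert$ with no further work, and everything cancels. You instead keep a generic defining function $\psi$, apply Theorem 1.3(i) to it, and then bridge the gap with the comparison lemma $G_D(\cdot,a)=\alpha\cdot(-\psi)$, $\alpha(p_0)=\vert\pa G_D(p_0,a)\vert/\vert\pa\psi(p_0)\vert$. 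This comparison of defining functions is exactly the mechanism that makes the limit in Theorem 1.3(i) consistent across choices of $\psi$, so the two arguments have the same mathematical content; the paper's version is shorter because it outsources the comparison to the theorem. One small point in your favour: the paper's choice requires $-G_D(\cdot,a)$ to qualify as a $C^2$ defining function near $p_0$ (which on a merely $C^2$ piece $\Ga$ takes a word of justification beyond the $C^{1,\al}$ regularity that Kellogg-type results give directly), whereas your route only uses $C^1$ regularity of $G_D(\cdot,a)$ up to $\Ga$ together with the Hopf lemma, at the modest cost of the explicit factorization step. Both proofs invoke the Hopf lemma for the nonvanishing of $\pa G_D(p_0,a)$ and the elementary limit $\sinh x/x\to 1$; your bookkeeping checks out.
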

\begin{proof}
Note that
\begin{align*}
\La(p_j)+\log\frac{\big\vert \pa G_D(p_j,a) \big\vert}{\text{sinh} \, G_D(p_j,a)} = \big(\La(p_j)-\log
G_D(p_j,a)\big) + \log \big \vert\pa G_D(p_j,a) \big \vert + \log
\frac{G_D(p_j,a)}{\text{sinh} \, G_D(p_j,a)}.
\end{align*}
By taking $\psi = -G_D(z,a)$ as a smooth defining function for $\Ga$ near $p_0$ (the non-vanishing of the gradient of $G_D(z, a)$ follows from the Hopf lemma), the first
term converges to $-\log\big \vert \pa G_D(p_0,a)\big \vert$,
by the first assertion of Theorem \ref{c_D,c_n-asymp} (i) and the second term
converges to $\log\big\vert \pa G_D(p_0,a)\big \vert$, since $G_D(z, a)$ is smooth up to $\Gamma$. Finally, since $\sinh x/x \to 1$ as $x \to 0$,
the last term vanishes in the limit and hence (\ref{La-sinh}) follows.
\end{proof}

To calculate the boundary asymptotics of the derivatives of $\La_D$, note that
\begin{equation}\label{der-La_La_j}
\pa^{\al + \be}\La_D(p_j) \big(-\psi(p_j)\big)^{\al + \be} = \pa^{\al + \be}(0),
\end{equation}
and hence it is enough to study the convergence of the right hand side.

\begin{prop}\label{conv-H_j}
For every $p \in \mathcal{H}$ and $(\al, \be) \not= (0, 0)$
\[
\pa^{\al + \be}\La_j(p) \to \pa^{\al + \be}\La_{\mathcal{H}}(p),
\]
as $j \to \infty$.
\end{prop}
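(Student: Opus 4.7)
The plan is to apply the integral representation (2.2) to $\La_j$ with $q_0 = p$ and a fixed radius $r > 0$ chosen so that $\ov{B(p, r)}$ is compactly contained in $\mathcal{H}$. Since $D_j \to \mathcal{H}$ in the Hausdorff sense, $\ov{B(p, r)}$ is also compactly contained in $D_j$ for all large $j$. Formula (2.2) then reads
\[
\La_j(q) = \frac{1}{4\pi^2}\int_{[-\pi, \pi]^2} H_j(p + re^{i\theta}, p + re^{i\phi})\,K_r(q, \theta, \phi)\, d\theta\, d\phi
\]
for $q \in B(p, r/2)$, where the kernel $K_r$ is smooth in $q$ with all $q$-derivatives bounded uniformly in $(\theta, \phi)$, and is independent of $j$. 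The analogous identity holds for $\La_{\mathcal H}$. Differentiating under the integral sign at $q = p$ reduces the proposition to the claim
\[
\int_{[-\pi, \pi]^2}\!\! H_j(p + re^{i\theta}, p + re^{i\phi})\, L_{\al\be}(\theta, \phi)\, d\theta\, d\phi \;\longrightarrow\; \int_{[-\pi, \pi]^2}\!\! H_{\mathcal H}(p + re^{i\theta}, p + re^{i\phi})\, L_{\al\be}(\theta, \phi)\, d\theta\, d\phi,
\]
where $L_{\al\be}(\theta, \phi) := \pa^{\al+\be}_q K_r(q, \theta, \phi)\vert_{q=p}$ is a fixed bounded smooth weight.

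I will deduce this by dominated convergence, which requires pointwise convergence of $H_j$ to $H_{\mathcal H}$ on the product circle $\pa B(p, r) \times \pa B(p, r)$ together with a uniform $L^\infty$-bound on $H_j$ there. The pointwise part is a direct consequence of Proposition 2.2: off the diagonal $\theta = \phi$, decompose $H_j = G_j + \log\vert z - w\vert$, observe that the logarithmic term is $j$-independent, and apply Proposition 2.2 with $w = p + re^{i\phi}$ held fixed to obtain $G_j(z, w) \to G_{\mathcal H}(z, w)$ at the off-diagonal point; on the diagonal $\theta = \phi$, the identity $H_j(z, z) = \La_j(z)$ combined with the pointwise convergence $\La_j \to \La_{\mathcal H}$ (also Proposition 2.2) provides the limit.

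For the uniform $L^\infty$-bound on $H_j$, I plan to combine the localization Lemma 2.4 with the Riemann-map representation of $\ti G_j$ used in Proposition 2.5. A joint normal families argument for the Riemann maps $\phi_j(z; w) : \ti D_j \to \mbb{D}$, normalized by $\phi_j(w; w) = 0$ and $\phi_j'(w; w) > 0$, should show that these converge locally uniformly in $(z, w)$ to the corresponding Riemann map of $\mathcal{H}$, furnishing a uniform-in-$j$ bound on $\ti G_j(z, w) = -\log\vert\phi_j(z; w)\vert$ off the diagonal. The representation
\[
H_j = \log\bigl(\vert z - w\vert/\vert\phi_j(z; w)\vert\bigr) + (G_j - \ti G_j)
\]
together with Lemma 2.4 then extends the bound across the diagonal $z = w$, using that $\phi_j'(w; w)$ stays bounded and $G_j - \ti G_j \to 0$ for $T_j^{-1}$-preimages close to $\pa D$. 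The principal obstacle is establishing this joint (in both $z$ and $w$) uniform control, which is strictly stronger than the separate-in-$w$ information supplied directly by Proposition 2.2; once it is in hand, dominated convergence on the product circle yields $\pa^{\al+\be}\La_j(p) \to \pa^{\al+\be}\La_{\mathcal H}(p)$.
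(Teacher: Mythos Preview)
Your plan is workable, but it routes around the paper's main shortcut. The paper exploits the fact---recalled in the introduction---that $H_j(z,p)$, being separately harmonic in each variable, is \emph{jointly} harmonic on $D_j \times D_j \subset \mathbb R^4$. Once pointwise convergence $H_j \to H_{\mathcal H}$ on $\mathcal H \times \mathcal H$ is established (exactly as you do, via Proposition~\ref{conv-G_j}), joint harmonicity upgrades it to locally uniform convergence by the standard compactness machinery for harmonic functions: the solid mean value identity together with the pointwise inequality $H_j = G_j + \log|z-p| \ge \log|z-p|$ and the local integrability of $\log|z-p|$ over balls in $\mathbb R^4$ yields a uniform-in-$j$ lower bound on compacts; Harnack's inequality for the nonnegative harmonic functions $H_j + C$, combined with pointwise convergence at a single reference point, then supplies the matching upper bound; and interior gradient estimates give equicontinuity, hence uniform convergence. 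With $H_j \to H_{\mathcal H}$ uniformly on $\pa B(q_0,r)\times \pa B(q_0,r)$, one simply passes to the limit in the differentiated formula~(\ref{repn-La-2})---no dominated convergence step is needed. Your joint-in-$(z,w)$ normal-families argument for the Riemann maps $\phi_j(\cdot;w)$, coupled with Lemma~\ref{local}, would indeed produce the required uniform $L^\infty$ control and is correct, but it rebuilds by hand what joint harmonicity delivers essentially for free; the ``principal obstacle'' you flag dissolves once you regard $H_j$ as harmonic on the product domain rather than slice by slice.
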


\begin{proof}
Let $H_j(z,p)$ and $H_{\mathcal{H}}(z, p)$ be the regular parts of the Green's function for $D_j$ and
$\mathcal{H}$ respectively. Note that if $(z,p) \in \mathcal {H} \times \mathcal{H}$ and $z
\neq p$, then by Proposition 2.4,
\[
H_j(z,p)=G_j(z,p) +\log \vert z - p \vert \to G_{\mathcal{H}}(z,p)+\log \vert z
- p \vert = H_{\mathcal{H}}(z,p).
\]
Also, if $z = p$, then $H_j(p,p)=\La_j(p) \to \La_{\mathcal{H}}(p)=H(p,p)$ by Proposition 2.4 again. Thus $H_{j}(z,p) \to H_{\mathcal{H}}(z,p)$ pointwise. Since
$H_j(z,p)$ is jointly harmonic, the mean value property implies that the convergence is also uniform on compact
subsets of $\mathcal{H} \times \mathcal{H}$.

\medskip

Let $B(q_0,r)$ be compactly contained in $\mathcal{H}$ such that $p \in B(q_0, r)$. Then this disc is
compactly contained in $D_j$ for all large $j$. By (2.2),
\[
\La_j(p)=\frac{1}{4\pi^2} \int_{[-\pi,\pi]^2}
H_j(q_0 + re^{i\theta},q_0 + re^{i\phi}) \frac{\big(r^2- \vert p - q_0\vert^2\big)^2}
{\vert q_0+re^{i\theta} - p \vert^2 \vert q_0+re^{i\phi} - p \vert^2}\, d\theta\,d\phi.
\]
Differentiating with respect to $p$ under the integral sign,
\[
\pa^{\al+\be}\La_j(p)=\frac{1}{4\pi^2} \int_{[-\pi,\pi]^2}
H_j(q_0 + re^{i\theta},q_0 + re^{i\phi}) \pa^{\al+\be} \frac{\big(r^2- \vert p - q_0\vert^2\big)^2}
{\vert q_0+re^{i\theta} - p \vert^2 \vert q_0+re^{i\phi} - p \vert^2}\, d\theta\,d\phi.
\]
Since $H_j(z,p)$ converges uniformly on
$\pa B(q_0,r) \times \pa B(q_0, r)$ to $H_{\mathcal{H}}(z,p)$, the above
integral converges to
\[
\frac{1}{4\pi^2} \int_{[-\pi,\pi]^2}
H_{\mathcal{H}}(q_0 + re^{i\theta},q_0 + re^{i\phi}) \pa^{\al+\be} \frac{\big(r^2- \vert p - q_0\vert^2\big)^2}
{\vert q_0+re^{i\theta} - p \vert^2 \vert q_0+re^{i\phi} - p \vert^2}\, d\theta\,d\phi = \pa^{\al+\be}\La_{\mathcal{H}}(p),
\]
as required.
\end{proof}

By taking $p = 0 \in \mathcal{H}$, this proposition shows that
\[
\pa^{\al+ \be}\La (p_j) \big(-\psi(p_j)\big)^{\al+\be} =\pa^{\al + \be}\La_j(0) \to \pa^{\al + \be}\La_{\mathcal{H}}(0),
\]
as $j \ra \infty$, and it remains to see that 
\[
\pa^{\al + \be}\La_{\mathcal{H}}(0) = -(\al+\be-1)!\big(\pa \psi(p_0)\big)^{\al} \big(\ov \pa \psi(p_0) \big)^{\be},
\]
by (1.2). This finishes the proof of Theorem 1.3 (i).

\medskip

Theorem 1.3 (ii) is a statement about the behavior of the coefficients $c_{n, D}$ near $\Ga$. It requires the following preliminary representation of these coefficients in terms of $H_D(z, p)$.

\begin{lem}\label{c_n-formula}
Let $D \subset \mathbb C$ be a regular domain. For every disc
$B(q_0,r)$ which is compactly contained in $D$,
\[
c_{n, D}(q)=\frac{1}{2\pi^2n!}\int_{[-\pi,\pi]^2} \pa^n H_D(q_0+re^{i\theta}, q_0+re^{i\phi}) \frac{(r^2-\vert q-q_0\vert^2)^2}{\vert
q_0+re^{i\theta}-q\vert^2 \vert q_0+re^{i\phi}-q \vert^2} d \theta d\phi, \quad
q \in B(q_0, r),
\]
where $\pa^n=\pa/\pa z^n$ is the derivative with respect to the first variable.
\end{lem}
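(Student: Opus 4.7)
The plan is to mimic the proof of the representation (2.2) for $\La_D$ given in Proposition 2.1, with $H_D$ replaced in the first slot by its $n$-th holomorphic derivative $\pa^n H_D$. The starting point is an identity expressing $c_{n,D}$ in terms of $H_D$ alone. Since $h_D(\cdot, p)$ is holomorphic in $z$ and has real part $H_D(\cdot, p)$, the Cauchy-Riemann equations give $\pa h_D(z, p) = 2\,\pa H_D(z, p)$; iterating in $z$ yields
\[
\pa^n h_D(z, p) = 2\,\pa^n H_D(z, p), \quad n \ge 1.
\]
Combined with the Taylor expansion $h_D(z, p) = \sum_{k \ge 0} c_k(p)(z-p)^k$, evaluation at $z = p$ produces
\[
c_{n, D}(q) = \frac{1}{n!}\,\pa^n h_D(z, q)\big|_{z=q} = \frac{2}{n!}\,\pa^n H_D(z, q)\big|_{z=q},
\]
so the lemma reduces to a double Poisson representation of the diagonal value $\pa^n H_D(z, q)\big|_{z=q}$.

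Next I would observe that $\pa^n H_D(z, p)$ is jointly smooth on $\ov{B(q_0, r)}\times \ov{B(q_0, r)}$ (by joint real-analyticity of $H_D$), holomorphic in $z$ for fixed $p$ (hence componentwise harmonic in $z$), and harmonic in $p$ for fixed $z$ (since $\pa$ commutes with $\De_p$). These are precisely the two harmonicity properties that made the two-slot Poisson argument behind (2.2) work, and the same argument will now go through with $\pa^n H_D$ in place of $H_D$.

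Fix $q \in B(q_0, r)$. Applying the Poisson integral formula in the first slot to the harmonic function $\pa^n H_D(\cdot, p)$ yields, for every $p \in D$,
\[
\pa^n H_D(q, p) = \frac{1}{2\pi}\int_{-\pi}^{\pi} \pa^n H_D\big(q_0 + re^{i\theta}, p\big)\,\frac{r^2 - |q - q_0|^2}{|q_0 + re^{i\theta} - q|^2}\, d\theta.
\]
Setting $p = q$ and then applying Poisson in the second slot to the harmonic function $\pa^n H_D(q_0 + re^{i\theta}, \cdot)$, followed by Fubini (justified by continuity of $\pa^n H_D$ on the compact torus), I obtain
\[
\pa^n H_D(z, q)\big|_{z=q} = \frac{1}{4\pi^2}\int_{[-\pi,\pi]^2} \pa^n H_D\big(q_0 + re^{i\theta},\,q_0 + re^{i\phi}\big)\,\frac{(r^2 - |q - q_0|^2)^2}{|q_0 + re^{i\theta} - q|^2\,|q_0 + re^{i\phi} - q|^2}\, d\theta\, d\phi.
\]
Multiplying both sides by $2/n!$ and using $\tfrac{2}{n!}\cdot \tfrac{1}{4\pi^2} = \tfrac{1}{2\pi^2 n!}$ delivers the asserted formula.

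The whole argument is essentially routine once the Cauchy-Riemann identity $\pa^n h_D = 2\,\pa^n H_D$ is in hand; this is the one conceptual ingredient, and the only point requiring any real care. The remainder is a direct transcription of the derivation of (2.2) with an extra $\pa^n$ in the integrand, so no further obstacle of substance is anticipated.
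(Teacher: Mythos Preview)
Your proof is correct and follows essentially the same approach as the paper: establish $c_{n,D}(q)=\tfrac{2}{n!}\pa^n H_D(q,q)$ via Cauchy--Riemann, note that $\pa^n H_D(z,p)$ is harmonic in each variable separately, and then apply the Poisson integral formula twice together with Fubini. Your write-up is somewhat more explicit about the Cauchy--Riemann step and the justification of separate harmonicity, but the structure is identical.
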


\begin{proof}
We have
\[
c_{n, D}(p)=\frac{1}{n!}\pa^n h_D(p,p)=\frac{2}{n!}\pa^n H_D(p,p).
\]
For a fixed $p$, $H_D(z,p)$ is harmonic in $z$ and hence so is $\pa^n H_D(z,p)$.
For a fixed $z$, $H_D(z,p)$ is harmonic in $p$, and by its joint smoothness, it follows that
$\pa^n H_D(z,p)$ is harmonic in $p$ as well. A
repeated application of the Poisson integral formula to the function $\pa^n
H(z,p)$ together with Fubini's theorem completes the proof.
\end{proof}

\begin{prop}\label{conv-c_n}
For a fixed $p \in H$, let $c_{n, j}$ be the coefficients in the expansion of $h_{D_j}$ around $p$. Then for every $n \ge 1$ and non-negative integers $\al, \be$,
\[
\pa^{\al+\be}c_{n, j}(p) \to \pa^{\al+\be}c_{n, \mathcal{H}}(p),
\]
as $j \to \infty$.
\end{prop}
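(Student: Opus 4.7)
The plan is to repeat the proof of Proposition 2.6, but with the representation formula (2.2) for $\La_j$ replaced by the representation of $c_{n,j}$ furnished by Lemma 2.6. Fix $p \in \mathcal H$ and choose a disc $B(q_0, r)$ with $p \in B(q_0, r)$ and $\ov{B(q_0, r)} \subset \mathcal H$. Since $D_j \to \mathcal H$ in the Hausdorff sense, $\ov{B(q_0, r)}$ is compactly contained in $D_j$ for all sufficiently large $j$. Lemma 2.6 then gives
\[
c_{n, j}(q) = \frac{1}{2\pi^2 n!} \int_{[-\pi, \pi]^2} \pa^n H_j(q_0 + re^{i\theta}, q_0 + re^{i\phi})\, P(q; \theta, \phi)\, d\theta\, d\phi, \qquad q \in B(q_0, r),
\]
where $P(q; \theta, \phi)$ abbreviates the bi-Poisson-type kernel appearing in Lemma 2.6. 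Because $P$ is smooth in $q$ on the open disc and $\pa^n H_j$ is continuous on the compact torus $\pa B(q_0, r) \times \pa B(q_0, r)$, we may differentiate under the integral sign in $q$ and evaluate at $p$ to obtain
\[
\pa^{\al + \be} c_{n, j}(p) = \frac{1}{2\pi^2 n!} \int_{[-\pi, \pi]^2} \pa^n H_j(q_0 + re^{i\theta}, q_0 + re^{i\phi})\, \pa_q^{\al + \be} P(p; \theta, \phi)\, d\theta\, d\phi,
\]
and an identical formula for $\pa^{\al + \be} c_{n, \mathcal H}(p)$.

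The remaining task is to promote the uniform convergence $H_j \to H_{\mathcal H}$ on compact subsets of $\mathcal H \times \mathcal H$ (which was established inside the proof of Proposition 2.6) to uniform convergence $\pa^n H_j \to \pa^n H_{\mathcal H}$ on compact subsets. For each fixed $w \in \mathcal H$, $H_j(\cdot, w)$ is harmonic in $z$, so $\pa H_j(\cdot, w)$ is holomorphic; indeed $2\pa^n H_j(\cdot, w)$ equals the $n$-th $z$-derivative of the holomorphic function $h_j(\cdot, w)$, and may equivalently be recovered from $H_j$ via differentiation of the Poisson integral on a small circle. By standard interior estimates for harmonic functions (or the Cauchy integral formula applied to the holomorphic function $\pa H_j$), uniform convergence of $H_j$ on a compact set implies uniform convergence of any fixed partial derivative on slightly smaller compact sets; since $H_j$ is jointly harmonic on $\mathcal H \times \mathcal H$, the same argument works jointly in $(z, w)$. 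Hence $\pa^n H_j \to \pa^n H_{\mathcal H}$ uniformly on $\pa B(q_0, r) \times \pa B(q_0, r)$.

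With this in hand, the integrand in the formula for $\pa^{\al + \be} c_{n, j}(p)$ converges uniformly on the compact set $[-\pi, \pi]^2$ to that for $\pa^{\al + \be} c_{n, \mathcal H}(p)$, since $\pa_q^{\al + \be} P(p; \theta, \phi)$ is a fixed continuous function of $(\theta, \phi)$. Interchanging limit with integration then yields the claimed convergence. The main (and only non-routine) obstacle is the derivative-upgrading step: transferring uniform convergence of $H_j$ to uniform convergence of $\pa^n H_j$. This is resolved by the key observation that $\pa^n H_j$ is essentially an $n$-th $z$-derivative of the holomorphic regular part $h_j$, which makes standard complex-analytic interior estimates directly applicable.
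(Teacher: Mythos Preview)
Your proposal is correct and follows essentially the same route as the paper: apply the integral representation of Lemma~\ref{c_n-formula} on a disc $B(q_0,r)\Subset\mathcal H$, differentiate under the integral in $q$, and then pass to the limit using uniform convergence of $\pa^n H_j\to\pa^n H_{\mathcal H}$ on $\pa B(q_0,r)\times\pa B(q_0,r)$. The paper simply asserts this last convergence, whereas you spell out that it follows from the uniform convergence $H_j\to H_{\mathcal H}$ on compacta (proved inside Proposition~\ref{conv-H_j}) together with interior estimates for the jointly harmonic $H_j$; this added justification is welcome but does not change the argument.
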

\begin{proof}
Let $B(q_0,r)$ be a disc that is compactly contained in $\mathcal{H}$ such that $p \in B(q_0, r)$.
This disc is then compactly contained in $D_j$ for all large $j$. We have by Lemma \ref{c_n-formula},
\[
c_{n, j}(p) = \frac{1}{2\pi^2n!}\int_{[-\pi,\pi]^2} \pa^n H_j (q_0+re^{i\theta}, q_0+re^{i\phi}) \frac{(r^2-\vert p-q_0\vert^2)^2}{\vert
q_0+re^{i\theta}-p\vert^2 \vert q_0+re^{i\phi}-p \vert^2} d \theta d\phi,
\]
and by differentiating with respect to $p$ under the integral sign,
\[
\pa^{\al+\be}c_{n, j}(p) = \frac{1}{2\pi^2n!}
\int_{[-\pi,\pi]^2} \pa^n H_j(q_0+re^{i\theta}, q_0+re^{i\phi}) \pa^{\al+\be}\frac{(r^2-\vert p-q_0\vert^2)^2}{\vert
q_0+re^{i\theta}-p\vert^2 \vert q_0+re^{i\phi}-p \vert^2} d \theta d\phi.
\]
Since $\pa^n H_j(z,p)$ converges uniformly on $\pa B(q_0, r) \times \pa B(q_0, r)$ to $\pa^n H_{\cal H}(z,p)$,
the above integral converges to
\[
\frac{1}{2\pi^2n!}\int_{[-\pi,\pi]^2} \pa^n H_{\mathcal{H}}(q_0+re^{i\theta}, q_0+re^{i\phi}) \pa^{\al+\be}\frac{(r^2-\vert p-q_0\vert^2)^2}{\vert
q_0+re^{i\theta}-p\vert^2 \vert q_0+re^{i\phi}-p \vert^2} d \theta d\phi =
\pa^{\al+\be}c_{n, \mathcal{H}}(p),
\]
as desired.
\end{proof}

To finish the proof of Theorem 1.3 (ii), note that
\[
H_D(z,p)=H_j\big(T_j(z), T_j(p)\big)+\log\big(-\psi(p_j)\big),
\]
which implies that
\[
c_{n, D}(p)=\frac{2}{n!}\pa^n H_D(p,p)= \frac{2}{n!} \pa^n H_j\big(T_j(p),T_j(p)\big)
\frac{1}{\big(-\psi(p_j)\big)^n} = c_{n, j}\big(T_j(p)\big)
\frac{1}{\big(-\psi(p_j)\big)^n}.
\]
By differentiating with respect to $p$,
\[
\pa^{\al+\be}c_{n, D}(p_j)\big(-\psi(p_j)\big)^{n+\al+\be}=\pa^{\al+\be}c_{n, j}(0).
\]
By Proposition 2.8, the right side converges to
\[
\pa^{\al+\be}c_{n, \mathcal{H}}(0) = -\frac{(n+\al+\be-1)!}{n!}\big(\pa
\psi(p_0)\big)^{n+\al}\big(\ov \pa \psi(p_0) \big)^{\be},
\]
from (1.4) as desired.


\subsection{The normalised Robin function} Let $D \subset \mathbb C$ be a smoothly bounded domain with smooth defining function $\psi$ which will be assumed to be defined on all of $\mbb C$. The normalised Robin function $\la$ for $D$ is defined as
\[
\la(p) = \La(p) - \log\big(-\psi(p)\big)
\]
and this is continuous up to $\ov D$ by Theorem 1.3 (i). Associated with each $p \in D$ is the affine map
\[
T_p(z) = \frac{z - p}{-\psi(p)}
\]
and if $D(p) = T_p(D)$, then $\la(p)$ is the Robin constant for the domain $D(p)$ at the origin. Note that $0 \in D(p)$ for all $p \in D$. This interpretation reveals that $\La$ satisfies stronger boundary asymptotics than those listed in Theorem 1.3. Before discussing this, note that the higher dimensional analogue of the normalised Robin function was studied by Levenberg-Yamaguchi. Locally, $p \mapsto D(p)$ is a smooth variation of domains given by (see \cite{LY})
\[
f(p,z) = 2 \Re  \left( \int_0^1 z \frac{\pa \psi}{\pa z}\big(p - \psi(p)tz\big) \; dt \right) - 1.
\]
By Hadamard's first variation formula (see for example \cite{MY}),
\[
\pa \la(p)=-\frac{1}{\pi}\int_{\pa D(p)} k_1(p,z) \left \vert \frac{\pa g}{\pa z}(p,z) \right\vert^2 \, ds,
\]
where
\[
k_1=\frac{\pa f}{\pa p}\bigg/\frac{\pa f}{\pa z},
\]
$g(p,z)$ is the Green function for $D(p)$ with pole at $0$, and $ds$ is the arc length measure (and not to be confused with the capacity metric as it will be clear from the context). Since $-g(p, z)$ is a defining function for $\pa D(p)$, a normal to $\pa D(p)$ is $-2\pa g /\pa \ov z$, whence $-2i\pa g/\pa \ov z$ is tangent to $\pa D(p)$. Now comparing $dz=z^{\prime}(t)dt$ and $ds=\big\vert z^{\prime}(t)\big\vert dt$, we have
\[
\left \vert \frac{\pa g}{\pa z} \right \vert \,ds= i\frac{\pa g}{\pa z} \,dz,
\]
along $\pa D(p)$, and so the variation formula can also be written as
\[
\pa \la(p)= -\frac{i}{\pi}\int_{\pa D(p)} k_1(p,z) \left \vert \frac{\pa g}{\pa z}(p,z) \right \vert \frac{\pa g}{\pa z}(p,z) \,dz.
\]

\begin{thm}\label{reg-norm-robin}
The normalised Robin function $\la \in C^1(\overline D)$.
\end{thm}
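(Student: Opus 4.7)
Interior regularity of $\la$ is automatic: $\La$ is real analytic on $D$ and $\psi$ is smooth, so $\la \in C^{\infty}(D)$. Theorem 1.3 (i) already provides continuity of $\la$ on $\ov D$, with boundary value $p_0 \mapsto -\log \vert \pa \psi(p_0) \vert$, which is continuous on $\pa D$ since $d\psi \neq 0$ there. Because $\la$ is real valued, $\ov\pa \la = \ov{\pa \la}$, so it suffices to show that the complex derivative $\pa \la$ extends continuously to $\ov D$, and then $\la \in C^1(\ov D)$ follows.

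The central tool is the Hadamard first variation formula just displayed,
\[
\pa \la(p) = -\frac{1}{\pi}\int_{\pa D(p)} k_1(p, z) \left\vert \frac{\pa g}{\pa z}(p, z) \right\vert^2 ds,
\]
where $D(p) = T_p(D)$ and $g(p, z) = G_{D(p)}(z, 0)$. My plan is to prove that the right hand side converges, as $p \to p_0 \in \pa D$, to the analogous integral over the boundary of the half plane $\mathcal H_{p_0} = \{z : 2 \Re(\pa\psi(p_0)z) < 1\}$, and that the limit depends continuously on $p_0$. Section 2.1 already gives the essential convergences: the domains $D(p) \to \mathcal H_{p_0}$ in the Hausdorff sense; the defining functions $f(p,\cdot) \to f_\infty(z) := 2\Re(\pa\psi(p_0)z) - 1$ in $C^2$ on compact sets of $\mathbb C$; and, by Proposition 2.4, the Green's functions $g(p, \cdot) \to G_{\mathcal H_{p_0}}(\cdot, 0)$ uniformly on compact subsets of $\mathcal H_{p_0} \setminus \{0\}$. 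Since these functions are harmonic off $\{0\}$ and the boundaries vary smoothly, the convergence is easily upgraded to $C^k$ convergence on compact subsets of $\ov{\mathcal H_{p_0}} \setminus \{0\}$ for every $k$, combining interior mean value estimates with standard boundary regularity for the Dirichlet problem. The ratio $k_1(p,z)$ has a continuous limit $k_1^\infty(z)$ as well, because $\pa_z f_\infty \equiv \pa\psi(p_0)$ is a nonzero constant.

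The principal obstacle is that $\pa \mathcal H_{p_0}$ is an unbounded line, so the limiting integral lives over a non-compact set and pointwise convergence of the integrand is not enough. For the half plane, Example 1.2 gives the explicit formula $G_{\mathcal H_{p_0}}(z, 0) = \log \vert z - 1/\ov{\pa\psi(p_0)} \vert - \log \vert z \vert$, from which $\vert \pa_z G_{\mathcal H_{p_0}}(z, 0) \vert$ decays like $\vert z \vert^{-2}$ at infinity, while $k_1^\infty(z)$ grows at worst polynomially in $\vert z \vert$ from the integral formula for $f$; this yields an absolutely integrable majorant for the limit integrand. To lift this to a uniform integrable majorant as $p$ varies in a neighborhood of $p_0$ in $\ov D$, one compares $g(p, z)$ with the half plane model by means of the conformal maps $\phi_j$ used in the proof of Proposition 2.4 (and their normal family bounds), localizing with Lemma 2.3 to control the difference between $g$ and the Green's function of the large disc $\ti D_j$ near the boundary.

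Dominated convergence then gives
\[
\lim_{p \to p_0} \pa \la(p) = -\frac{1}{\pi} \int_{\pa \mathcal H_{p_0}} k_1^\infty(z) \left\vert \frac{\pa G_{\mathcal H_{p_0}}}{\pa z}(z, 0) \right\vert^2 ds,
\]
and the right hand side depends continuously on $p_0$ through the smooth parameter $\pa\psi(p_0)$. This produces the continuous extension of $\pa\la$ to $\ov D$ and completes the proof. The hardest step is the uniform tail estimate for the boundary integral; all other pieces are combinations of the rescaling machinery of Section 2.1, smoothness of the Green's function up to the boundary, and the explicit form of the half plane Green's function.
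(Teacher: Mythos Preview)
Your overall strategy matches the paper's: both start from the Hadamard first variation formula for $\pa\la$ and both recognise that the essential difficulty is the non-compactness of the limiting boundary $\pa\mathcal H_{p_0}$. The difference is in how that difficulty is handled.

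The paper does \emph{not} work directly on the unbounded line. Instead it applies the M\"obius map $M(z)=(z+1/2)/(z-3/2)$ sending $\mathcal H$ onto $\mathbb D$; the rescaled domains $D_j=D(p_j)$ become bounded domains $\Om_j=M(D_j)$ converging to $\mathbb D$, and the whole of $\pa D_j$ (including the ``far'' boundary components of the multiply connected $D$) is mapped to a neighbourhood of the single point $w=1$. After the change of variables the integrand is $k_1\big(p_j,M^{-1}(w)\big)\,|\pa_w g_j|^2/|(M^{-1})'(w)|$; the paper checks, using the bound $|k_1(\cdot,z)|\lesssim |z|^2$ from \cite{LY}, that $k_1/|(M^{-1})'|$ stays bounded near $w=1$, while Schauder estimates give uniform $C^2$ control of $g_j$ up to $\pa\Om_j$. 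Thus the problem becomes a convergence statement for integrals over compact curves with uniformly bounded integrands, and no tail estimate is needed.

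Your route---dominated convergence directly on $\pa D(p)$---is in principle viable, but the step you yourself flag as ``hardest'' is not actually carried out. Two specific gaps: (a) for $p$ in the interior of a multiply connected $D$, $\pa D(p)$ consists of several closed curves; the components coming from boundary pieces of $D$ away from $p_0$ recede to infinity as $p\to p_0$, and you give no argument that their contribution to the integral vanishes; (b) the tools you invoke (the conformal maps $\phi_j$ of Proposition 2.4 and Lemma 2.3) only yield convergence on compact subsets of $\mathcal H$, and do not by themselves produce a uniform integrable majorant for $|\pa_z g(p,z)|^2$ along the entire, increasingly large, boundary $\pa D(p)$. Without a concrete uniform decay estimate of the type $|\pa_z g(p,z)|\lesssim |z|^{-2}$ valid on all of $\pa D(p)$ for $p$ near $p_0$, the dominated convergence step is not justified. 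The paper's M\"obius compactification is precisely the device that sidesteps this uniform-at-infinity estimate.
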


\begin{proof}
It suffices to show that for $p_0 \in \pa D$,
\[
\lim_{p \in D, \; p \ra p_0}\pa \la(p)
\]
exists. Without loss of generality, assume that $p_0=0$, $\pa \psi(p_0)=1$ and let $p_j \to 0$. The domains $D_j=D(p_j)$ converge to the half-plane
\[
\mathcal{H}=\{z: \Re z < 1/2\}.
\]
The M\"{o}bius transformation 
\[
w = M(z)=(z+1/2)\big/(z-3/2).
\]
maps this half-plane conformally onto the unit disc $\mbb D$ with $M(0) = -1/3$. Furthermore, the domains $\Om_j=M(D_j)$ converge to the unit disc and $M(0) = -1/3 \in \Om_j$ for all $j$.  For brevity, let $g = g(p, z)$. Then $g_j = g \circ M^{-1}(w)$ is the Green's function for $\Om_j$ with pole at $-1/3$ and by Proposition 2.4, it follows that $g_j \rightarrow g_{\mathbb D}$, the Green's function for the unit disc with pole at $-1/3$, uniformly on compact sets of $\mathbb D \setminus \{-1/3\}$. Writing $w=M(z)$ in the variation formula gives,
\begin{equation}
\pa \la(p_j)=-\frac{i}{\pi}\int_{\pa \Om_j}k_1\big(p_j,M^{-1}(w)\big) \left \vert \frac{\pa g_j}{\pa w}  \right \vert \frac{\pa g_j}{\pa w} \frac{dw}{\Big\vert \big(M^{-1}(w)\big)' \Big\vert}.
\end{equation}
To show that the integrals converge, observe first that
\[
M^{-1}(w) = (1/2 - 3w/2)/(w - 1),
\]
hence $\big\vert \big(M^{-1}(w)\big)' \big\vert = O\big( \vert w - 1 \vert^{-2}\big)$ near $w = 1$. By \cite{LY}, $\big\vert k_1(\cdot, z) \big\vert \lesssim \vert z \vert^2$ uniformly for all large $\vert z \vert$ and all $p_j$ close to $p_0$. This means that
\[
\Big\vert k_1\big(p_j, M^{-1}(w)\big) \Big\vert \lesssim \big\vert M^{-1}(w) \big\vert^2 = O\big( \vert w - 1 \vert^{-2}\big)
\]
near $w=1$. The domains $\pa \Om_j$ are all smoothly bounded and converge to $\pa \mathbb {D}$. In fact, $\Om_j$ is defined by
\[
\psi_j \circ M^{-1}(w) = -1 + 2 \Re\big(\partial \psi(p_j) M^{-1}(w)\big) + \psi(p_j) O\big(\vert M^{-1}(w) \vert^2\big)
\]
or equivalently by,
\begin{eqnarray*}
\rho_j(w) & = & \vert w - 1 \vert^2 \psi_j \circ M^{-1}(w)\\
                & = & - \vert w - 1 \vert^2 + 2 \Re \big( \psi(p_j) (1/2 - 3/2w)(\overline w - 1) \big) + \psi(p_j) \vert w - 1 \vert^2 O\big(\vert w -1 \vert^{-2}\big)
\end{eqnarray*}
after clearing denominators. The remainder term is thus rendered harmless near $w=1$. By working with their derivatives, it follows that $\rho_j$ converges to $\rho_{\infty}  = 1 - \vert w \vert^2$ in the $C^{\infty}$-topology in a fixed neighbourhood of $w = 1$, say $U$. Now split the integral in (2.7) as the sum of two integrals, one over $\partial \Om_j \setminus U$ and the other over $\partial \Om_j \cap U$. It remains to show that the derivatives of $g_j$ on $\partial \Om_j$ converge to the corresponding derivatives of $g_{\mathbb D}$ on $\partial \mathbb D$. This is a consequence of the Schauder estimates. Indeed, let $B$ be a small disc around $w = -1/3$ such that $B \subset \Om_j$ for all large $j$. Then the $C^2$-norm of $g_j$ on $\Om_j \setminus B$ is dominated by a constant times the sum of the $C^0$-norm of $g_j$ on $\pa \Om_j \setminus B$ and the $C^2$-norm of $g_j$ on $\pa B$. The constant appearing in this inequality is essentially harmless and depends on $\Om_j$ (which converge to $\mathbb D$) -- thus it can be chosen to be independent of $j$. Being harmonic on $\Om_j \setminus B$, the $C^0$-norm of $g_j$  is dominated by its $C^0$-norm on $\pa \Om_j \cup \pa B$. Note that $g_j = 0$ on $\pa \Om_j$ and therefore what matters is the behaviour of $g_j$ on $\pa B$. By Proposition 2.4, the $g_j$'s converge to $g_{\mathbb D}$ (along with all derivatives) away from $w = -1/3$ and hence their $C^2$-norms are bounded independent of $j$. Now let $a_j \in \pa \Om_j$ converge to $a \in \pa \mathbb D$. For a $\delta > 0$ (to be chosen later), consider the inward normals to $\pa \Om_j$ at $a_j$ of length $\delta$. Let $b_j \in \Om_j$ be such that the interval $[a_j, b_j]$ is normal to $\pa \Om_j$ at $a_j$ and has length $\delta$. Note that the $b_j$'s lie in a compact subset of $\mathbb D$. Let $b_0 \in \mathbb D$ be a limit point of $\{b_j\}$. Then
\[
\left \vert \frac{\pa g_j}{\pa w}(a_j) - \frac{\pa g_{\mathbb D}}{\pa w}(a) \right \vert \le \left \vert \frac{\pa g_j}{\pa w}(a_j) - \frac{\pa g_j}{\pa w}(b_j) \right \vert + \left \vert \frac{\pa g_j}{\pa w}(b_j) - \frac{\pa g_{\mathbb D}}{\pa w}(b_0) \right \vert  + \left \vert \frac{\pa g_{\mathbb D}}{\pa w}(b_0) - \frac{\pa g_{\mathbb D}}{\pa w}(a) \right \vert.
\]
The first and third terms are no more than a uniform constant times $\vert a_j - b_j \vert = \delta$ by the mean value theorem while the second one can be made arbitrarily small by Proposition 2.4 since the $b_j$'s are compactly contained in $\mathbb D$. Thus by choosing $\delta$ appropriately, it follows that the left side can be made arbitrarily small, i.e., the derivatives of $g_j$ on $\pa \Om_j$ converge to those of $g_{\mathbb D}$ on $\pa \mathbb D$ uniformly on a given compact neighbourhood of $\pa \mathbb D$. By working with the integrals on $\pa \Om_j \cap U$ and $\pa \Om_j \setminus U$ separately, it follows that the integrals in (2.7) converge to 
\[
-\frac{i}{\pi}\int_{\pa \mathbb D}k_1\big(p_0, M^{-1}(w)\big) \left \vert \frac{\pa g_{\mathbb D}}{\pa w}  \right \vert \frac{\pa g_{\mathbb D}}{\pa w} \frac{dw}{\Big\vert \big(M^{-1}(w)\big)' \Big\vert},
\]
and this completes the proof.
\end{proof}

A consequence of all this is that 
\[
c_D(z) \vert dz \vert = e^{-\Lambda_D(z)} \vert dz \vert  = \frac{e^{-\lambda_D(z)}}{\big(-\psi(z)\big)} \vert dz \vert \approx \frac{e^{-\lambda_D(z)}}{\text{dist}(z, \pa D)} \vert dz \vert.
\]
Since $\lambda_D \in C^1(\overline D)$, it follows that the capacity metric is uniformly comparable with a metric whose density is $\rho_D(z)$ (the hyperbolic density) times a $C^1$-smooth function on $\overline D$.

\section{More on the capacity metric}

\noindent In this section we will study the curvature and the boundary behaviour of the geodesics in this metric.  Recall that the curvature of $c_D(z) \vert dz \vert$ is
\[
\mathcal K(z)=-4 c^{-2}_D(z) \pa \overline \pa \log c_D(z).
\]

\subsection{Proof of Proposition 1.4} By (1.5),
\[
\big(-\psi(p)\big)^2 \; c_D^2(p) \to \big\vert \pa \psi(p_0) \big\vert^2,
\]
and by Theorem 1.3 (i),
\[
\big(-\psi(p)\big)^2 \;\pa \overline \pa \log c_D(p) \to \big\vert \pa \psi(p_0) \big\vert^2,
\]
as $p \to p_0$. Hence $\mathcal K(p) \to -4$ as $p \to p_0$.

\subsection{Proof of Theorem 1.5} 

For (i), it suffices to show that the capacity metric satisfies the hypotheses of Theorem 1.1 of [Herbort]. In fact, all that is needed is Property B of this theorem which essentially demands that $ds^2 = c^2_D(z) \vert dz \vert^2$ blow up at a certain rate near $\pa D$.  But this is immediate from the fact that for a nonzero $v \in \mathbb C$ at $z \in D$, 
\[
ds^2(z, v)/ \vert v \vert^2 = c^2_D(z)  \gtrsim \big(\psi(z)\big)^{-2} 
\]
from Theorem 1.3 (i). Thus, every nontrivial homotopy class in $\pi_1(D)$ contains a closed geodesic.

\medskip

\noindent For (ii), we need the following intermediary steps.

\medskip

\noindent {\sf Step 1:} For a smoothly bounded domain $D$, the capacity metric is $\delta$-hyperbolic in the sense of Gromov.

\medskip

By Theorem 1.3 (i) and a fortiori by the boundary behaviour of the normalised Robin function, there exists a constant $C > 1$ such that
\begin{equation}
C^{-1} \rho_D(z) \le c_D(z) \le C \rho_D(z)
\end{equation}
for all $z \in D$. By abuse of notation, the distance functions corresponding to $\rho_D$ and $c_D$ will again be denoted by the same symbols, i.e., for $a, b \in D$, the hyperbolic distance between them will be written as $\rho_D(a, b)$ while $c_D(a, b)$ will be the distance between them in the capacity metric. This will not cause any confusion for what is meant in each case will be clear from the context. The balls in these metrics will be denoted thus: $B^{\rho}_D(a, r)$ and $B^{c}_D(a, r)$ are the balls centered at $a \in D$ of radius $r$ in the hyperbolic and capacity metrics respectively. The subscript (in this case $D$) identifies the domain of these metrics.

\medskip

Recall the notion of $\delta$-hyperbolicity in the sense of Gromov: let $(X, d)$ be a metric space and $I = [a, b] \subset \mathbb R$ a compact interval. For $x, y \in X$, a map $\gamma : I \to X$ such that $\gamma(a) = x, \gamma(b) = y$ is called a geodesic segment if it is an isometry, i.e., $d\big(\gamma(s), \gamma(t)\big) = \vert s - t \vert$ for all $s, t \in \mathbb R$. Geodesic segments joining $x, y$ will be denoted by $[x, y]$ despite their possible non-uniqueness. The space $(X, d)$ is called a geodesic space if any pair of points $x, y \in X$ can be joined by a geodesic segment. A geodesic metric space $(X, d)$ is called $\delta$-hyperbolic (for some $\delta \ge 0$) if every geodesic triangle $[x, y] \cup [y, z] \cup [z, x]$ in $X$ is $\delta$-thin, i.e., 
\[
\text{dist}(w, [y, z] \cup [z, x]) < \delta 
\]
for all $w \in [x, y]$. Thus, geodesic triangles are thin in a coarse sense and $(X, d)$ behaves like a negatively curved manifold. Now (3.1) has two consquences namely, $c_D(z) \vert dz \vert$ is complete on $D$ since $\rho_D(z) \vert dz \vert$ is so and thus $(D, c_D)$ is a geodesic space and secondly, the identity map between the metric spaces $(D, \rho_D)$ and $(D, c_D)$ is a quasi-isometry. Since $D$ is smoothly bounded, it follows from \cite{RT} that $\rho_D$ is $\delta$-hyperbolic (for some $\delta$) and therefore so is $c_D$ for a possibly different $\delta$.

\medskip

\noindent {\sf Step 2:} Let $z(t)$ be a geodesic in the capacity metric that does not remain in a compact subset of $D$ for all $t \geq 0$. Then $z(t)$ approaches the boundary $\pa D$ at an exponential rate as $t \to +\infty$.

\medskip

Let $z : [0, +\infty) \to (D, c_D)$ be a geodesic. Then (3.1) implies that $z(t)$ is a quasi-geodesic in $(D, \rho_D)$ in the sense that
\begin{equation}
C^{-1} \vert s - t \vert \le \rho_D\big(z(s), z(t)\big) \le C \vert s - t \vert
\end{equation}
for all $s, t \in [0, +\infty)$. Suppose that $z(t)$ does not remain in a compact subset of $D$ for all $t \geq 0$. The first thing to do is to show that a geodesic that starts in a direction close to the normal to $\pa D$ does not deviate too much from it for all large $t$.

\begin{lem}\label{geo-est}
Let $z(t)$ be a unit speed geodesic in the capacity metric on
$D$ and  suppose for a point $p=z(t_0)$ near $\pa D$, we have
\[
0 < (\psi \circ z)^{\prime}(t_0) < +\infty.
\]
Then the estimates
\[
\text{\em (I)} \quad  -(\psi \circ z)(t) \lesssim (\psi \circ z)^{\prime}(t),
\]
and
\[
\text{\em (II)} \quad \left\vert \arg \Big(\pa \psi\big(z(t)\big) z^{\prime}(t)  \Big)\right\vert \leq \left\vert \arg \Big(z^{\prime}(t_0) \pa \psi\big(z(t_0)\big)\Big) \right\vert + C 
\]
are valid for $t_0 \leq t \leq t_0+100$. Here $C = C(t_0) \to 0$ as $t_0 \to +\infty$. 
\end{lem}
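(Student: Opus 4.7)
My plan is to package both (I) and (II) into a Riccati-type ODE for the complex quantity
\[
w(t) := \pa\psi\big(z(t)\big)\,z'(t),
\]
for which $(\psi\circ z)'(t) = 2\Re w(t)$. Setting $\eta(t) := -\psi(z(t))$, the unit-speed condition $c_D(z)|z'|=1$ combined with (1.5) gives $|z'(t)|\,|\pa\psi(z(t))|/\eta(t)\to 1$ as $z(t)\to\pa D$, hence $|w(t)| = \eta(t)(1+o(1))$. I would first run a short Gr\"onwall bootstrap: since $|\eta'|=|2\Re w|\lesssim \eta$, one obtains $\eta(t)\lesssim \eta(t_0)$ on $[t_0,t_0+100]$, so $z(t)$ stays in a tubular neighbourhood of $\pa D$ of size $O(\eta(t_0))$; by compactness of $\pa D$ and smoothness of $\psi$ the asymptotics of Theorem 1.3 then apply with error bounds uniform on this window. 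Because $|w|\approx\eta$, estimate (II) is really a statement about $\arg w$, and (I) will follow from (II) as soon as $\arg w$ is bounded away from $\pm\pi/2$.

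Next I differentiate $w$ and insert the geodesic equation $z''(t) = \pa\La_D(z(t))(z'(t))^2$ to obtain
\[
w'(t) = \pa^2\psi(z)(z')^2 + \pa\ov\pa\psi(z)|z'|^2 + \pa\psi(z)\,\pa\La_D(z)\,(z')^2.
\]
The first two terms are $O(|z'|^2)=O(\eta^2)$, while Theorem 1.3 (i) applied with $(\al,\be)=(1,0)$ gives $\pa\La_D(z)\cdot\eta(z) = -\pa\psi + o(1)$; absorbing the smooth variation of $\pa\psi$ into the error, the third term becomes $-w^2/\eta + o(\eta)$, producing the Riccati equation
\[
w'(t) = -\frac{w(t)^2}{\eta(t)} + o(\eta(t)).
\]
Writing $w=\rho e^{i\theta}$ with $\rho=|w|$, dividing by $w$, and taking imaginary parts then yields
\[
\theta'(t) = -\frac{\rho(t)}{\eta(t)}\sin\theta(t) + o(1),
\]
which, because $\rho/\eta\to 1$, is a small perturbation of the autonomous ODE $\theta'=-\sin\theta$ whose flow drives $|\theta|$ monotonically to $0$ on $(-\pi,\pi)$.

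The hypothesis $(\psi\circ z)'(t_0)>0$ places $\theta(t_0)\in(-\pi/2,\pi/2)$, so a standard comparison argument against the perturbed ODE will give
\[
|\theta(t)|\le|\theta(t_0)| + C(t_0),\qquad t_0\le t\le t_0+100,
\]
with $C(t_0)\to 0$ as $t_0\to+\infty$, which is precisely (II). For (I), once $t_0$ is large enough that $C(t_0)<\pi/2-|\theta(t_0)|$, the bound (II) keeps $\cos\theta(t)$ bounded below by a positive constant, and then
\[
(\psi\circ z)'(t) = 2\rho(t)\cos\theta(t)\gtrsim \eta(t) = -(\psi\circ z)(t).
\]

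The hard part will be upgrading the pointwise boundary asymptotics of Theorem 1.3 to quantitative remainder estimates that are uniform on shrinking tubular neighbourhoods of $\pa D$, so that the $o(1)$ and $o(\eta)$ appearing in the Riccati and $\theta$-equations are genuinely uniform on the entire window $[t_0,t_0+100]$. Once that uniformity is in hand and the Gr\"onwall bootstrap has confined $\eta(t)$ to the required range, the Riccati/phase-plane analysis described above delivers both (I) and (II) in one stroke.
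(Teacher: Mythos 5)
Your argument is essentially correct, but it is a genuinely different proof from the one in the paper. The paper verifies (I) and (II) by an explicit computation with the M\"obius parametrization of hyperbolic geodesics in the unit disc, and then transfers these estimates to $D$ by localizing near the boundary (Lemma 2.3), comparing the capacity metrics of $D$ and of a simply connected one-sided neighbourhood $U\cap D$, and invoking perturbation theory for ODEs to show the two geodesics stay $O\big(\de^2(p)\big)$-close on the time window. You instead work intrinsically: differentiating $w=\pa\psi(z)z'$ along the flow, inserting the geodesic equation, and reading off a perturbed Riccati equation $w'=-w^2/\eta+o(\eta)$ whose phase equation $\theta'=-(\rho/\eta)\sin\theta+o(1)$ immediately yields (II), with (I) following once $\cos\theta$ stays bounded below. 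Your route avoids both the localization lemma and the disc computation, and makes the mechanism (the normal direction is an attracting fixed point of the phase flow) transparent; the paper's route avoids having to quantify the error in Theorem 1.3 by outsourcing all the analysis to the model case. Two remarks. First, the uniformity you flag as ``the hard part'' is in fact already available in the paper: Theorem \ref{reg-norm-robin} gives $\la=\La+\log(-\psi)\in C^1(\ov D)$, whence $\pa\La_D(z)\,\eta(z)=-\pa\psi(z)+O(\eta)$ \emph{uniformly} on $\ov D$, which is stronger than the $o(1)$ you need and makes your error terms $O(\eta^2)$ in the Riccati equation; so there is no gap here provided you cite that result rather than the pointwise Theorem 1.3. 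Second, note that the implied constant in your (I) degenerates as $|\theta(t_0)|\to\pi/2$ (it is of order $\cos\theta(t_0)$); this is not a defect relative to the paper, whose disc computation has exactly the same feature at times near $t_0$, but it should be stated explicitly since the downstream connectedness argument in Lemma 3.2 relies on (II) to keep the angle from creeping up to $\pi/2$ across successive windows.
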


Geometrically, if at some point of time the angle between a geodesic and the normal to $\pa D$ at the nearest point is less than $\pi/2$, then (I) says that it remains so for some time and (II) gives an estimate of this angle. 

\begin{proof}
We divide the proof into two steps. In the first step, we verify (I) and (II) in a special case, namely when the domain is the unit disc, by explicit calculation. Then we prove the general case by localising the problem near the boundary and comparing the geodesics of $D$ with that of a one-sided neighbourhood of a boundary point conformally equivalent to the unit disc. Without loss of generality, assume that $t_0=0$.

\medskip

\noindent {\sf Step A:} For the unit disc $\mathbb D$ with defining function $\psi(z)=\vert z \vert -1$, let us look at the geodesics in the hyperbolic metric that start at $1/2<p<1$. The geodesics of $\mathbb D$ starting at the origin are of the form
\[
t \mapsto e^{i\theta}T(t), \quad T(t)=\frac{e^{2\al t}-1}{e^{2\al t}+1},
\]
whose initial velocity is $e^{i \theta} \alpha$. Therefore, the geodesics starting at $z(0)=p$ are given by
\[
t \mapsto z(t)=\frac{e^{i\theta}T(t)+p}{1+pe^{i\theta}T(t)}.
\]
Note that
\[
(\psi \circ z)^{\prime}=2\Re\big( \pa \psi(z) z^{\prime}\big) = \frac{1}{\vert z\vert}\Re(\ov z z^{\prime}),
\]
for $z\neq 0$, and
\[
 \ov z z^{\prime}=\frac{e^{-i\theta}T+p}{1+pe^{-i\theta}T} \cdot \frac{e^{i\theta}(1-p^2)T^{\prime}}{(1+pe^{i\theta}T)^2}
=\frac{(1-p^2)T^{\prime}(T+pe^{i\theta})}{\vert 1+pe^{i\theta}T\vert^2 (1+pe^{i\theta}T)}.
\]
We are given that $(\psi \circ z)^{\prime}(0) > 0$ which from the above calculation implies that
\[
(1-p^2)\al p\cos \theta > 0,
\]
and hence $\cos\theta > 0$. Now to prove (I), note that
\[
\frac{(\psi \circ z)^{\prime}}{-(\psi \circ z)} = \frac{1}{\vert z \vert(1-\vert z \vert)}\Re(\ov z z^{\prime}) =  \frac{1}{\vert z \vert(1-\vert z \vert)}\frac{(1-p^2)T^{\prime}(T+p^2T+p\cos\theta+p\cos\theta T^2)}{\vert 1+pe^{i\theta}T\vert^4 },
\]
where
\[
T'(t) = \frac{4 \al e^{2 \al t}}{(e^{2 \al t} + 1)^2},
\]
and $1 - \vert z(t) \vert$ (which is of the order of $1 - \vert z(t) \vert^2$) involves 
\[
1 - T^2(t) =  \frac{4 \al e^{2 \al t}}{(e^{2 \al t} + 1)^2},
\]
up to harmless universal constants that do not blow up near $\pa \mathbb D$. This proves (I) with a universal constant.

\medskip

For (II), note that
\begin{multline*}
\arg(\pa\psi(z)z^{\prime})=\arg(\ov z z^{\prime})=\arg \left( \frac{T+pe^{i\theta}}{1+pe^{i\theta}T} \right)
=\arg\big((T+pe^{i\theta})(1+pe^{-i\theta}T)\big)\\
= \tan^{-1} \left( \frac{p\sin\theta(1-T^2)}{T+p^2T+p\cos\theta+p\cos\theta T^2} \right),
\end{multline*}
and which is equal to $\theta$ for $t=0$. By the mean value theorem applied to $x \mapsto \tan^{-1}x$, 
\[
\left \vert \arg \left( \frac{T+pe^{i\theta}}{1+pe^{i\theta}T} \right) - \theta \right \vert \lesssim \big\vert T'(\tilde t) \big\vert 
\]
for some $\tilde t \in (0, 100)$. But then $\vert T' \vert \to 0$ as $t \to +\infty$ and thus in any interval of the form $[t_0, t_0 + 100]$, it follows that (II) holds.

\medskip

{\sf Step B:} Let $D$ be as in the statement of the Lemma. Let us first localize the problem near a boundary point $p_0 \in \pa D$. Choose
a neighbourhood $U$ of $p_0$ and a constant $R$ as in Lemma 2.3 and
without loss
of generality assume that $\ti D =U \cap D$ is simply connected. We will work
with geodesics $z(t)$ of the capacitymetric that start in $\ti D$. Corresponding to $z(t)$ 
consider the geodesic $\ti z(t)$ of the capacity metric in $\ti
D$ with initial conditions $\ti z(0)=z(0)=p$ and
$\ti{z}^{\prime}(0)=z^{\prime}(0)$. Since $\ti D$ is conformally
equivalent to the unit disc and also shares a smooth piece
of boundary $\Ga$ with $D$ which is defined by $\psi$, (I) and (II) hold for $\ti z$:
\begin{equation}\label{ti-z}
(\psi \circ \ti z)^{\prime}(t) \gtrsim -(\psi \circ \ti z) (t),
\end{equation}
and
\begin{equation}\label{ti-z-prime}
\left\vert \arg \Big(\pa \psi\big(\ti z(t)\big) \ti z^{\prime}(t)  \Big)\right\vert \leq \left\vert \arg \Big(\ti z^{\prime}(0) \pa \psi\big(p\big)\Big) \right\vert + C
\end{equation}
for $0 \leq t \leq 100$. The next step is to show that $z(t)$ and $\ti z(t)$ remain close to each other with nearly equal speed for some amount of time and deduce (I) and (II) for $z$ from the corresponding results for $\ti z$. First note that since the
geodesic $z(t)$ has unit capacity speed, i.e,
\[
c_D\big(z(t)\big)\vert z^{\prime}(t)\vert=1,
\]
for all $t$, Theorem 1.3 shows that,
\begin{equation}\label{bdd_speed} 
\big\vert z^{\prime}(t)\big\vert \approx -\psi\big(z(t)\big) \approx \de\big(z(t)\big).
\end{equation}
Now consider the transformation
\[
w=T(z)=\frac{z-p}{\de(p)}
\]
and let $\Om=T(D)$ and $\ti \Om = T(\ti D)$. Then for $w,q \in \ti \Om$,
\[
0< G_{\Om}(w, q) - G_{\ti \Om}(w, q) < 2 \log \frac{2R +
3\de(T^{-1}w)}{2R-\de(T^{-1}w)},
\]
and hence
\[
0< \La_{\Om}(q)-\La_{\ti \Om}(q) < 2 \log \frac{2R +
3\de(T^{-1}q)}{2R-\de(T^{-1}q)}.
\]
This implies that on a sufficiently small neighbourhood $N$ of the origin
that is compactly contained in $\ti \Om$, we have
\begin{equation}\label{perturb}
c_{\ti \Om}(q) \geq c_{\Om}(q) \geq c_{\ti\Om}(q)-\de(p) O(1).
\end{equation}
From standard perturbation results for ordinary differential
equations, the geodesics $w(t)=T\big(z(t)\big)$ and $\ti w(t)=T\big(\ti
z(t)\big)$ satisfy
\[
\big\vert \ti w (t)-w (t) \big\vert \lesssim \de(p) \quad \text{and} \quad
\big\vert
\ti w^{\prime}(t) - w^{\prime}(t) \big\vert \lesssim \de(p),
\]
for $\vert t\vert \leq 500$ as $\vert w^{\prime}(0) \vert \lesssim 1$ by
(3.5). Converting back from $w$ to our original
coordinate $z$, we obtain
\[
\big\vert \ti z (t)-z (t) \big\vert \lesssim \de^2(p)  \quad \text{and}
\quad \big\vert \ti z^{\prime} (t) - z^{\prime} (t) \big\vert \lesssim \de^2(p)
\]
for  $\vert t\vert \leq 500$. Thus these two geodesics are close to each other and have nearly equal speed for this period of time. The above estimates together with the fact that $\psi$ is
smooth imply that
\begin{equation}\label{z-tiz}
\big\vert z^{\prime}\pa\psi(z)-\ti z^{\prime}\pa\psi(\ti z)\big\vert \leq \big\vert z^{\prime} \pa\psi(z) - \ti z^{\prime}\pa \psi(z)\big\vert + \big\vert  \ti z^{\prime}\pa \psi(z) - \ti z^{\prime} \pa \psi(\ti z)\big\vert 
\lesssim \de^2(p),
\end{equation}
for $\vert t\vert \leq 500$.

\medskip

Now to prove (I), note that $(\psi \circ z)^{\prime}=2\Re(z^{\prime} \pa \psi)$, and so by (3.7),
\[
\big\vert (\psi \circ z)^{\prime}(t) - (\psi \circ \ti z)^{\prime}(t) \big\vert \lesssim
\de^2(p).
\]
Combining with (3.3),
\[
(\psi \circ z)^{\prime} \geq (\psi \circ \ti z)^{\prime} + \ti C \de^2(p) \geq C (-\psi \circ \ti z) + \ti C \de^2(p) \geq C \de(p) + \ti C \de^2(p)
\geq C \de(p) \gtrsim (\de \circ z) (t) \gtrsim
-(\psi \circ z)(t),
\]
for $0 \leq t \leq 100$ as required. Here it is important to note that the constant $C$ comes from (I) and is hence universal. For (II), combining (3.4) and (3.7), and (again!) using $C$ as a different universal constant,
\[
\Big\vert \arg\big(z^{\prime} \pa \psi(z)\big)\Big\vert
= \Big\vert \arg\big(\ti z^{\prime} \pa \psi (\ti z)\big)\Big\vert 
\leq \Big\vert \arg \big(\ti z^{\prime}(0) \pa \psi(p)\big) \Big\vert + C 
= \Big\vert \arg \big(z^{\prime}(0) \pa \psi(p)\big)\Big\vert + C,
\]
for $0 \leq t \leq 100$ as required. Here, the first and last equalities come from (3.7) which shows that the magnitudes of $z^{\prime}\pa\psi(z)$ and $\ti z^{\prime}\pa\psi(\ti z)$ are essentially the same and hence their arguments must be the same.
\end{proof}

\begin{lem}
Let $z(t)$ be a unit speed geodesic not remaining in a compact subset of $D$ for
all $t \geq 0$. There is a time $t_0>0$ and a constant $C>0$ such that
\begin{equation}\label{lim-geo}
\frac{\psi\big(z(t)\big)}{\psi\big(z(t_0)\big)} \leq e^{-C(t-t_0)} \quad \text{for $t>t_0$ large}.
\end{equation}
\end{lem}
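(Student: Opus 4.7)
The plan is to convert the one-sided differential inequality (I) from the preceding Lemma into exponential decay of $-\psi\circ z$ via a Gronwall-type argument, after (a) locating a time $t_0$ at which $z(t_0)$ sits close to $\pa D$ with the geodesic moving outward, and (b) iterating (I) over successive intervals of length $100$ to extend it to the entire ray $[t_0,\infty)$.

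\emph{Choice of $t_0$.} Since $D$ is bounded and the orbit $\{z(t):t\ge 0\}$ is not contained in any compact subset of $D$, there is a sequence $\tau_n\to\infty$ with $\psi(z(\tau_n))\to 0^-$. Fix $\ep>0$ small enough that every point $p\in D$ with $-\ep<\psi(p)<0$ lies inside a neighbourhood of $\pa D$ on which the preceding Lemma applies. Choose $\tau_n$ so that $\psi(z(\tau_n))>-\ep/2$ and let $s<\tau_n$ be the last earlier time (if any) at which $\psi(z(s))=-\ep$. On $[s,\tau_n]$ the function $\psi\circ z$ increases by at least $\ep/2$, so by the mean value theorem there is $t_0\in(s,\tau_n)$ with $(\psi\circ z)'(t_0)>0$ and $-\ep<\psi(z(t_0))<0$. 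If no such $s$ exists, i.e.\ $\psi\circ z>-\ep$ on $[0,\tau_n]$, we apply the same mean value argument to any subinterval along which $\psi\circ z$ strictly increases toward its supremum. Either way, $t_0$ satisfies the hypothesis of the preceding Lemma.

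\emph{Iteration and Gronwall.} By that Lemma there is a universal constant $c>0$ with $(\psi\circ z)'(t)\ge c(-\psi(z(t)))$ for all $t\in[t_0,t_0+100]$. In particular, $\psi\circ z$ is strictly increasing on this interval, so $z(t_0+100)$ is closer to $\pa D$ than $z(t_0)$ and $(\psi\circ z)'(t_0+100)>0$; the hypothesis of the preceding Lemma is thus fulfilled at $t_1:=t_0+100$, yielding (I) on $[t_1,t_1+100]$ with the same universal $c$. Setting inductively $t_k:=t_0+100k$, the inequality (I) persists on every $[t_k,t_{k+1}]$ and therefore on the whole of $[t_0,\infty)$. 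Writing $g(t):=-\psi(z(t))>0$, (I) becomes $g'(t)\le -c\,g(t)$, which integrates to $g(t)\le g(t_0)\,e^{-c(t-t_0)}$, i.e.
\[
\frac{\psi(z(t))}{\psi(z(t_0))}=\frac{g(t)}{g(t_0)}\le e^{-c(t-t_0)},
\]
which is the desired inequality with $C=c$.

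\emph{Main obstacle.} The delicate point is the uniformity of $c$ throughout the bootstrap: if the constant in (I) were to deteriorate at each step, the iterated estimate would collapse. This uniformity rests on the observation made in the proof of the preceding Lemma that the constant in (I) is truly universal, arising from the explicit unit-disc computation in Step A combined with the perturbative bound $O(\de^2(p))$ of Step B, whose error becomes negligible as $z$ approaches $\pa D$. Once this is granted, the rest is a clean iteration plus Gronwall; the angle estimate (II), whose constant $C(t_0)$ does degrade (in fact improves) with $t_0$, is not needed here.
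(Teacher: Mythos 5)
Your proof is correct and follows essentially the same route as the paper: locate a time $t_0$ near $\pa D$ with $(\psi\circ z)'(t_0)>0$, propagate estimate (I) forward in time (what the paper calls a ``connectedness argument'' you make explicit as an induction over intervals of length $100$), and integrate the resulting differential inequality. The only cosmetic differences are that your mean-value choice of $t_0$ secures strict positivity of $(\psi\circ z)'(t_0)$ where the paper's first-hitting time only gives $\geq 0$, and that you correctly note that only the lower bound in $(\psi\circ z)'\approx-(\psi\circ z)$, hence only (I) and not (II), is needed for the conclusion.
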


\begin{proof}
Let $\ep>0$ be small and let $t_0=\min\Big\{t \geq 0: \psi\big(z(t)\big) \geq - \ep\Big\}$. Then
\begin{itemize}
\item [(i)] $z(t_0)$ is near $\pa D$, and
\item [(ii)] $(\psi \circ z)^{\prime}(t_0)\geq 0$.
\end{itemize}
The conclusion of the previous lemma is that once a geodesic is sufficiently close to $\pa D$ and has positive speed, then it continues to move towards $\pa D$ for a fixed interval of time; its speed does not decay (by (I)) and its direction remains essentially the same throughout this interval (by (II)). Since the constants entering into (I) and (II) have the listed properties, a connectedness argument now shows that the geodesic retains these two properties for all large $t$. Hence $\psi\big(z(t)\big)$ is a increasing function for all large $t$. For this reason, it follows that eventually 
\[
(\psi \circ z)^{\prime}(t) \gtrsim -(\psi \circ z)(t).
\]
Also, from (3.5),
\[
(\psi \circ z)^{\prime}(t) \lesssim -(\psi \circ z)(t).
\]
Thus,
\begin{equation}\label{bev-geo}
(\psi \circ z)^{\prime}(t) \approx -(\psi \circ z)(t),
\end{equation}
from which (3.8) follows upon integration.
\end{proof}

\medskip

\noindent {\sf Step 3:} The geodesic $z(t)$ hits the boundary $\pa D$ at a unique point.

\medskip

Consider the sequence of points $z(n)$ which satisfy
\[
C^{-1} \leq \rho_D\big(z(n), z(n+1)\big) \leq C
\]
for all $n \geq 1$ by (3.2) -- this means that
\begin{equation}
z(n + 1) \in B^{\rho}_D\big(z(n), C\big).
\end{equation}
We will need the following localization lemma for the hyperbolic metric which says that the length of a vector based at a point close to, say $\zeta \in\pa D$ is essentially the same when measured in either the hyperbolic metric on $D$ or $U \cap D$, where $U$ is a beighbourhood of $\zeta$.  

\begin{lem}
For every $\zeta \in \pa D$, there exist a pair of arbitrarily small euclidean neighbourhoods $\zeta \in V \subset U$ of a uniform size and a uniform constant $C = C(U, V) > 0$ such that 
\[
B^{\rho}_{U \cap D}(p, \eta) \subset B^{\rho}_D(p, \eta) \subset B^{\rho}_{U \cap D}(p, C \eta)
\]
for every $p \in V \cap D$ and every $\eta > 0$. 
\end{lem}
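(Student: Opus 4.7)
The first inclusion $B^{\rho}_{U\cap D}(p,\eta)\subset B^{\rho}_D(p,\eta)$ is immediate from the Schwarz--Pick lemma applied to the holomorphic inclusion $\iota : U\cap D\hookrightarrow D$: this map is distance-decreasing for the hyperbolic metric, whence $\rho_D(a,b)\le \rho_{U\cap D}(a,b)$ for every $a,b\in U\cap D$. The substance of the lemma is therefore the second inclusion, and my plan is to pass from a pointwise density comparison to a distance comparison.

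I would first shrink $U$ around $\zeta$ so that $\Gamma:=\pa D\cap U$ is a smooth arc shared by $\pa D$ and $\pa(U\cap D)$ and so that $U\cap D$ is simply connected. Applying Theorem 1.3 (i) simultaneously to $D$ and to $U\cap D$ with the common local defining function $\psi$ of $\Gamma$, both hyperbolic densities satisfy the same boundary asymptotic $\sim|\pa\psi(\zeta_{0})|/(-\psi(z))$ as $z\to\zeta_{0}\in\Gamma$. Choosing an intermediate neighbourhood $W$ with $\overline V\subset W\subset\overline W\subset U$, the remaining boundary $\pa(U\cap D)\setminus\Gamma$ sits at positive Euclidean distance from $W$ and hence does not force $\rho_{U\cap D}$ to blow up inside $W$. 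Combined with Schwarz--Pick, this yields a constant $K=K(U,V)>0$ with
\[
\rho_D(z)\le \rho_{U\cap D}(z)\le K\rho_D(z),\qquad z\in W\cap D.
\]

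To upgrade this to a distance comparison, I would fix $p\in V\cap D$ and $q\in B^{\rho}_{D}(p,\eta)$, let $\gamma$ be the minimizing $D$-hyperbolic geodesic from $p$ to $q$ (which lies in $B^{\rho}_{D}(p,\eta)$ by the triangle inequality), verify that after shrinking $V$ if necessary $\gamma$ stays inside $W\cap D$ (so that the density bound is applicable along $\gamma$), and then integrate:
\[
\rho_{U\cap D}(p,q)\le \int_{\gamma}\rho_{U\cap D}(z)\,|dz|\le K\int_{\gamma}\rho_D(z)\,|dz|=K\rho_D(p,q)\le K\eta.
\]
This places $q$ in $B^{\rho}_{U\cap D}(p,K\eta)$ and gives the second inclusion with $C=K$. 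The containment of $\gamma$ inside $W\cap D$ rests on the familiar shape estimates for hyperbolic balls of a smoothly bounded planar domain near a smooth boundary arc -- either by transferring the disc picture through a conformal map of the simply connected piece $U\cap D$, or, in the global multiply connected setting, by invoking the Gromov hyperbolicity already established in Step 1 of Theorem 1.5 together with the standard fact that geodesics between points close to a common boundary point in a Gromov hyperbolic model stay in a horocyclic neighbourhood of that point.

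The main obstacle is precisely this geodesic-containment step: the density comparison is essentially a direct consequence of Theorem 1.3, but confining the $D$-hyperbolic geodesic inside $W\cap D$ requires either an explicit shape estimate for hyperbolic balls anchored at points close to $\zeta$ or, equivalently, a horoball/quasi-geodesic argument that exploits the Gromov hyperbolicity of $(D,\rho_D)$. Once $V\subset W$ is taken sufficiently small in a manner controlled by the range of $\eta$ under consideration, the containment is secured and the density integration above gives the claim.
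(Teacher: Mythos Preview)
The paper does not argue via a density comparison. It observes that the hyperbolic metric is the Kobayashi metric in one complex variable and appeals to the standard localization of Kobayashi balls near smooth (strongly pseudoconvex) boundary points, which is proved using holomorphic peak functions; the Riemann map of $U\cap D$ onto $\mathbb D$ supplies the peak function at $\zeta$. That route bypasses any need to track geodesics of $\rho_D$.

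Your density-then-integrate strategy is a reasonable alternative, but the obstacle you single out is genuine and in fact cannot be removed in the generality asserted. With $U,V$ fixed, take any $q\in D\setminus U$; since $\rho_D(p,q)<\infty$, for $\eta>\rho_D(p,q)$ the ball $B^{\rho}_D(p,\eta)$ contains $q\notin U\cap D$, and the second inclusion fails outright. Thus no argument can establish the lemma literally ``for every $\eta>0$''; the statement must be read with $\eta$ confined to a bounded range, which is all that is actually used at (3.10). For bounded $\eta$ your plan does go through: the Euclidean shape estimate for hyperbolic balls (the very next lemma, transported from $\mathbb D$ to $U\cap D$ by a Riemann map and combined with $\rho_{U\cap D}\ge\rho_D$) forces $B^{\rho}_D(p,\eta)\subset W\cap D$ once $V$ is chosen small relative to $W$ and to the bound on $\eta$, after which integrating the density inequality along the $\rho_D$-geodesic gives the desired inclusion.

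One minor correction: Theorem~1.3 concerns the capacity density $c_D$, which coincides with $\rho_D$ only on simply connected domains. Your citation is therefore fine for $\rho_{U\cap D}$, but for $\rho_D$ on the multiply connected domain $D$ you should invoke the classical boundary asymptotic of the hyperbolic density near a $C^2$ boundary arc directly rather than Theorem~1.3.
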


This follows from the existence of peak functions at points of $\pa D$ -- use the Riemann mapping theorem to identify a one-sided neighbourhood of a boundary point of $\pa D$ with the unit disc. A proof of this series of inclusions for balls in the Kobayashi metric near strongly pseudoconvex points is well known. The same steps can be applied in this case as well. The other ingredient is an estimate on the size of the hyperbolic ball $B^{\rho}_{U \cap D}(p, R)$ in terms of the euclidean distance between $p$ and the boundary of $U \cap D$.

\begin{lem}
For $\alpha \in \mathbb D$ and $r > 0$, the hyperbolic ball
\[
B^{\rho}_{\mathbb D}(\alpha, r) \subset B\big(\alpha, C \rm{dist}(\alpha, \pa \mathbb D)\big)
\]
for some $C = C(r) > 0$.
\end{lem}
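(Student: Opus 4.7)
The plan is to exploit the conformal invariance of the hyperbolic metric on $\mathbb D$ under Möbius transformations to reduce the problem to an explicit computation. For $\alpha \in \mathbb D$, consider the Möbius automorphism
\[
\phi_\alpha(z) = \frac{\alpha - z}{1 - \bar\alpha z},
\]
which is an involution of $\mathbb D$ sending $\alpha$ to $0$. Since $\phi_\alpha$ is an isometry of the hyperbolic metric, it takes $B^{\rho}_{\mathbb D}(\alpha, r)$ onto $B^{\rho}_{\mathbb D}(0, r)$, and the latter is the Euclidean disc $\{|w| \le R\}$ where $R = R(r) \in (0,1)$ is determined by the chosen normalisation of $\rho_{\mathbb D}$ (for instance $R = \tanh(r/2)$).

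Thus every point $z \in B^{\rho}_{\mathbb D}(\alpha, r)$ can be written as $z = \phi_\alpha(w)$ for some $w$ with $|w| \le R$, and a direct computation gives
\[
z - \alpha = \phi_\alpha(w) - \alpha = \frac{(|\alpha|^2 - 1)\,w}{1 - \bar\alpha w}.
\]
Taking absolute values and using $|1 - \bar\alpha w| \ge 1 - |\alpha| R \ge 1 - R$ for $|w|\le R$, I obtain
\[
|z - \alpha| \le \frac{R\,(1 - |\alpha|^2)}{1 - R} \le \frac{2R}{1 - R}\,(1 - |\alpha|) = C\,\text{dist}(\alpha, \partial\mathbb D),
\]
with $C = 2R/(1-R)$ depending only on $r$. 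This is exactly the stated inclusion.

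There is no real obstacle here; the only thing to keep track of is the relation between the hyperbolic radius $r$ and the Euclidean radius $R$ of the origin-centred ball (which is the only place the normalisation constant of $\rho_{\mathbb D}$ enters), and the elementary inequality $1 - |\alpha|^2 \le 2(1 - |\alpha|)$. The resulting constant $C = C(r)$ blows up as $r \to \infty$ (reflecting the fact that a very large hyperbolic ball eventually engulfs most of $\mathbb D$), which is consistent with the qualitative statement that hyperbolic balls of fixed radius are uniformly Euclidean-small compared with the distance of their centres to the boundary.
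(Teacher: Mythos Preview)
Your proof is correct and follows essentially the same idea as the paper: both use the M\"obius automorphism of $\mathbb D$ sending $\alpha$ to $0$ to identify $B^{\rho}_{\mathbb D}(\alpha,r)$ with a Euclidean disc of radius $\eta=\eta(r)<1$ about the origin, and then estimate $|z-\alpha|$ in terms of $1-|\alpha|$. The only cosmetic difference is that the paper first writes the hyperbolic ball explicitly as a Euclidean disc with centre $A=\dfrac{(1-\eta^2)\alpha}{1-\eta^2|\alpha|^2}$ and radius $R=\dfrac{\eta(1-|\alpha|^2)}{1-\eta^2|\alpha|^2}$ and then bounds $|A-\alpha|+R$, whereas you compute $z-\alpha=\dfrac{(|\alpha|^2-1)w}{1-\bar\alpha w}$ directly; your route is a bit shorter but the content is the same.
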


\begin{proof}
For $\alpha \in \mathbb D$, let 
\[
\phi(z, \alpha) = \frac{z - \alpha}{1 - \overline \alpha z}.
\]
Then
\[
B^{\rho}_{\mathbb D}(\alpha, r) = \left\{ z \in \mathbb D: \vert \phi(z, \alpha) \vert < \frac{e^{2r} -1}{e^{2r} + 1} \right\}
\]
which is seen to be a euclidean disc whose center and radius are
\[
A = \frac{(1- \eta^2) \alpha}{1-{\eta}^2 \vert \alpha \vert^2}, \; \; R = \frac{\eta(1 - \vert \alpha \vert^2)}{1 - {\eta}^2 \vert \alpha \vert^2}
\]
respectively, where 
\[
\eta =  \frac{e^{2r} -1}{e^{2r} + 1}.
\]
For $\vert \tau \vert \leq 1$ and $\theta \in \mathbb R$,
\[
\vert A + R \tau e^{i \theta}  - \alpha \vert \leq \vert A - \alpha \vert + R
\]
and by using the above expressions for $A, R$, it can be seen that both terms are of the order of $1 - \vert  \alpha \vert$. The constants that appear only depend on $\eta$ and hence only on $r$.
\end{proof}

Fix $N$ large enough so that $z(N)$ is close enough to a boundary point, say $\zeta \in \pa D$. By (3.10) and the above lemmas,
\[
z(N+1) \in B^{\rho}_D\big(z(N), C\big) \subset B^{\rho}_{U \cap D}\big(z(N), \tilde C\big) \subset B\big(z(N), C^{\ast} {\rm{dist}}(z(N), \pa D)\big),
\]
where the constants are independent of the points $z(n)$. Thus
\[
\big\vert  z(N+1) - z(N) \big\vert \lesssim {\rm{dist}}\big(z(N), \pa D\big) \lesssim e^{-N},
\]
where the last inequality comes from Step $2$. By repeating these steps, it follows that
\[
\big\vert  z(n+1) - z(n) \big\vert \lesssim {\rm{dist}}(z(n), \pa D) \lesssim e^{-n},
\]
for all $n \geq N$. Hence $z(n)$ converges to a unique point on $\pa D$. Finally, thanks to (3.2) once again, not only does (3.10) hold as indicated but the geodesic segment 
\[
z\big([n, n+1]\big) = \big\{ z(t): n \leq t \leq n+1\big\}
\]
is also contained in $B^{\rho}_D\big(z(n), C\big)$ for all large $n$. It follows that $z(t)$ converges to a unique boundary point as $t \to +\infty$.

\medskip

For Theorem 1.5 (iii), recall the notion of a geodesic loop from \cite{He} -- a geodesic loop in a Riemannian manifold $(M, g)$ based at $x \in M$ consists of a nonconstant geodesic
$\gamma : \mathbb R \to M$ and times $t_1, t_2 \in \mathbb R$ with $t_1 < t_2$ such that $\gamma(t_1) = \gamma(t_2) = x$. Thus the geodesic segment $\gamma : [t_1, t_2] \to M$ defines a loop in $M$ based at $x$. The main idea is to use Lemma $6$ of \cite{He} which says that if $(M, g)$ is a complete Riemannian manifold whose universal cover is infinitely sheeted and $x_0 \in M$ is a point through which no closed geodesic passes and $K \subset M$ is a compact set that contains all possible geodesic loops through $x_0$, then there is a geodesic spiral passing through $x_0$. Thus the problem reduces to finding such a compact subset $K$ and this is addressed in the following:

\begin{prop}
There exists an $\ep>0$ such that for each geodesic $\ga$ with $\psi\circ \ga(0)>-\ep$ and $(\psi \circ \ga)^{\prime}(0)=0$, it follows that $(\psi \circ \ga)^{\prime\prime}(0)>0$.
\end{prop}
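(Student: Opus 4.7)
The plan is to compute $(\psi\circ\ga)''(0)$ directly from the geodesic equation $\ga''(t) = \pa\La_D(\ga(t))(\ga'(t))^2$ and then use the first-order boundary asymptotics of $\La_D$ from Theorem 1.3 (i) to show that a single term dominates once $\ga(0)$ is sufficiently close to $\pa D$. Since $\psi$ is real, $(\psi\circ\ga)'(t) = 2\Re\big(\pa\psi(\ga)\ga'\big)$. Differentiating once more (using $\tfrac{d}{dt}\pa\psi(\ga) = \pa^2\psi(\ga)\ga' + \pa\ov\pa\psi(\ga)\ov{\ga'}$) and invoking the geodesic equation, a direct calculation gives
\begin{equation*}
(\psi\circ\ga)''(t) = 2\Re\Big(\big(\pa^2\psi + \pa\psi\cdot\pa\La_D\big)\big(\ga'\big)^2\Big) + 2\,\pa\ov\pa\psi\,\big\vert\ga'\big\vert^2,
\end{equation*}
with all functions evaluated at $\ga(t)$.

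The hypothesis $(\psi\circ\ga)'(0)=0$ forces $\pa\psi(\ga(0))\ga'(0) = i\tau$ for some real $\tau$, and since $\ga$ is non-constant while $\pa\psi\ne 0$ in a neighbourhood of $\pa D$, one has $\tau\ne 0$. Consequently $\big(\ga'(0)\big)^2 = -\tau^2/(\pa\psi)^2$ and $\vert\ga'(0)\vert^2 = \tau^2/\vert\pa\psi\vert^2$ at $\ga(0)$, and the previous identity collapses to
\begin{equation*}
(\psi\circ\ga)''(0) = 2\tau^2\left[-\Re\!\left(\frac{\pa^2\psi}{(\pa\psi)^2}\right) - \Re\!\left(\frac{\pa\La_D}{\pa\psi}\right) + \frac{\pa\ov\pa\psi}{\vert\pa\psi\vert^2}\right]\bigg\vert_{\ga(0)}.
\end{equation*}

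Applying Theorem 1.3 (i) with $(\al,\be)=(1,0)$ yields $\pa\La_D(p)\big(-\psi(p)\big)\to -\pa\psi(p_0)$ as $p\to p_0\in\pa D$. Combined with the continuity and non-vanishing of $\pa\psi$ near $\pa D$, this gives
\begin{equation*}
-\Re\!\left(\frac{\pa\La_D(p)}{\pa\psi(p)}\right) = \frac{1+o(1)}{-\psi(p)} \quad \text{as } p\to\pa D,
\end{equation*}
a positive quantity that blows up at $\pa D$, while the terms $\pa^2\psi/(\pa\psi)^2$ and $\pa\ov\pa\psi/\vert\pa\psi\vert^2$ are uniformly bounded on a neighbourhood of $\pa D$ since $\psi$ is smooth and $\pa\psi\ne 0$ there. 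By compactness of $\pa D$, one can fix $\ep>0$ small enough that the $\pa\La_D$-term dominates throughout the tubular neighbourhood $\{-\ep<\psi<0\}$, so that the bracketed expression is strictly positive on this set. Since $\tau^2>0$, this gives $(\psi\circ\ga)''(0)>0$.

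The main obstacle is the bookkeeping of signs and the need to exploit precisely the first-order (not merely zeroth-order) asymptotics of the Robin function; Theorem 1.3 (i) is tailor-made for this, since it is exactly the divergence rate of $\pa\La_D$ near the boundary that beats the bounded geometric contributions coming from $\psi$ alone.
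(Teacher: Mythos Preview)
Your proof is correct and uses the same ingredients as the paper: the second-derivative identity $(\psi\circ\ga)'' = 2\Re\big((\pa^2\psi+\pa\psi\,\pa\La_D)(\ga')^2\big)+2\pa\ov\pa\psi\,|\ga'|^2$ combined with the first-order asymptotic $\pa\La_D(p)\big(-\psi(p)\big)\to -\pa\psi(p_0)$ from Theorem~1.3(i). The only difference is structural: the paper argues by contradiction (extracting a sequence, normalising by rotation/translation, and passing to the limit to force the limiting tangent vector to vanish), whereas you compute the bracket directly and show it is positive once $-\psi$ is small; your route is a bit more transparent since it avoids the sequence extraction and normalisation, while the paper's version makes the role of the unit tangent vector slightly more explicit.
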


Suppose that $z_0 \in D$ is such that no closed geodesic passes through it and let $\psi$ be a smooth defining function for $\pa D$. Take this $\ep > 0$ and let $\ep_1 = \min\{\ep, -\psi(z_0) \}$. Then 
\[ 
K = \big\{ z \in D : \psi(z) \leq -\ep_1\big\}
\]
is the compact set that works. Indeed, let $\gamma :[t_1. t_2] \to D$ be a geodesic loop based at $z_0$ and suppose that it does not lie in $K$. Then $\gamma$ enters the $\ep_1$-band around $\pa D$ and being a loop, it must turn back and hence $\psi \circ \gamma$ must have a maximum somewhere, say at $t_0 \in (t_1, t_2)$. This implies that $(\psi \circ \gamma)(t_0) > -\ep, (\psi \circ \gamma)^{\prime}(t_0) = 0$ and $(\psi \circ \gamma)^{\prime \prime}(t_0) < 0$. But this contradicts the above proposition.

\begin{proof}
If possible, assume that this is not true. Then there exists a sequence of geodesics $c_{\nu}$ such that:
\begin{itemize}
\item [(i)] $a_{\nu}=c_{\nu}(0)$ converges to a point $a_0 \in \pa D$,
\item [(ii)] $(\psi \circ c_{\nu})^{\prime}(0)=0$
\item [(ii)] $(\psi \circ c_{\nu})^{\prime\prime}(0)\leq 0$.
\end{itemize}
Without loss of generality let $a_0=0$ and $\nabla\psi(0)=2(\pa \psi/\pa \ov z)(0)=1$.
For sufficiently large $\nu$, the distance between $a_{\nu}$ and $\pa D$, say $\de_{\nu}$, is realised by a unique point $\pi(a_{\nu}) \in \pa D$. Apply translation and rotations to $D$ to obtain $D_{\nu}$ with defining functions $\psi_{\nu}$ such that
\begin{itemize}
\item $\pi(a_{\nu})$ corresponds to $0$ and $\pa \psi_{\nu}(0)=1$
\item The geodesic $c_{\nu}$ corresponds to $\ga_{\nu}$ that has the following properties:
\begin{itemize}
\item [(a)] $p_{\nu}=\ga_{\nu}(0)=-\de_{\nu}$
\item [(b)] $(\psi_{\nu} \circ \ga_{\nu})^{\prime}(0)=0$
\item [(c)] $(\psi_{\nu}\circ \ga_{\nu})^{\prime\prime}(0)\leq 0$
\end{itemize}
\end{itemize}
Note that
\[
(\psi \circ \ga)^{\prime\prime} = 2\Re \left(\pa \psi \ga^{\prime\prime}\right) + 2 \Re\left( \pa\pa\psi{\ga^{\prime}}^2\right)+2\pa\ov\pa\psi \vert \ga^{\prime} \vert^2,
\]
and
\[
\ga^{\prime\prime}=\pa \La_D {\ga^{\prime}}^2,
\]
so that by $(c)$,
\[
2\Re \big(\pa \psi_{\nu}(p_{\nu}) \pa \La_{\nu}(p_{\nu}) {\ga_{\nu}^{\prime}}(0)^2\big) + 2 \Re\big(\pa\pa\psi_{\nu}(p_{\nu}){\ga_{\nu}^{\prime}}(0)^2\big)+2\pa\ov\pa\psi_{\nu}(p_{\nu}) \big\vert \ga_{\nu}^{\prime}(0) \big\vert^2 \leq 0.
\]
Dividing throughout by $\vert \ga_{\nu}^{\prime}(0)\vert^2$, we may assume that $\ga_{\nu}^{\prime}(0)$ is a unit vector. Let the limit (after passing to a subsequence if necessary ) of $\ga_{\nu}^{\prime}(0)$ be the unit vector $v$.
Multiplying by $-\psi_{\nu}(p_{\nu})$ (which is positive) and taking limit $\nu \to \infty$, we get
\[
\lim_{\nu \to \infty}2\Re \Big(\pa \psi_{\nu}(p_{\nu})\pa \La_{\nu}(p_{\nu}) \big(-\psi_{\nu}(p_{\nu})\big) {\ga_{\nu}^{\prime}}(0)^2\Big)  \leq 0,
\]
as the last two terms go to $0$. This, after using Theorme 1.3 (ii), implies that
\[
2\Re\Big(\pa \psi(0)\big(-\pa\psi(0)\big)v^2\Big) \leq 0
\]
i.e.,
\begin{equation}\label{v-1}
\Re v^2 \geq 0.
\end{equation}
On the other hand (b) implies that
\[
2 \Re \big(\pa \psi_{\nu} (p_{\nu})\ga_{\nu}^{\prime}(0)\big)=0.
\]
Letting $\nu \to \infty$,
\[
2 \Re\big(\pa \psi(0) v \big)=0,
\]
which gives
\begin{equation}\label{v-2}
\Re v =0.
\end{equation}
Now (3.11) and (3.12) imply that $v=0$ which contradicts the fact that $v$ is a unit vector and this proves the proposition.
\end{proof}


For the proof of Corollary 1.6, note that the affine connection corresponding to
the capacity metric is given by
\[
\nabla \frac{\pa}{\pa z}=\pa \log c_D (z)\,dz \otimes
\frac{\pa}{\pa z} = -\pa \La_D(z)\,dz \otimes
\frac{\pa}{\pa z}.
\]
Therefore (see for example the discussion in \cite{GS}) the euclidean curvature 
of a geodesic $z(s)$ parametrized by euclidean arc-length $s$ with unit tangent vector 
$z^{\prime}(s)$ is
\begin{equation}\label{curv-geod}
\kappa\big(z(s)\big)=\Im \Big(\pa \La_D \big(z(s)\big)z^{\prime}(s)\Big).
\end{equation}
By Theorem 1.3 (i) it can be seen that
\[
\bigg\vert \kappa\big(z(s)\big)\Big(-\psi\big(z(s)\big)\Big) \bigg\vert \approx 1
\]
which completes the proof.


\subsection{Proof of Theorem 1.7} 

First consider the case $k=p+q \neq 1$. Since $\mathcal{H}^0_2(D)\cong \mathcal{H}^2_2(D)$, it is enough to show that there is no nonzero square integrable harmonic function on $D$ with respect to $ds =  c_D \vert dz \vert$. But since $ds$ is complete and K\"{a}her, any such function is constant, see for instance \cite{Yau}. Moreover, since $D$ has infinite volume with respect to $ds$:
\[
\int_{D} \frac{i}{2} c^2_D(z) \, dz \wedge d\ov z \gtrsim \int_{D} \frac{1}{d(z,\pa D)^2} = \infty,
\]
such a function must be $0$.

For the case $k=p+q=1$, we will prove that
\begin{equation}
ds^2 \approx (-\psi)^{-1} \vert dz\vert^2 + (-\psi)^{-2}\vert \pa \psi\vert^2 \vert dz\vert^2,
\end{equation}
uniformly near $\pa D$ where $\psi$ is a smooth defining function for $D$. The infinite dimensionality of $\mathcal{H}^{p,q}_2(D)$ will then follow from \cite{Oh}. Let us denote the right hand side of  (3.14) by $dt^2$.  If $z_0 \in \pa D$ and $v$ is any nonzero complex vector, then using Theorem 1.3, 
\begin{align*}
\lim_{z \to z_0} \frac{ds^2_z(v,v)}{dt^2_z(v,v)} & = \lim_{z \to z_0} \frac{c_{D}(z)^2}{\big(-\psi(z)\big)^{-1} + \big(-\psi(z)\big)^{-2} \big\vert \pa \psi(z)\big\vert^2} \\
&= \lim_{z\to z_0} \frac{\big(-\psi(z)\big)^2 c_{D}(z)^2}{ -\psi(z) + \big\vert \pa \psi (z) \big\vert^2 }\\
& = \frac{\vert \pa \psi (z_0) \vert^2}{\vert \pa \psi (z_0) \vert^2 }\\
&=1.
\end{align*}
Therefore, the ratio
$ds^2_z/dt^2_z$
is uniformly bounded above and below by positive constants near $z_0$ and hence near $\pa D$ by compactness and this completes the proof.


\section{Critical points of the Green's function : Proof of Theorem 1.8}

\noindent We will need uniform estimates for the Green's functions, away from the diagonal, of a varying family of domains.

\begin{prop}
Let $D \subset \mathbb C$ be a smoothly bounded domain and $D_{k} \subset \mathbb C$
a sequence of smoothly bounded domains converging to it in the 
$C^{\infty}$-topology. Fix $z_0 \in D$ and let $B = B(z_0,r)$ be a small disc around it such that
$\ov B \subset D_k$ for large $k$. Let $z_k \in B$ converge to $z_0$. Then for every $\epsilon > 0$ 
there exists a constant $C > 0$ such that
\[
\Vert \partial^{\alpha + \beta} G_k(z_k,\cdot) \Vert_{C^2(D_k \setminus B)} \leq C \; \Vert \partial^{\alpha + \beta}G(z_0, \cdot) \Vert_{C^2(D \setminus B)} + \epsilon,
\]
for $k$  large, where $G_k, G$ are the Green's functions for $D_k, D$ respectively and the derivatives $\partial^{\alpha + \beta}$ are taken with respect to the first variable. In particular, if $(z_k, \zeta_k) \in D_k\times D_k$ converges to $(z_0, \zeta_0) \in D_0\times \pa D_0$, then,
\[
\lim_{k \rightarrow \infty} \partial^{\alpha + \beta} G_k(z_k,\zeta_k) = \partial^{\alpha + \beta}G(z_0, \zeta_0).
\]

\end{prop}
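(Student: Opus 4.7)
The plan is to combine boundary Schauder estimates with a Proposition \ref{conv-G_j}-type convergence of Green's functions. Set
\[
u_k(\z) := \pa^{\al + \be} G_k(z, \z)\big|_{z = z_k}, \quad u(\z) := \pa^{\al + \be} G(z, \z)\big|_{z = z_0}.
\]
By joint smoothness of $G_k$ on $D_k \times D_k$ off the diagonal, each $u_k$ is harmonic in $\z$ on $D_k \setminus \{z_k\}$. Moreover, $u_k$ extends continuously to $\pa D_k$ with boundary value $0$: for any $\z_* \in \pa D_k$, the map $z \mapsto G_k(z, \z_*)$ is identically zero on $D_k$ as the boundary limit of the Green's function in its second argument, hence all its $z$-derivatives vanish, in particular at $z_k$. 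The analogous statement holds for $u$ on $\pa D$, so in particular $u(\z_0) = 0$.

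Since $z_k \in B$, the function $u_k$ is harmonic on $D_k \setminus \ov B$, vanishes on $\pa D_k$, and has data $u_k|_{\pa B}$ on $\pa B$. Boundary Schauder estimates for this mixed Dirichlet problem yield
\[
\| u_k \|_{C^2(D_k \setminus B)} \le C_k \, \| u_k \|_{C^2(\pa B)},
\]
with $C_k$ depending on the $C^{2, \al}$-geometry of $\pa D_k$ and $\pa B$. Since $D_k \to D$ in the $C^{\infty}$-topology, the constants $C_k$ can be taken bounded by a single $C$ independent of $k$.

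Next I would show $u_k \to u$ in $C^2(\pa B)$. Arguing exactly as in Proposition \ref{conv-G_j}, one extracts a normal limit of the regular parts $H_k(z, \z) = G_k(z, \z) + \log \vert z - \z \vert$, which are jointly harmonic and uniformly bounded on compact subsets of $D \times D$; the limit is forced to equal $H_D$, giving $G_k \to G$ in the $C^{\infty}$-topology on compact subsets of $D \times D$ off the diagonal. Since $\pa B$ is a fixed compact set bounded away from $z_k \to z_0$, this gives $\| u_k - u \|_{C^2(\pa B)} \to 0$. Combined with $\| u \|_{C^2(\pa B)} \le \| u \|_{C^2(D \setminus B)}$, the Schauder bound becomes
\[
\| u_k \|_{C^2(D_k \setminus B)} \le C \| u \|_{C^2(D \setminus B)} + \ep
\]
for all $k$ large, which is the first assertion.

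For the pointwise limit, the uniform $C^{2, \al}$-bound on $\{u_k\}$ combined with $C^{\infty}$-close diffeomorphisms $D \to D_k$ (from the convergence of boundaries) allows one to invoke Arzel\`a--Ascoli: a subsequence $u_k$ converges in $C^2$ up to the boundary on a common model domain, and the already-established pointwise convergence on $D \setminus \ov B$ forces the limit to equal $u$; the full sequence converges by uniqueness. Hence $u_k(\z_k) \to u(\z_0)$ for any $\z_k \to \z_0 \in \pa D$. The main obstacle is securing uniformity in $k$ of the Schauder constants and of the $H_k \to H_D$ convergence; both come from the $C^{\infty}$-convergence of the boundaries $\pa D_k \to \pa D$, which supplies uniform $C^{2, \al}$-regularity of the $\pa D_k$ and uniform control on the Robin functions needed to bound the $H_k$ on compact subsets of $D \times D$.
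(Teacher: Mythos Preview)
Your argument is correct and follows essentially the same route as the paper: harmonicity of $u_k$ in the second variable, vanishing on $\pa D_k$ via symmetry of the Green's function, Schauder estimates on $D_k \setminus B$ with constants uniform in $k$, and interior convergence $G_k \to G$ on compacta to control the data on $\pa B$. The only notable difference is in deducing the pointwise boundary limit: the paper uses a direct $\de$-argument along inward normals (pick $p_k$ at distance $\de$ from $\z_k$, split into four terms, and use the $C^2$ bound plus interior convergence), whereas you invoke Arzel\`a--Ascoli together with diffeomorphisms identifying $D_k$ with $D$. Both work; the paper's normal-point argument is slightly more elementary and avoids setting up the diffeomorphisms.
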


\begin{proof}
Let $h_k(\zeta) = \partial^{\alpha + \beta}G_k(z_k, \zeta)$ and
$h(\zeta) = \partial^{\alpha + \beta} G(z_0, \zeta)$ and define
$\phi_k(\zeta) = \eta(\zeta)h_k(\zeta)$ and $\phi(\zeta) = \eta(\zeta) h(\zeta)$ where $\eta$ is compactly supported near $\partial B$ and $\eta \equiv 1$ on $\partial B$. 
The Schauder estimates give
\[
\Vert h_k \Vert_{C^2(D_k \setminus B)} \le C_k \left(\Vert h_k \Vert_{C^0(D_k \setminus B)} + \Vert \phi_k \Vert_{C^2(D_k \setminus B)} \right).
\]
By Proposition 2.2, it follows that $G_k(z_k, \zeta) \rightarrow G(z_0, \zeta)$ uniformly, if $\zeta$ varies over compact sets that do not intersect $B$. Since $\eta$ is compactly supported, we have 
\[
\Vert \phi_k \Vert_{C^2(D_k \setminus B)} \le \Vert \phi \Vert_{C^2(D \setminus B)} + \epsilon \le C \; \Vert h \Vert_{C^2(D\setminus B)} + \epsilon,
\]
for all $k$ large. 

To estimate $\Vert h_k \Vert_{C^0(D_k \setminus B)}$, note that $G_k(z, \zeta) \equiv 0$ as a function of $z$ for each $\zeta \in \partial D_k$. Hence $h_k (\zeta) = \partial^{\alpha + \beta}G_k(z_k, \zeta) = 0$ for all $\zeta \in \partial D_k$. Since $h_k$ is harmonic in $D_k$, the maximum principle shows that $\Vert h_k \Vert_{C^0(D_k \setminus B)}$ is dominated by its $C^0$-norm on $\partial B$. But then by appealing to Proposition 2.2 again, 
\[
\Vert h_k \Vert_{C^0(D_k \setminus B)} \le C \Vert h \Vert_{C^0(D \setminus B)} + \epsilon < C \Vert h \Vert_{C^2(D \setminus B)} + \epsilon.
\]
It remains to note that the constant $C_k$ in the Schauder estimate above depends on the $D_k$'s, which vary smoothly. Hence the $C_k$'s are uniformly bounded.

\medskip

Fix $\delta >0$. Let $p_k \in D_k$ be such that $d(p_k, \partial D_k) = \delta$
along an inward pointing normal to $D_k$ which passes through $\zeta_k$. Then $p = \lim_{k\rightarrow \infty} p_k$ is at a distance $\delta$ on the normal to $\pa D$ at $\zeta_0$. Observe that
\begin{multline*}
\abs{\pa^{\alpha + \beta} G_k(z_k, \zeta_k)  -
	\pa^{\alpha + \beta} G(z_0, \zeta_0)}  \leq
	\abs{\pa^{\alpha + \beta} G_k(z_k, \zeta_k)  -
	\pa^{\alpha + \beta} G_k(z_k, p_k)} \\+
	\abs{\pa^{\alpha + \beta} G_k(z_k, p_k)  -
	\pa^{\alpha + \beta} G_k(z_k, p)}  
	+ \abs{\pa^{\alpha + \beta} G_k(z_k,p)  -
	\pa^{\alpha + \beta} G(z_0, p)}\\ +
	\abs{\pa^{\alpha + \beta} G(z_0,p)  -
	\pa^{\alpha + \beta} G(z_0, \zeta_0)}.
\end{multline*}
The two middle terms can be made as small as possible for large
$k$ as the Green's functions converge uniformly by Proposition 2.2. The previous bound on the derivatives of $G_k$ near $\pa D_k$ implies that the first and last terms are dominated by a harmless constant times $\abs{\zeta_k  - p_k} = \delta$. The result follows since $\delta$ is arbitrary.
\end{proof}

\noindent {\it Proof of Theorem 1.8}: Let $a \in \pa D$, $a \not=\zeta_0$ be such that
$\partial G(a, \zeta) \neq 0$
for all $\zeta \in D$. Let $a_{k}\in D_{k}$
converge to $a$. For large $k$, 
$\partial G_k(a_k, \zeta) \neq 0$
for every $\zeta \in D_{k}$. Suppose there exists a sequence 
$(z_k,\zeta_k) \in D_k \times D_k$ converging to $(z_{0}, \zeta_{0})$
such that $\partial G_{k}(z_{k},\zeta_{k}) = 0$. We first prove that $K(z_0,\zeta_0) = 0$.

\medskip

\noindent Following \cite{GS}, consider
\[
F_k(z,\zeta) = \frac{\partial G_k(z,\zeta)}{\partial G_k(a,\zeta)}, \quad z,\zeta \in D_k.
\]
Consider the point
$\eta_k \in \partial D_k$ such that $d(\zeta_k, \pa D_k) = \abs{\zeta_k -\eta_k}$. Observe that
$\pa G_k(z_k,\eta) = 0$ for $\eta \in \pa D_k$. Differentiating this by the chain rule gives
\begin{align*}
\frac{\pa^2 G_k}{\pa z \pa \zeta}(z_k,\eta_k)T(\eta_k) +
\frac{\pa^2 G_k}{\pa z \pa \ov \zeta}(z_k,\eta_k)\ov{T(\eta_k)} = 0,
\end{align*}
where $T(\eta_k)$ is the unit tangent vector to $\pa D_k$ at $\eta_k$.  Now consider the Taylor series expansion of
$\pa G_k(z_k,\zeta)$ around $\eta_k$:

\begin{align*}
\pa G_k(z_k,\zeta) &=
	\frac{\pa^2 G_k}{\pa z \pa \zeta}(z_k,\eta_k)(\zeta - \eta_k) +
		\frac{\pa^2 G_k}{\pa z \pa \ov \zeta}(z_k,\eta_k)
		(\ov\zeta - \ov\eta_k) + O\big(\abs{\zeta  - \eta_k}^2\big).
\end{align*}

Substituting for
$\frac{\pa^2 G_k}{\pa z \pa \zeta}(z_k,\eta_k)$ from above, we get
\begin{align*}
\pa G_k (z_k,\zeta_k)
	&=\frac{\pa^2 G_k}{\pa z \pa \ov \zeta}(z_k,\eta_k)
			\left(T(\eta_k) (\ov\zeta_k - \ov\eta_k)- \ov{T(\eta_k)}
				(\zeta_k - \eta_k) \right)\ov{T(\eta_k)} +
				O(\abs{\zeta_k - \eta_k}^2) \\
	&= 2i\frac{\pa^2 G_k}{\pa z \pa \ov \zeta}(z_k,\eta_k)
				\ov{T(\eta_k)}\Im\big((\ov\zeta_k - \ov\eta_k)
					T(\eta_k)\big) + O\big(\abs{\zeta_k - \eta_k}^2\big) \\
	&= 2i\frac{\pa^2 G_k}{\pa z \pa \ov \zeta}(z_k,\eta_k)
				\ov{T(\eta_k)}\abs{\zeta_k-\eta_k} +
				O\big(\abs{\zeta_k - \eta_k}^2\big).
\end{align*}
With these observations, 
\begin{align*}
F_k(z_k,\zeta_k)
	&= \frac{\partial G_k(z_k,\zeta_k)}{\partial G_k(a,\zeta_k)}, \quad z,\zeta \in D_k \\
	&= \frac{2i\frac{\pa^2 G_k}{\pa z \pa \ov \zeta}(z_k,\eta_k)
			\ov{T(\eta_k)}\abs{\zeta_k-\eta_k} +
				{O}\big(\abs{\zeta_k - \eta_k}^2\big)}
			{2i\frac{\pa^2 G_k}{\pa z \pa \ov \zeta}(a,\eta_k)
			\ov{T(\eta_k)}\abs{\zeta_k-\eta_k} +
				{O}\big(\abs{\zeta_k - \eta_k}^2\big)} \\
	&= \frac{\frac{\pa^2 G_k}{\pa z \pa \ov \zeta}(z_k,\eta_k) +
				{O}\big(\abs{\zeta_k - \eta_k}\big)}
			{\frac{\pa^2 G_k}{\pa z \pa \ov \zeta}(a,\eta_k) +
				{O}\big(\abs{\zeta_k - \eta_k}\big)}.
\end{align*}
By Proposition 4.1, the above term converges to
\begin{align*}
\frac{\frac{\pa^2 G}{\pa z \pa \ov \zeta}(z_0,\zeta_0)}
{\frac{\pa^2 G}{\pa z \pa \ov \zeta}(z_0,\zeta_0)} = \frac{K(z_0,\zeta_0)}{K(a,\zeta_0)},
\end{align*}
as $k \rightarrow \infty$. The function $F_k(z_k,\zeta_k) = 0$ precisely when $\pa G_k(z_k,\zeta_k) = 0$.
Thus if $(z_k,\zeta_k)$ converges to $(z_0,\zeta_0)$ and $(z_k,\zeta_k)$ are
critical points of $G_k$, then $K(z_0,\zeta_0) = 0$. This proves one part of
the result.

\medskip

For the converse, let $\zeta_k \in D_k$ converge $\zeta_0 \in \pa D$. Fix a disc
$B = B(z_0,r)$ that is compactly contained in $D$ (and hence $D_k$ for large $k$), such that
$F(z, \zeta_0) = K(z,\zeta_0)/K(a,\zeta_0)$ has an isolated zero in $\ov B$.
This is possible since $F(z,\zeta_0)$ is holomorphic. Now observe that $F_k(z,\zeta_k)$ is uniformly bounded on $B$. This is
because $\big\vert F_k(z,\zeta_k)\big\vert_{\pa B} < M_k$ and if $\sup_k M_k$ is not
bounded, then there exists an increasing subsequence $\{k_m\}$ and
$w_{k_m} \in \pa B$ such that
$\big\vert F_{k_m}(w_{k_m},\zeta_{k_m})\big\vert \rightarrow \infty$ as
$m \rightarrow \infty$. But by Proposition 4.1,
$F_{k_m}(w_{k_m},\zeta_{k_m}) \rightarrow F(w,\zeta_0)$ which is a contradiction.
Since $F_k(z,\zeta_k)$ converges pointwise to $F(z,\zeta_0)$ in $\ov B$ and they are a family of uniformly bounded holomorphic functions, the
convergence is uniform on compact sets of $B$. Hence by Hurwitz's theorem,
there exists a subsequence $\{k_m\}$ such that $F_{k_m}(z_{k_m}, \zeta_{k_m}) = 0$ which shows that $(z_{k_m}, \zeta_{k_m})$ are critical points for $G_k$, and this completes the proof of the theorem.

\begin{bibdiv}
\begin{biblist}

\bib{Ab}{article}{
   author={Aboudi, Nabil},
   title={Geodesics for the capacity metric in doubly connected domains},
   journal={Complex Var. Theory Appl.},
   volume={50},
   date={2005},
   number={1},
   pages={7--22},
   issn={0278-1077},
   review={\MR{2114349}},
   doi={10.1080/02781070412331327892},
}
\bib{Bl}{article}{
   author={B{\l}ocki, Zbigniew},
   title={Suita conjecture and the Ohsawa-Takegoshi extension theorem},
   journal={Invent. Math.},
   volume={193},
   date={2013},
   number={1},
   pages={149--158},
   issn={0020-9910},
   review={\MR{3069114}},
   doi={10.1007/s00222-012-0423-2},
}

\bib{D}{article}{
   author={Donnelly, Harold},
   title={$L_2$ cohomology of pseudoconvex domains with complete K\"ahler
   metric},
   journal={Michigan Math. J.},
   volume={41},
   date={1994},
   number={3},
   pages={433--442},
   issn={0026-2285},
   review={\MR{1297700}},
   doi={10.1307/mmj/1029005071},
}
\bib{DF}{article}{
   author={Donnelly, Harold},
   author={Fefferman, Charles},
   title={$L^{2}$-cohomology and index theorem for the Bergman metric},
   journal={Ann. of Math. (2)},
   volume={118},
   date={1983},
   number={3},
   pages={593--618},
   issn={0003-486X},
   review={\MR{727705}},
   doi={10.2307/2006983},
}

\bib{Fe}{article}{
   author={Fefferman, Charles},
   title={The Bergman kernel and biholomorphic mappings of pseudoconvex
   domains},
   journal={Invent. Math.},
   volume={26},
   date={1974},
   pages={1--65},
   issn={0020-9910},
   review={\MR{0350069 (50 \#2562)}},
}

\bib{GS}{article}{
   author={Gustafsson, Bj{\"o}rn},
   author={Sebbar, Ahmed},
   title={Critical points of Green's function and geometric function theory},
   journal={Indiana Univ. Math. J.},
   volume={61},
   date={2012},
   number={3},
   pages={939--1017},
   issn={0022-2518},
   review={\MR{3071691}},
   doi={10.1512/iumj.2012.61.4621},
}

\bib{He}{article}{
   author={Herbort, Gregor},
   title={On the geodesics of the Bergman metric},
   journal={Math. Ann.},
   volume={264},
   date={1983},
   number={1},
   pages={39--51},
   issn={0025-5831},
   review={\MR{709860}},
   doi={10.1007/BF01458049},
}

\bib{H}{book}{
   author={Herv{\'e}, Michel},
   title={Analytic and plurisubharmonic functions in finite and infinite
   dimensional spaces},
   series={Lecture Notes in Mathematics, Vol. 198},
   note={Course given at the University of Maryland, College Park, Md.,
   Spring 1970},
   publisher={Springer-Verlag, Berlin-New York},
   date={1971},
   pages={vi+90},
   review={\MR{0466602}},
}

\bib{L}{article}{
   author={Lelong, Pierre},
   title={Fonctions plurisousharmoniques et fonctions analytiques de
   variables r\'eelles},
   language={French},
   journal={Ann. Inst. Fourier (Grenoble)},
   volume={11},
   date={1961},
   pages={515--562},
   issn={0373-0956},
   review={\MR{0142789}},
}

\bib{LY}{article}{
   author={Levenberg, Norman},
   author={Yamaguchi, Hiroshi},
   title={The metric induced by the Robin function},
   journal={Mem. Amer. Math. Soc.},
   volume={92},
   date={1991},
   number={448},
   pages={viii+156},
   issn={0065-9266},
   review={\MR{1061928 (91m:32017)}},
}

\bib{MY}{article}{
   author={Maitani, Fumio},
   author={Yamaguchi, Hiroshi},
   title={Variation of Bergman metrics on Riemann surfaces},
   journal={Math. Ann.},
   volume={330},
   date={2004},
   number={3},
   pages={477--489},
   issn={0025-5831},
   review={\MR{2099190}},
   doi={10.1007/s00208-004-0556-8},
}

\bib{Mc}{article}{
   author={McNeal, Jeffery D.},
   title={$L^2$ harmonic forms on some complete K\"ahler manifolds},
   journal={Math. Ann.},
   volume={323},
   date={2002},
   number={2},
   pages={319--349},
   issn={0025-5831},
   review={\MR{1913045}},
   doi={10.1007/s002080100305},
}

\bib{Mi2}{article}{
   author={Minda, David},
   title={Inequalities for the hyperbolic metric and applications to
   geometric function theory},
   conference={
      title={Complex analysis, I},
      address={College Park, Md.},
      date={1985--86},
   },
   book={
      series={Lecture Notes in Math.},
      volume={1275},
      publisher={Springer, Berlin},
   },
   date={1987},
   pages={235--252},
   review={\MR{922304}},
   doi={10.1007/BFb0078356},
}

\bib{Mi1}{article}{
   author={Minda, David},
   title={The capacity metric on Riemann surfaces},
   journal={Ann. Acad. Sci. Fenn. Ser. A I Math.},
   volume={12},
   date={1987},
   number={1},
   pages={25--32},
   issn={0066-1953},
   review={\MR{877576}},
   doi={10.5186/aasfm.1987.1226},
}

\bib{Oh}{article}{
   author={Ohsawa, Takeo},
   title={On the infinite dimensionality of the middle $L^2$ cohomology
   of complex domains},
   journal={Publ. Res. Inst. Math. Sci.},
   volume={25},
   date={1989},
   number={3},
   pages={499--502},
   issn={0034-5318},
   review={\MR{1018512}},
   doi={10.2977/prims/1195173354},
}

\bib{RT}{article}{
   author={Rodr{\'{\i}}guez, J. M.},
   author={Tour{\'{\i}}s, E.},
   title={Gromov hyperbolicity through decomposition of metric spaces},
   journal={Acta Math. Hungar.},
   volume={103},
   date={2004},
   number={1-2},
   pages={107--138},
   issn={0236-5294},
   review={\MR{2047877}},
   doi={10.1023/B:AMHU.0000028240.16521.9d},
}

\bib{So}{article}{
   author={Solynin, Alexander Yu.},
   title={A note on equilibrium points of Green's function},
   journal={Proc. Amer. Math. Soc.},
   volume={136},
   date={2008},
   number={3},
   pages={1019--1021 (electronic)},
   issn={0002-9939},
   review={\MR{2361876}},
   doi={10.1090/S0002-9939-07-09156-3},
}

\bib{Su}{article}{
   author={Suita, Nobuyuki},
   title={Capacities and kernels on Riemann surfaces},
   journal={Arch. Rational Mech. Anal.},
   volume={46},
   date={1972},
   pages={212--217},
   issn={0003-9527},
   review={\MR{0367181}},
}

\bib{Ya}{article}{
   author={Yamaguchi, Hiroshi},
   title={Sur le mouvement des constantes de Robin},
   language={French},
   journal={J. Math. Kyoto Univ.},
   volume={15},
   date={1975},
   pages={53--71},
   issn={0023-608X},
   review={\MR{0382695}},
}

\bib{Yau}{article}{
   author={Yau, Shing Tung},
   title={Some function-theoretic properties of complete Riemannian manifold
   and their applications to geometry},
   journal={Indiana Univ. Math. J.},
   volume={25},
   date={1976},
   number={7},
   pages={659--670},
   issn={0022-2518},
   review={\MR{0417452}},
}

\end{biblist}
\end{bibdiv}

\end{document}